\newtheorem{rema}{Remark}
\newtheorem{asu}{Assumption}
\newtheorem{claim}{Claim}
\newtheorem{lemma}{Lemma}
\newtheorem{corollary}{Corollary}
\newtheorem{prop}{Proposition}
\newtheorem{thm}{Theorem}
\newtheorem{defin}{Definition}
\def \n{\Vert}
\def \d {\delta}
\def \eps {\varepsilon}
\newcommand{\E}{\mathop{\mathbb{E}}}
\newcommand{\dist}{\mathop{\mathrm{dist}}}
\renewcommand{\P}{\mathop{\mathbb{P}}}
\newcommand{\ord}{\mathop{\mathrm{ord}}}
\newcommand{\supp}{\mathop{\mathrm{supp}}}
\def\R{{\mathbb{R}}}
\def\N{{\mathbb{N}}}
\def\Q{{\mathbb{Q}}}
\def\F{{\cal{F}}}
\def\XX{{\cal X}}
\def\lf{\lfloor}
\def\rf{\rfloor}
\def\|{\,|\,}
\def\bn#1\en{\begin{align*}#1\end{align*}}
\def\bnn#1\enn{\begin{align}#1\end{align}}
\title{Jante's law process}
\author{
Philip Kennerberg\footnote{Center for Mathematical Sciences, Lund University, Sölvegatan 18A, Box 118, 22100 Lund, Sweden. Research is supported by the Swedish Research Council grant VR~2014--5157.
} ${}^{,}$\footnote{Email address: philipk@maths.lth.se}
\, and 
Stanislav Volkov$^{*,}$\footnote{Email address: s.volkov@maths.lth.se
}}
\begin{document} 

\maketitle
\begin{abstract}

Consider the process which starts with $N\ge 3$ distinct points on ${\mathbb R}^d$, and fix a positive integer~$K<N$. Of the total $N$ points keep those $N-K$ which minimize the energy (defined as the sum of all pairwise distances squared) amongst all the possible subsets of size $N-K$, and then replace the removed points by $K$ i.i.d.\ points sampled according to some fixed distribution $\zeta$. Repeat this process ad infinitum. We obtain various quite non-restrictive conditions under which the set of points converges to a certain limit. This is a very substantial generalization of the ``Keynesian beauty contest process" introduced in~\cite{GVW}, where $K=1$ and the distribution $\zeta$ was uniform on the unit cube.

\end{abstract}

\noindent
{\sf Keywords:}
Keynesian beauty contest, rank driven processes, interacting particle systems.

\noindent
{\sf Subject classification:}
60J05, 60D05, 60K35.

\section{Introduction and auxiliary results}\label{Intro}
We study a generalization of the model presented in Grinfeld et al.~\cite{GVW}. Fix an integer $N\ge 3$ and some $d$-dimensional random variable $\zeta$. Now arbitrary choose $N$ distinct points on $\R^d$, $d\ge 1$. The process in~\cite{GVW}, called there ``Keynesian beauty contest process'', is a discrete-time process with the following dynamics: given the configuration of~$N$ points we compute its center of mass~$\mu$ and throw away the most distant from~$\mu$ point; if there is more than one, we choose each one with equal probability. Then this point is replaced with a new point drawn independently each time from the distribution of~$\zeta$. In~\cite{GVW} it was shown that when~$\zeta$ has a uniform distribution on a unit cube, then the configuration converges to some random point on~$\R^d$, with the exception of the most distant point.

The aim of this paper is to remove the assumption on uniformity of~$\zeta$ and obtain some general sufficient conditions under which the similar convergence takes place. Additionally, it turns out we can naturally generalize the process by removing not just one but~$K\ge 2$ points at the same time, and then replacing them with new~$K$ i.i.d.\ points sampled from~$\zeta$. We also give the process we introduce a different name, which we believe describes its essence much better. The ``Law of Jante'' is the concept that there is a pattern of group behaviour towards individuals within Scandinavian countries that criticises individual success and achievement as unworthy and inappropriate, in other words, it is better to be ``like everyone else''. The concept was created by Aksel Sandemose in~\cite{AS}, identified the Law of Jante as ten rules, and has been a very popular concept in Nordic countries since then.

We will use mostly the same notations as in~\cite{GVW}. Namely, let $\XX_n = ( x_1, x_2, \ldots, x_n )$ for a vector of~$n$ points $x_i \in \R^d$; let $\mu_n (\XX_n)= n^{-1} \sum_{i=1}^n x_i$ be the barycentre of~$\XX_n$. Denote by $\ord ( \XX_n ) = ( x_{(1)} , x_{(2)} , \ldots, x_{(n)})$ the barycentric order statistics of $x_1, \ldots, x_n$, so that 
$$ 
\n x_{(1)} - \mu_n (\XX_n ) \n \leq \n x_{(2)} - \mu_n (\XX_n) \n \leq \cdots \leq \n x_{(n)} - \mu_n (\XX_n) \n.
$$
Here and throughout the paper $\n x\n$ denotes the Euclidean norm in~$\R^d$, $x\cdot y$ is a dot product of two vectors~$x,y\in\R^d$, and~$B_{r}(x)=\{y\in\R^d:\ \n y-x\n< r\}$ is an open ball of radius~$r$ centred at~$x$. As in~\cite{GVW} let us also define for $\XX_n = ( x_1, x_2, \ldots, x_n ) \in \R^{dn}$
$$
G_n ( \XX_n ) = G_n (x_1, \ldots, x_n ) = \frac 1n \sum_{i=1}^n \sum_{j=1}^{i-1} \n x_i - x_j \n^2 = \sum_{i=1}^n \n x_i - \mu_n ( \XX_n ) \n^2 
 = \inf_{y \in \R^d} \sum_{i=1}^n \n x_i - y \n^2.
$$
We can think of~$G_n(\XX_n) $ as of a measure of ``diversity'' among individuals with properties $x_1,\dots,x_n$.

In~\cite{GVW}, where~$K=1$, the authors called~$x_{(n)}$ the {\em extreme} point of~$\XX_n$, that is, a point of $x_1, \ldots, x_n$ farthest from the barycentre, and the defined {\em core} of $\XX_n$ as $\XX_n'= (x_{(1)}, \ldots, x_{(n-1)})$, the vector of $x_1, \ldots, x_n$ with (one of) the extreme point removed. They also defined $F_n (\XX_n) = G_{n-1} ( \XX_n')$ and $F(t)=F_N(\XX(t))$.

In our paper, when~$K\ge 1$, we re-define the core as the subset of $x_1,\dots,x_{N}$ containing~$N-K$ elements which minimizes the diversity of the remaining individuals, that is the subset which minimizes
$$
\min_{\{y_1,\dots,y_{N-K}\}\subset\{x_1,\dots,x_N\}}
G_{N-K}(y_1,\dots,y_{N-K}).
$$
We will show below that, in fact, when~$K=1$ both definitions coincide.

The process runs as follows. Let $\XX(t)=\{X_1(t), \ldots, X_N(t)\}$ be distinct points in~$\R^d$. Given~$\XX(t)$, let~$\XX'(t)$ be the core of~$\XX(t)$ and replace $\XX(t)\setminus \XX'(t) $ by~$K$ i.i.d.\ $\zeta$-distributed random variables so that 
$$
\XX (t+1) = \XX'(t) \cup \{\zeta_{t+1;1},\dots,\zeta_{t+1;K}\},
$$
where $\zeta_{t;j}$, $t=1,2,\dots$, $j=1,2,\dots,K$, are i.i.d.\ random variables with a common distribution~$\zeta$. In case there is more than one element in the core, that is, a few configurations which minimize diversity, we chose any of it with equal probability, precisely as it was done in~\cite{GVW}. Now let $F(t)=G_{n-K} ( \XX'(t))$.

Finally, to finish specification of the process, we allow the initial configuration $\XX(0)$ be arbitrary or random, with the only requirement that all the points of $\XX(0)$ must lie in the support of $\zeta$.

The following statement links the case~$K=1$ with the general~$K\ge 1$.
\begin{lemma}
If~$K=1$, then the only point not in the core is the one which is the furthermost from the center of mass of~$\XX$.
\end{lemma}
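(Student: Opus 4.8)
The plan is to obtain an exact formula for the diversity of the configuration that remains after deleting a single point, and then to read off directly that this quantity is minimized precisely by deleting the point farthest from the barycentre.

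First I would fix $\XX=(x_1,\dots,x_N)$, write $\mu:=\mu_N(\XX)$, and for each $k\in\{1,\dots,N\}$ let $\XX^{(k)}$ denote the vector of the $N-1$ points obtained by removing $x_k$. Its barycentre is $\mu^{(k)}=(N\mu-x_k)/(N-1)$, hence $\mu^{(k)}-\mu=(\mu-x_k)/(N-1)$. Applying the Steiner (parallel–axis) identity $\sum_i\n x_i-y\n^2=\sum_i\n x_i-\bar x\n^2+n\,\n\bar x-y\n^2$ to the $N-1$ points of $\XX^{(k)}$, with $\bar x=\mu^{(k)}$ and $y=\mu$, gives
$$
G_{N-1}(\XX^{(k)})=\sum_{i\ne k}\n x_i-\mu^{(k)}\n^2=\sum_{i\ne k}\n x_i-\mu\n^2-(N-1)\,\n\mu^{(k)}-\mu\n^2 .
$$
Now $\sum_{i\ne k}\n x_i-\mu\n^2=G_N(\XX)-\n x_k-\mu\n^2$ and $(N-1)\,\n\mu^{(k)}-\mu\n^2=\n x_k-\mu\n^2/(N-1)$, so
$$
G_{N-1}(\XX^{(k)})=G_N(\XX)-\frac{N}{N-1}\,\n x_k-\mu\n^2 .
$$

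Since $G_N(\XX)$ does not depend on $k$, the right-hand side is minimized exactly when $\n x_k-\mu\n$ is maximal, i.e.\ when $x_k$ is (one of) the extreme point(s) $x_{(N)}$. Hence the core, defined here as the $(N-1)$-subset of minimal diversity, equals $(x_{(1)},\dots,x_{(N-1)})$ — precisely the core used in \cite{GVW} — and the single point not in it is the one farthest from $\mu$. If several points attain the maximal distance, every choice of which one to delete yields the same minimal value of $G_{N-1}$, so the two definitions still specify the same family of admissible cores, consistent with the equiprobable tie-breaking rule stated above.

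I do not expect a genuine obstacle here: the argument is essentially a one-line computation once the Steiner identity is in place. The only point requiring a little care is the passage from ``minimize $G_{N-1}$ over all $(N-1)$-subsets'' to ``maximize the single scalar $\n x_k-\mu\n$'', which is exactly what the displayed formula makes transparent in the case $K=1$; for $K\ge 2$ no such reduction is available, which is why the two notions genuinely diverge there.
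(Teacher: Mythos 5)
Your argument is correct and takes essentially the same route as the paper: both reduce $G_{N-1}$ of the remaining points to an explicit formula of the form $\text{const}-c\,\n x_k-\mu\n^2$ with $c>0$, and then observe that the minimum is attained by removing the point farthest from $\mu$. Your use of the Steiner identity makes the bookkeeping slightly cleaner and in fact yields the correct coefficient $c=\frac{N}{N-1}$, whereas the paper's displayed final line contains a harmless arithmetic slip ($\frac{N}{(N-1)^2}$ instead of $\frac{N}{N-1}$) that does not affect the conclusion.
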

\begin{proof}
Let $\XX=(x_1,\dots,x_N)$. W.l.o.g.~assume $\sum_{i=1}^N x_i= 0\in\R^d$ and thus the center of mass of $\XX$ is located at $0$. Here~$L$ consists of all subsets of $\{1,\dots,N\}$ containing just one element. If we throw away the $l$-th point, denoting $\mu_l=\frac1{N-1}\sum_{i\ne l} x_i=-\frac{x_l}{N-1}$, we get
\begin{align*}
G(l,\XX)&=\sum_{i=1}^{N} \n x_i-\mu_l\n^2 -\n x_l-\mu_l\n^2
=
\sum_{i=1}^{N} \n x_i\n^2 + N \n \mu_l\n^2 
-2\mu_l \cdot \sum_{i=1}^{N} x_i -\n x_l-\mu_l\n^2
\\
&=\sum_{i=1}^{N} \n x_i\n^2 + N \frac{\n x_l\n^2}{(N-1)^2}
-\frac{\n x_l N \n^2}{(N-1)^2}
=-\n x_l\n^2 \frac{ N}{(N-1)^2}+\sum_{i=1}^{N} \n x_i\n^2.
\end{align*}
Therefore, the minimum of~$G(l,\XX)$ is achieved by choosing an~$x_l$ with the largest~$\n x_l\n$, that is, the furthermost from the centre of mass.
\end{proof}
\begin{corollary}
If~$K=1$, Jante's law process coincides with the process studied in~\cite{GVW}.
\end{corollary}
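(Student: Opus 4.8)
The plan is to read the corollary off directly from the preceding Lemma. First I would write the two dynamics side by side. In~\cite{GVW} the transition from $\XX(t)$ to $\XX(t+1)$ is: compute the barycentre $\mu_N(\XX(t))$, delete one point achieving $\max_i\n x_i-\mu_N(\XX(t))\n$ (that is, $x_{(N)}$, chosen uniformly at random among the maximisers if there is more than one), and replace it by a fresh $\zeta$-sample independent of the past. In the present paper with $K=1$, the transition is: delete the single point of $\XX(t)$ lying outside the core $\XX'(t)$ --- chosen uniformly at random among the cores when $G_{N-1}$ has more than one minimiser --- and replace it by a fresh $\zeta$-sample independent of the past.

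Next I would invoke the Lemma: for $K=1$ the point excluded from the core is precisely a point farthest from the centre of mass, since the computation there gives $G(l,\XX)=\sum_{i=1}^N\n x_i\n^2-\frac{N}{(N-1)^2}\n x_l\n^2$, which is minimised exactly when $\n x_l\n$ is maximal. Consequently the collection of admissible cores coincides with the collection of configurations obtained by deleting one extreme point $x_{(N)}$, and the two tie-breaking rules (uniform over minimising $(N-1)$-subsets versus uniform over farthest points) are the same rule. Hence the (possibly randomised) deletion map $\XX(t)\mapsto\XX(t)\setminus\XX'(t)$ used here has the same conditional law as the deletion step of~\cite{GVW}; since the replacement step is literally identical (one i.i.d.\ $\zeta$-sample, independent of everything so far) and the admissibility condition on the starting configuration is the same ($\XX(0)\subset\supp\zeta$), the two processes share the same transition kernel. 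I would also note that the derived statistics agree: with the notation of~\cite{GVW}, $\XX'(t)$ is their core and $F(t)=G_{N-1}(\XX'(t))=F_N(\XX(t))$ matches their $F(t)$. Feeding in the same initial configuration and the same i.i.d.\ sequence $(\zeta_{t;1})_{t\ge1}$, the two trajectories coincide almost surely, so in particular the two processes have the same law.

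There is essentially no obstacle here; the corollary is an immediate consequence of the Lemma. The only point deserving a word of care is that ``more than one element in the core'' in the present setup corresponds exactly to ``more than one extreme point'' in~\cite{GVW} --- equivalently, to a tie in $\max_i\n x_i-\mu_N(\XX)\n$ --- so that the two uniform tie-breaking conventions are genuinely the same; this equivalence of the tie sets is itself contained in the Lemma, whose argument identifies the minimisers of $G(l,\XX)$ with the maximisers of $\n x_l\n$.
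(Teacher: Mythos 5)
Your argument is correct and is precisely what the paper intends: the corollary is stated with no proof because it follows immediately from the preceding Lemma, and you have simply spelled out that the deletion rule, the tie-breaking convention, the replacement step, and the initial-condition requirement all coincide once the Lemma identifies the non-core point with the farthest-from-barycentre point. Nothing is missing and nothing is different in substance from the paper's (implicit) reasoning.
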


The following statement is a trivial consequence of the definition of $F$.
\begin{lemma}\label{lem1}
For any $1\le K \le N-2$ and any distribution of~$\zeta$, we have $F(t+1)\le F(t)$.
\end{lemma}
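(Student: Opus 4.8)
The plan is to read the inequality straight off the definition of $F$ as a minimum. Recall that by construction $F(s)=G_{N-K}(\XX'(s))$, where the core $\XX'(s)$ is a size-$(N-K)$ subset of $\XX(s)$ attaining
$$
\min_{\{y_1,\dots,y_{N-K}\}\subset \XX(s)} G_{N-K}(y_1,\dots,y_{N-K}).
$$
The whole content of the lemma is that in one step of the dynamics the configuration only \emph{gains} points relative to the previous core, so the minimum defining $F$ can only go down.

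Concretely, I would argue as follows. The update rule gives $\XX(t+1)=\XX'(t)\cup\{\zeta_{t+1;1},\dots,\zeta_{t+1;K}\}$, so all $N-K$ points of the old core $\XX'(t)$ are present in $\XX(t+1)$; that is, $\XX'(t)$ is one of the size-$(N-K)$ subsets of $\XX(t+1)$ over which the minimum defining $\XX'(t+1)$ is taken. Hence
$$
F(t+1)=G_{N-K}(\XX'(t+1))=\min_{\{y_1,\dots,y_{N-K}\}\subset \XX(t+1)} G_{N-K}(y_1,\dots,y_{N-K}) \le G_{N-K}(\XX'(t))=F(t).
$$

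There is essentially no obstacle here: the statement follows the moment one notices that the newly sampled points $\zeta_{t+1;j}$ can only help — at worst the new core discards all of them, in which case $\XX'(t+1)=\XX'(t)$ and $F(t+1)=F(t)$. A few minor points are worth recording. The hypothesis $K\le N-2$ is used only to guarantee $N-K\ge 2$, so that $G_{N-K}$ is a genuine diversity functional; the monotonicity argument itself needs nothing beyond $N-K\ge 1$. Moreover, any ambiguity in the choice of core (when several subsets minimize $G_{N-K}$) is irrelevant, since all minimizers share the same value of $G_{N-K}$, so $F(t+1)$ is a deterministic function of $\XX(t+1)$ and the bound is unaffected by the random tie-breaking. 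Finally, since $F(t)\ge 0$ for every $t$, this lemma shows that $(F(t))_{t\ge 0}$ is a bounded non-increasing sequence and hence converges almost surely, which is presumably the form in which it will be used in the sequel.
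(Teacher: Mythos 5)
Your argument is exactly the one the paper has in mind when it calls the lemma ``a trivial consequence of the definition of $F$'': the old core $\XX'(t)$ is one of the size-$(N-K)$ subsets of $\XX(t+1)$, so the minimum defining $F(t+1)$ is at most $G_{N-K}(\XX'(t))=F(t)$. Correct, and the same approach.
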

In case $K=1$ the above statement coincides with Corollary~2.1 in~\cite{GVW}. 

\begin{rema}
It is worth noting that throwing away $\XX^*$ in general does not mean necessary throwing the~$K$ furthest points from the centre of mass of $\XX$, unlike the case $K=1$. For example, let $d=1$, $N=5$ and $K=3$, and set $\XX=(-24,-19,-14,28,29)$. Then the centre of mass is at $\mu=0$ and thus $28$ and $29$ have the largest and the second largest distance from $\mu$, while it is clear that the energy is minimized by keeping exactly these two points in the core and throwing away the rest.
\end{rema}

Finally, define {\em the range} of the configuration: for $n \geq 2$ and $x_1, \ldots, x_n \in \R^d$, write
$$
D_n ( x_1, \ldots, x_n ) = \max_{1 \leq i,j \leq n} \n x_i - x_j \n. 
$$
The following statement is taken from~\cite{GVW}, Lemma 2.2.
\begin{lemma}\label{lem2}
Let $n \geq 2$ and $x_1, \ldots, x_n \in \R^d$. Then
$$
\frac{1}{2} D_n ( x_1, \ldots, x_n )^2 \leq G_n (x_1, \ldots, x_n ) \leq \frac{1}{2} (n-1) D_n ( x_1, \ldots, x_n )^2 .
$$
\end{lemma}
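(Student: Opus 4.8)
The plan is to exploit the two expressions for $G_n$ already recorded in the introduction, namely the pairwise form $G_n(x_1,\dots,x_n)=\frac1n\sum_{i=1}^n\sum_{j=1}^{i-1}\n x_i-x_j\n^2$ and the barycentric form $G_n(x_1,\dots,x_n)=\sum_{i=1}^n\n x_i-\mu_n(\XX_n)\n^2$. The upper bound will come from the pairwise form, the lower bound from the barycentric one together with the triangle inequality.

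For the upper bound: each of the $\binom n2$ summands in $\sum_{i=1}^n\sum_{j=1}^{i-1}\n x_i-x_j\n^2$ is at most $D_n(x_1,\dots,x_n)^2$ by definition of $D_n$, hence $n\,G_n(x_1,\dots,x_n)\le\binom n2 D_n(x_1,\dots,x_n)^2=\frac{n(n-1)}2 D_n(x_1,\dots,x_n)^2$, and dividing by $n$ gives the claim.

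For the lower bound: choose indices $a,b$ with $\n x_a-x_b\n=D_n(x_1,\dots,x_n)$, write $\mu=\mu_n(\XX_n)$, and set $p=\n x_a-\mu\n$, $q=\n x_b-\mu\n$. The triangle inequality gives $p+q\ge\n x_a-x_b\n=D_n(x_1,\dots,x_n)$, and the elementary inequality $2(p^2+q^2)\ge(p+q)^2$ then yields $p^2+q^2\ge\frac12 D_n(x_1,\dots,x_n)^2$. Since all terms of $\sum_{i=1}^n\n x_i-\mu\n^2$ are nonnegative, $G_n(x_1,\dots,x_n)\ge p^2+q^2\ge\frac12 D_n(x_1,\dots,x_n)^2$.

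There is no real obstacle here; the only point worth flagging is that one should use the barycentric representation for the lower bound rather than the pairwise one, since bounding the single diameter term in the pairwise sum only gives the weaker estimate $G_n\ge D_n^2/n$. Everything else is a single application of the triangle inequality and of $2(p^2+q^2)\ge(p+q)^2$.
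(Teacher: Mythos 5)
Your proof is correct. The paper itself does not prove this lemma---it simply cites it as Lemma 2.2 of \cite{GVW}---so there is nothing in the paper to compare against, but your argument (pairwise form for the upper bound; barycentric form plus the triangle inequality and $2(p^2+q^2)\ge(p+q)^2$ for the lower bound) is the standard proof and is complete as written.
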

Let $D(t)=D_{N-K}(\XX'(t))$. From Lemma~\ref{lem2} we have
\begin{align}\label{eqDF}
 \sqrt{\frac 2{N-K-1}\cdot F(t)} \le D(t)\le \sqrt{2 F(t)}.
\end{align}
From Lemmas~\ref{lem1} and~\ref{lem2} it also follows immediately that
\begin{align}\label{eqDFD}
D(t+1)\le \sqrt{2 F(t)} \le D(t) \, \sqrt{N-K-1} .
\end{align}

Let also $\mu'(t)=\mu_{N-K}(\XX'(t))$ be the centre of mass of the core.

\begin{asu}\label{AsuN2K}
$2K<N$.
\end{asu}

Observe that if Assumption~\ref{AsuN2K} is not fulfilled, then all the points of the core can migrate large distances and that $F=0$ does not necessarily imply that the configuration stops moving. For example, one can take $N=4$, $K=2$, and $\zeta\sim Bernoulli(p)$ to see that the core jumps from $0$ to $1$ and back infinitely often a.s.

In the other case, the new core must contain at least one point of the old core, and the following statement shows that if newly sampled points are far from the core, they immediately get rejected.

\begin{lemma}\label{lem3D}
Under Assumption~\ref{AsuN2K}, if all the distances between $K$ newly sampled points and the points of the core are more than $C=D\, \sqrt{N-K-1}$, then $\XX'(t+1)=\XX'(t)$.
\end{lemma}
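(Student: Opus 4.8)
The plan is to argue by contradiction: suppose the new core $\XX'(t+1)$ is not equal to the old core $\XX'(t)$, and derive that this forces one of the freshly sampled points into the new core, which then makes its diversity too large to be the minimizer. First I would use Assumption~\ref{AsuN2K} to observe that the new configuration $\XX(t+1)$ consists of the $N-K$ old core points together with $K$ new points, and since $2K<N$ we have $N-K>K$, so any size-$(N-K)$ subset of $\XX(t+1)$ must contain at least one of the old core points; more to the point, if $\XX'(t+1)\ne\XX'(t)$ then $\XX'(t+1)$ must contain at least one newly sampled point $\zeta_{t+1;j}$, because the only size-$(N-K)$ subset of $\XX(t+1)$ consisting entirely of old core points is $\XX'(t)$ itself.

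The second step is to produce a competitor subset with strictly smaller diversity than any subset containing a new point, namely $\XX'(t)$ itself, whose diversity is $F(t)=G_{N-K}(\XX'(t))$. For the lower bound on the diversity of a subset $S$ that contains some new point $p=\zeta_{t+1;j}$ and at least one old core point $q$, I would apply the first inequality in Lemma~\ref{lem2}: $G_{N-K}(S)\ge \tfrac12 D_{N-K}(S)^2 \ge \tfrac12\n p-q\n^2$, since $p,q\in S$ so $D_{N-K}(S)\ge\n p-q\n$. By hypothesis $\n p-q\n > C = D(t)\sqrt{N-K-1}$, hence $G_{N-K}(S) > \tfrac12 D(t)^2 (N-K-1)$. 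On the other hand, the upper bound in Lemma~\ref{lem2} applied to the old core gives $F(t) = G_{N-K}(\XX'(t)) \le \tfrac12(N-K-1)D(t)^2$ (recall $D(t)=D_{N-K}(\XX'(t))$). Combining these, $G_{N-K}(S) > \tfrac12(N-K-1)D(t)^2 \ge F(t) = G_{N-K}(\XX'(t))$, so $S$ cannot be a minimizer of the diversity, contradicting $S=\XX'(t+1)$ (or, in the tie-breaking convention, contradicting that $S$ is among the minimizing subsets). Therefore $\XX'(t+1)=\XX'(t)$.

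The only point that needs care — and what I would flag as the main obstacle — is the boundary behaviour of the inequalities, specifically whether the chain of inequalities can degenerate to equality throughout. This happens only if $D(t)=0$, i.e.\ the core is a single repeated point; but the process is defined on configurations of distinct points, and even allowing a degenerate core one checks directly that with $D(t)=0$ the condition $\n p-q\n > 0$ still forces $G_{N-K}(S)>0=F(t)$, so the argument goes through. A secondary subtlety is making sure every candidate subset $S\ne\XX'(t)$ of $\XX(t+1)$ genuinely contains a new point and an old point simultaneously: it contains a new point by the first step, and it contains an old point because $S$ has $N-K>K$ elements while only $K$ new points are available. Thus every competitor to $\XX'(t)$ has strictly larger diversity, which completes the proof.
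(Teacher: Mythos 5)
Your proof is correct and uses essentially the same ingredients as the paper's: the counting argument from $N-2K\ge 1$ forces at least one old core point into $\XX'(t+1)$, and the two bounds of Lemma~\ref{lem2} show that any candidate core containing a new point $p$ and an old point $q$ with $\n p-q\n > D\sqrt{N-K-1}$ has diversity strictly exceeding $F(t)$. The paper compresses the second step into the pre-derived inequality $D(t+1)\le D(t)\sqrt{N-K-1}$ from~\eqref{eqDFD}, whereas you unpack it by directly comparing $G_{N-K}(S)$ against $F(t)$ for each competitor $S$; this is the same estimate arranged slightly differently.
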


\begin{proof}
Since $N-2K\ge 1$, the new core~$\XX'(t+1)$ must contain at least one point of the old core~$\XX'(t)$.
By~\eqref{eqDFD} $D(t+1)\le D(t) \sqrt{N-K-1}$ and therefore if one of the new points is in the new core, it should be no further than $D(t) \sqrt{N-K-1}$ from one of the points of the old core.
\end{proof}

Finally, we will use the following notations. For any two sets $A,B\subset \R^d$, let
$$
\dist(A,B)=\inf_{x\in A, y\in B} \n x-y\n.
$$
We will write $\XX'(t) \to\infty$ if $\min\{\n x\n,\ x\in\XX'(t)\} =\dist(\XX'(t),0)\to\infty$, otherwise we will write $\XX'(t) \not\to\infty$. 
We will also write $\XX'(t)\to \phi\in\R^d$ if all the elements of the set of $\XX'(t)$ converge to $\phi$ as $t\to\infty$. 

The rest of the paper is organized as follows. First, in Section~\ref{sec-shrink}, we show that a.s.\ $F(t)\to 0$ or $\XX'(t)$ goes to infinity (Theorem~\ref{t1}). Next, in Section~\ref{sec-core}, we show that under some conditions either all elements of $\XX'(t)$ converge to a point, or $\XX'(t)\to\infty$ (Theorem~\ref{tmulti}). Section~\ref{sec-Kd1} deals with the case $d=1$ and $K=1$, where we obtain, in particular, that $\XX'(t)$ converges a.s.\ to a finite point for many distributions, as well as strengthen Theorem~\ref{tmulti}
(please see Theorems~\ref{t} and~\ref{t2}).

\section{Shrinking}\label{sec-shrink}
Let $\zeta$ be {\it any} random variable on~$\R^d$. As usual, define the support of this random variable as
$$
\supp \zeta=\{A\in\R^d:\ \P(\zeta\in A)>0\}
=\{x\in\R^d:\ \forall \eps>0 \ \P\left(\zeta\in B_\epsilon(x)\right)>0\}
$$
(see e.g.~\cite{Par}). 
It turns out that the following statement, which is probably known, is true.
\begin{prop}\label{prop1}
$\supp \zeta$ is bounded if and only if there exists some function $f:\mathbb{R}^+ \rightarrow \mathbb{R}^+$ such that for any $x \in \supp \zeta$
$$
\P\left(\zeta\in B_\delta(x)\right)\geq f(\delta)
$$
for all $\delta>0$.
\end{prop}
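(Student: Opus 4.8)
The plan is to prove the two directions separately, with the interesting content being the "only if" direction; the "if" direction is a short compactness-type argument.

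\textbf{The ``if'' direction (existence of $f$ $\Rightarrow$ boundedness).} Suppose such an $f$ exists. Fix any $\delta>0$. Pick a maximal $\delta$-separated set of points $x_1,\dots,x_m$ in $\supp\zeta$ (i.e.\ pairwise distances at least $\delta$). The balls $B_{\delta/2}(x_i)$ are pairwise disjoint, and each has $\P(\zeta\in B_{\delta/2}(x_i))\ge f(\delta/2)>0$; since these probabilities sum to at most $1$, we get $m\le 1/f(\delta/2)<\infty$. By maximality, every point of $\supp\zeta$ lies within $\delta$ of some $x_i$, so $\supp\zeta$ is covered by finitely many balls of radius $\delta$, hence is bounded. (One does not even need the uniform bound here — a single $\delta$ suffices — but the displayed hypothesis certainly gives it.)

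\textbf{The ``only if'' direction (boundedness $\Rightarrow$ existence of $f$).} Assume $S:=\supp\zeta$ is bounded, hence $\overline{S}$ is compact. The natural candidate is
$$
f(\delta)=\inf_{x\in S}\P\bigl(\zeta\in B_\delta(x)\bigr),
$$
and the whole issue is to show this infimum is strictly positive for each fixed $\delta>0$. Note that for $x\in S$ the probability $\P(\zeta\in B_\delta(x))$ is already $>0$ by definition of the support, so the only danger is that the infimum over $x$ is $0$, approached along some sequence. Take a sequence $x_k\in S$ with $\P(\zeta\in B_\delta(x_k))\to\inf$. By compactness of $\overline{S}$, pass to a subsequence with $x_k\to x_\infty\in\overline S$. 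I claim $x_\infty\in S$: the support is closed, and every open ball around $x_\infty$ eventually contains some $x_k\in S$, so it has positive $\zeta$-measure, giving $x_\infty\in S$. Now for any $0<\delta'<\delta$, once $k$ is large enough that $\n x_k-x_\infty\n<\delta-\delta'$ we have $B_{\delta'}(x_\infty)\subset B_\delta(x_k)$, hence $\P(\zeta\in B_\delta(x_k))\ge \P(\zeta\in B_{\delta'}(x_\infty))$. Letting $k\to\infty$ gives $\inf \ge \P(\zeta\in B_{\delta'}(x_\infty))>0$, since $x_\infty\in S$ and $\delta'>0$. Thus $f(\delta)>0$, as required.

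\textbf{Main obstacle.} The only subtle point is the lower-semicontinuity-type step: the map $x\mapsto \P(\zeta\in B_\delta(x))$ need not be continuous, so one cannot simply say the infimum of a continuous function on a compact set is attained and positive. The fix, carried out above, is to exploit that open balls are \emph{monotone under small perturbations of the centre} — shrinking the radius slightly absorbs the displacement — which converts the lack of continuity into a clean inequality $B_{\delta'}(x_\infty)\subset B_\delta(x_k)$ and lets the strict positivity at the limit point $x_\infty\in S$ (which itself requires knowing $S$ is closed) propagate to the infimum. No measurability issues arise because we only ever compare probabilities of nested balls.
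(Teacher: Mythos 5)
Your proof is correct and follows essentially the same route as the paper's: a disjoint-ball packing argument for the forward direction, and for the converse, compactness of $\overline{\supp\zeta}$, a convergent subsequence of centres, and the nested-ball inclusion $B_{\delta'}(x_\infty)\subset B_\delta(x_k)$ to show the infimum is positive. One small improvement on your side: you define $f$ directly via the open balls appearing in the statement, whereas the paper's proof takes $f(\delta)=\inf_{x}\P(\n\zeta-x\n\le\delta)$ with a closed ball of the same radius $\delta$, so strictly speaking it needs an extra radius-halving remark to deliver the open-ball bound $\P(\zeta\in B_\delta(x))\ge f(\delta)$ claimed in the proposition.
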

\begin{proof}
Suppose such a function exists, but the support of~$\zeta$ is not bounded. Fix any~$\Delta>0$. Then there must exist a infinite sequence of points $\{x_n\}_{n=1}^{\infty}\subseteq \supp \zeta$, such that $\n x_i-x_{j} \n >2\Delta$, whenever~$i\ne j$. Since the sets $\{B_\Delta(x_n)\}$ are disjoint, this would imply that 
$$
\P\left(\zeta\in \mathbb{R}^d\right)\geq \P\left(\bigcup_{n=1}^{\infty} \left\{\zeta\in B_\Delta(x_n)\right\}\right)\geq \sum_{n=1}^{\infty}f(\Delta)=\infty
$$
which is impossible.

Conversely, assume that $\supp \zeta$ is bounded. For all~$\d>0$ define
$$
f(\delta)=\inf_{x\in\supp \zeta} \P(\n \zeta-x \n\le \delta).
$$
We will show that $f(\d)>0$. Indeed, if not, there exists a sequence~$\{x_n\}$ such that $\P(\n \zeta-x_n \n\le \delta)\to 0$ as~$n\to \infty$. Since the support of $\zeta$ is compact, $\{x_n\}$ must have a convergent subsequence; w.l.o.g.\ we can assume that it is~$\{x_n\}$ itself and thus there is an~$x$ such that $x_n\to x$ and there exists~$N$ such that $\n x_n-x\n<\d/2$ for all $n\ge N$. On the other hand, for these~$n$
$$
\P(\n \zeta-x\n\le \d/2)\le 
\P(\n \zeta-x_n \n\le \d)
$$
by the triangle inequality. Since the RHS converges to zero, this implies $\P(\n \zeta-x\n\le \d/2)=0$ so $x\not\in \supp \zeta$ which contradicts the fact that $x=\lim_{n\to\infty} x_n\in\supp \zeta$ by the definition of the support.
\end{proof}

\begin{thm}\label{t1}
Given any distribution $\zeta$ on $\R^d$, for any $N\ge 3$ and $1\le K\le N-2$ we have 
$$
\P\left(\{F(t)\to 0\}\bigcup \{\XX'(t) \to \infty \}\right)=1.
$$ 
In particular, if $\zeta$ has compact support, then $F(t)\to 0$ a.s.
\end{thm}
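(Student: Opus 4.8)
\textbf{Proof plan for Theorem~\ref{t1}.}
The plan is to show that on the event $\{\XX'(t)\not\to\infty\}$ we must have $F(t)\to 0$ almost surely. Since by Lemma~\ref{lem1} the sequence $F(t)$ is non-increasing and non-negative, it converges a.s.\ to some (random) limit $F_\infty\ge 0$; the whole task is to rule out $F_\infty>0$ on the event that the core does not escape to infinity. I would argue by contradiction: suppose that with positive probability we have both $F(t)\downarrow F_\infty>0$ and $\liminf_t \dist(\XX'(t),0)<\infty$.

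First I would record the geometric consequences of $F_\infty>0$: by~\eqref{eqDF} the range $D(t)=D_{N-K}(\XX'(t))$ is bounded away from $0$ and from $\infty$, namely $\sqrt{2F_\infty/(N-K-1)}\le D(t)\le\sqrt{2F(0)}$ for all large $t$. Combined with $\liminf_t \dist(\XX'(t),0)<\infty$ this means that, along a subsequence of times, the entire core $\XX'(t)$ lies inside a fixed bounded region $R\subset\R^d$ of diameter at most, say, $2\sqrt{2F(0)}$ plus the $\liminf$ bound. The idea is now that whenever the core sits in such a bounded region, there is a uniformly positive probability that the $K$ freshly sampled points land in positions that \emph{strictly} decrease the energy of the best $(N-K)$-subset by a fixed amount, which would force $F(t)$ below $F_\infty$ — a contradiction.

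To make this quantitative I would do the following. Fix the bounded region $R$ and the lower bound $D_{\min}=\sqrt{2F_\infty/(N-K-1)}$ on the range. Since $\supp\zeta$ contains all points of $\XX(0)$ and the core always consists of points that were at some time sampled from $\zeta$ (or belonged to $\XX(0)$), one can locate a point $a\in\supp\zeta$ that is within distance $O(D(t))$ of the current core. The key lemma to establish is: there is $\eps_0>0$ and $p_0>0$, depending only on $N,K,F_\infty,F(0),\zeta$, such that whenever $\XX'(t)\subset R$ with $D(t)\ge D_{\min}$, the event ``all $K$ new points $\zeta_{t+1;j}$ fall within $B_{\eps_0}(a)$ for a suitable $a\in\supp\zeta$ close to the core'' has probability at least $p_0$ (this uses Proposition~\ref{prop1}-type compactness on the relevant bounded region, i.e.\ the function $f$), and on this event the new optimal $(N-K)$-subset has energy at most $F(t)-\eps_0'$ for some fixed $\eps_0'>0$: indeed, replacing the $K$ ``outermost-contributing'' core points (those whose removal would most decrease $G$) by a tight cluster near $a$ cannot increase, and generically strictly decreases, the minimal energy, because the old core had range $\ge D_{\min}>0$ so at least one of its configurations of $N-K$ points spans a definite diameter that the clustered replacement shrinks. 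A second-moment / Borel--Cantelli argument then applies: since along the bad subsequence this favorable event has probability $\ge p_0$ infinitely often and is (conditionally) independent of the past, it occurs infinitely often a.s., each time dropping $F$ by at least $\eps_0'$, so $F(t)\to-\infty$, absurd. Hence $F_\infty=0$ on $\{\XX'(t)\not\to\infty\}$, proving the dichotomy; the last sentence (compact support $\Rightarrow$ $F(t)\to 0$ a.s.) follows because then $\XX'(t)$ can never leave the bounded set $\supp\zeta$, so $\XX'(t)\not\to\infty$ always.

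The main obstacle I expect is the key lemma's energy-drop claim: one must show that clustering $K$ points near a single location $a$ near the core \emph{strictly} and \emph{uniformly} lowers the minimum-energy $(N-K)$-subset value, with a bound $\eps_0'$ that does not degrade as $t\to\infty$. The subtlety is that the optimal subset after resampling might still be forced to use $N-2K\ge 1$ of the old spread-out core points (Assumption~\ref{AsuN2K} guarantees at least one), so the replacement gain has to be estimated against the unavoidable residual spread; one needs to choose the replacement location $a$ and radius $\eps_0$ carefully (e.g.\ near the barycentre of a near-optimal core sub-configuration, exploiting the variational formula $G_n(\XX_n)=\inf_y\sum\n x_i-y\n^2$) and use the strict positivity $D_{\min}>0$ to get a fixed gap. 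Controlling the dependence of $p_0$ and $\eps_0$ on the (random, but bounded along the subsequence) location of the core, via a compactness argument on the closure of the relevant bounded region together with $\supp\zeta$, is the other technical point to handle with care.
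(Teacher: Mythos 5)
Your plan is essentially the same as the paper's proof: argue by contradiction that if $F(t)\downarrow F_\infty>0$ while the core returns to a bounded region infinitely often, then at each such return there is a uniformly positive conditional probability of cutting $F$ by a definite amount, and an iterated conditional-probability (second Borel--Cantelli type) argument then forces $F\to-\infty$, a contradiction. The two ingredients you flag as ``obstacles'' are precisely the paper's Lemma~\ref{lem3} (uniform energy drop) and Proposition~\ref{prop1} (uniform lower bound on ball probabilities on a compact set), so the decomposition is correct.

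The one genuine gap in your sketch is in the energy-drop construction. You propose placing the new point(s) ``near the barycentre of a near-optimal core sub-configuration.'' But the barycentre is generally \emph{not} in $\supp\zeta$ (think of $\zeta$ supported on a sphere), so the probability of sampling a new point near it can be zero, and Proposition~\ref{prop1} would give you nothing. You do separately note that one can find $a\in\supp\zeta$ within $O(D(t))$ of the core, which is the right idea, but you do not reconcile it with the barycentre suggestion. The paper's Lemma~\ref{lem3} resolves this cleanly: take $x_1$ the core point farthest from $\mu'$, pick a core point $x_j$ with $\|x_1-x_j\|\ge D/2$ (which always exists, and is in $\supp\zeta$ since every core point was either in $\XX(0)$ or sampled from $\zeta$), and replace $x_1$ by a new point landing in a small ball around $x_j$. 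A direct computation shows this drops $F$ by a factor $1-\tfrac{1}{2(N-1)^2}$, hence by an amount bounded below purely in terms of $F_\infty$, $N$, $K$; and since $x_j\in\supp\zeta\cap\overline{S_*}$ for a fixed compact $\overline{S_*}$, Proposition~\ref{prop1} gives the uniform probability $\sigma=f(\delta)$. The general $K$ is handled by reducing to $K'=1$ with $N'=N-K+1$, and you should note that this requires only $K\le N-2$, not Assumption~\ref{AsuN2K} (your parenthetical appeal to $N-2K\ge1$ is not needed here). Finally, your Borel--Cantelli step needs the stopping-time formalization ($\tau_{k,r}$ and events $A_k$ in the paper) to make ``conditionally independent of the past'' rigorous, since the times when the core revisits the bounded region are random.
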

Note that $F(t)\to 0$ is equivalent to $D(t)\to 0$.
\begin{proof}
We will first make use of the following lemma.

\begin{lemma}\label{lem3}
Suppose we are given a bounded set $S\in \R^d$ such that $\P(\zeta\in S)>0$ and $N-K$ points $x_1,...,x_{N-K}$ in $(\supp \zeta)\bigcap S$ satisfying $F\left(\left\{x_1,...,x_{N-K}\right\}\right)>\eps_1$. Let $\eps_2= \frac{\eps_1}{2(N-K)^2}$. Then there exists a positive constant $\sigma$, only depending on $\eps_1$, $S$, $K$ and $N$, such that 
$$
\P\left( F\left(\left\{\zeta_1,\dots,\zeta_K,x_1,\dots,x_{N-K}\right\}^{'} \right) <F\left(\left\{x_1,\dots,x_{N-K}\right\}\right) - \eps_2 \right)\geq \sigma.
$$
\end{lemma}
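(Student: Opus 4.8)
The plan is to show that, with probability bounded below by some $\sigma>0$, the $K$ newly sampled points land close enough together and close enough to one of the existing points $x_1,\dots,x_{N-K}$ so that the new core has strictly smaller diversity. The natural strategy is to pick a ``target'' point among $x_1,\dots,x_{N-K}$, namely one of the points closest to the barycentre $\mu$ of $\{x_1,\dots,x_{N-K}\}$, and force all $K$ of $\zeta_1,\dots,\zeta_K$ to fall in a small ball around it. Since $F>\eps_1$, Lemma~\ref{lem2} gives $D(\{x_1,\dots,x_{N-K}\}) \ge \sqrt{2\eps_1/(N-K-1)}$, so at least one of the $x_i$ is ``far'' from $\mu$, and hence removing that far point (or points) and keeping the clustered configuration decreases $G$. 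The role of $S$ and the compactness/boundedness is exactly Proposition~\ref{prop1}: the target point lies in $\supp\zeta$, so $\P(\zeta\in B_\delta(\text{target}))\ge f(\delta)>0$ for a function $f$ depending only on $S$ (more precisely on a bounded set containing $S$), and then the probability that all $K$ new points land there is at least $f(\delta)^K$, which will be our $\sigma$.

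The key steps, in order, are as follows. First, fix $\delta>0$ small (to be chosen at the end in terms of $\eps_1, N, K$ only), let $p$ be a point of $\{x_1,\dots,x_{N-K}\}$ closest to the barycentre, and consider the event $A=\{\zeta_j\in B_\delta(p) \text{ for all } j=1,\dots,K\}$; by Proposition~\ref{prop1} applied to the bounded set $S$ (all relevant points lie in $\supp\zeta\cap \bar S$), $\P(A)\ge f(\delta)^K=:\sigma>0$. Second, work on the event $A$ and produce an explicit candidate subconfiguration of size $N-K$ whose diversity $G$ is small: take the $K$ new points $\zeta_1,\dots,\zeta_K$ together with $N-2K$ of the original points chosen to be those closest to $p$ (here Assumption is not needed, but $K\le N-2$ guarantees $N-2K$ could be negative only if $2K>N$, so one must instead just take $N-K$ points total from the pooled set of $N$ points that are ``closest to $p$'', which automatically includes all $K$ new points since they are within $\delta$ of $p$). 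Third, bound the diversity of this candidate: all chosen points lie within distance $D_{N-K}(x_1,\dots,x_{N-K})+\delta$ of $p$ and, more importantly, the candidate omits at least one point $x_{(N-K)}$ (the extreme point of the original core) whose squared distance to the barycentre of the original core is at least $\frac{\eps_1}{N-K}$ by the pigeonhole/averaging identity $F=\sum \|x_i-\mu\|^2 > \eps_1$. Fourth, compare: the minimal diversity over size-$(N-K)$ subsets of the pooled configuration is at most the diversity of our candidate, and an elementary computation (replacing the far point $x_{(N-K)}$ by a point within $\delta$ of $p$) shows this is at most $F(\{x_1,\dots,x_{N-K}\}) - \frac{\eps_1}{2(N-K)} + (\text{error terms of order }\delta)$; choosing $\delta$ small enough that the error is below $\frac{\eps_1}{2(N-K)} - \eps_2 = \frac{\eps_1}{2(N-K)}-\frac{\eps_1}{2(N-K)^2} = \frac{\eps_1(N-K-1)}{2(N-K)^2}$ finishes it.

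The main obstacle is the fourth step: getting a clean quantitative bound on how much the diversity drops when we swap the extreme point for a clustered new point, uniformly over all configurations in $S$ with $F>\eps_1$. The subtlety is that $G_{N-K}$ of the new core is defined as the \emph{minimum} over all size-$(N-K)$ subsets of the $N$ pooled points, so we only need an \emph{upper} bound on that minimum, which is easier — we just exhibit one good subset. But to control that subset's $G$ we need to track how the barycentre shifts when we remove the extreme point and add a near-$p$ point; the identity $G_n(\XX_n)=\sum_i \|x_i - \mu\|^2 = \inf_y \sum_i \|x_i-y\|^2$ is the right tool, since evaluating $\sum_i \|x_i - y\|^2$ at the convenient choice $y=\mu$ (the old barycentre of the original core, or $y=p$) gives an upper bound on $G$ of the new subset without needing to recompute a barycentre. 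Plugging $y=\mu$ into the sum over the candidate subset, the terms from the $N-2K$ retained original points are bounded by their original contributions, the $K$ new-point terms contribute at most $K(\,\text{dist}(p,\mu)+\delta)^2$, and we have dropped the large term $\|x_{(N-K)}-\mu\|^2 \ge \eps_1/(N-K)$ plus possibly other dropped original-point terms; since $p$ is the closest original point to $\mu$, $\text{dist}(p,\mu)^2 \le \eps_1/(N-K)$ as well, and careful bookkeeping of which $K$ original points are dropped versus which enter yields the stated $\eps_2$ with room to spare once $\delta$ is small. This bookkeeping — ensuring the net change is at most $-\eps_1/(2(N-K)^2)$ regardless of the geometry — is the only genuinely delicate part, and it is where the factor $2(N-K)^2$ in the definition of $\eps_2$ comes from.
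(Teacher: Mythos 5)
Your high-level strategy (exhibit a good size-$(N-K)$ subset after clustering the $K$ new points, and use Proposition~\ref{prop1} with a bounded set to get a lower bound $f(\delta)^K$ on the clustering probability) matches the paper's. However, your step four — the quantitative ``bookkeeping'' — has a genuine gap, and in fact the specific argument you sketch does not yield a decrease at all.

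The trouble is the choice to bound the new diversity by plugging the \emph{old} barycentre $\mu$ into $\inf_y \sum\|x_i-y\|^2 \le \sum\|x_i-\mu\|^2$, so as to ``avoid recomputing a barycentre.'' With that upper bound, if you drop the $K$ points farthest from $\mu$ (the best case for you) and add $K$ points within $\delta$ of $p$, the change is at most
\[
-\sum_{\text{dropped}}\|x_i-\mu\|^2 + K\bigl(\|p-\mu\|+\delta\bigr)^2.
\]
In a configuration where all $N-K$ points are equidistant from $\mu$ (e.g.\ equally spaced on a sphere), $\|p-\mu\|^2 = \|x_{(N-K)}-\mu\|^2 = F/(N-K)$, so the dropped sum is exactly $KF/(N-K)$ while the added term is $KF/(N-K) + O(\delta)$, and the net change is \emph{nonnegative}. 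Your bound shows no decay in precisely the worst configurations, while the lemma asserts a decay of at least $\eps_2$ uniformly. (A concrete instance: three points equally spaced on a unit circle in $\R^2$, $K=1$; the true minimum of $G$ over subsets drops from $3$ to $2$ after the swap, but the plug-in bound at $y=\mu=0$ gives $\approx 3$.) The actual decrease comes from the shift of the barycentre toward the cluster, which your ``fixed $y=\mu$'' trick discards exactly.

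The paper's proof gets around this by picking the target point differently and tracking the barycentre shift explicitly. Instead of the point closest to $\mu$, it picks $x_j$ with $\|x_1 - x_j\|\ge D/2$, i.e.\ a point \emph{far from the extreme point} $x_1$, replaces $x_1$ with a point near $x_j$, and computes the new barycentre $\mu' = (x_j - x_1)/(N-1)$. The crucial negative term in the resulting expansion is $-2(x_j-x_1)\cdot\bigl(\tfrac{x_j-x_1}{N-1}\bigr) = -\tfrac{2\|x_j-x_1\|^2}{N-1}$, which, combined with $\|x_j-x_1\|\ge D/2$ and the lower bound $D^2\ge 2F/(N-K-1)$ from Lemma~\ref{lem2}, yields the uniform multiplicative drop. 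You would need to incorporate this barycentre-shift term (or equivalently choose the reference $y$ as a near-optimal weighted combination rather than $\mu$) for the bookkeeping to close, and the ``closest-to-$\mu$'' target does not obviously make the required estimate go through.

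Two smaller points: the reduction of general $K$ to $K=1$ cannot be done by ``taking the $N-K$ closest-to-$p$ pooled points'' as you suggest, because controlling that subset's $G$ again hits the same barycentre issue; the paper instead replaces $N$ by $N'=N-K+1$ in the $K=1$ argument and observes that the true minimum over subsets is at most the value for the particular sub-configuration where exactly one new point is kept and $x_1$ is removed. Also, the set on which $f$ must be uniform is not $S$ itself but a dilated set $\overline{S_*}$ (which absorbs the at-most-$D\sqrt{N-K-1}$ excursion guaranteed by Lemma~\ref{lem3D}); this matters since the target point must lie in $\supp\zeta$ intersected with a compact set on which $f$ is defined.
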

\begin{proof}
We start with the case~$K=1$. 
Denote $D=\max_{1 \leq i,j \leq N-K} \n x_i - x_j \n$, and 
$S_*=\{x:\ \dist(x,S)<D\,\sqrt{N-K-1} \}$, then the set $\overline{S_*}$ is a compact set such that $\{\zeta,x_1,\dots,x_{N-1}\}'\in \overline{S_*}$ regardless of where the point $\zeta$ is sampled, by Lemma~\ref{lem3D}. Since $\overline{S_*}$ is compact, it follows from Proposition~\ref{prop1} applied to $\zeta \cdot 1_{\{\zeta\in S\}}$ that there is an $f:\mathbb{R}^+ \rightarrow \mathbb{R}^+$, such that for any $x \in \supp \zeta \bigcap \overline{S_*}$, we have $\P\left(\zeta\in B_\delta(x) \right)\geq f(\delta)$.
Assume that the core centre of mass~$\mu'=0$, and that (without loss of generality) $\n x_1\n\geq \n x_l\n$ for all $1\le l\le N-1$. 
Let $\mu'=\frac{y+x_2+...+x_{N-1}}{N-1}$ and consider the function 
$$
h(y)=\sum_{i=2}^{N-1}\n x_i -\mu' \n^2 + \n y-\mu' \n ^2,
$$
continuous in $y$. Pick a point~$x_j$ from $\{x_2,...,x_{N-1}\}$ such that $\n x_1 -x_j \n\geq \frac{D}{2}$ -- otherwise $\n x_i -x_j \n \leq \n x_1 -x_j \n + \n x_1 -x_i \n<D$, for all indices~$i,j$, contradicting the definition of~$D$.

Consider the configuration $\{x_j,x_2,...,x_{N-1}\}$, where we have removed the point~$x_1$ and replaced it with $x_j$. This configuration has centre of mass $\mu'=\frac{x_2+...+x_{N-1}+x_j}{N-1}=\frac{x_j-x_1}{N-1}$. The Lyapunov function evaluated for this configuration is precisely~$h(x_j)$. 
Denote $F_{\rm old}=F\left(\left\{x_1,...,x_{N-1}\right\}\right)$, then
\begin{align*}
h(x_j) &= \sum_{i=2}^{N-1}\n x_i -\mu' \n^2 + \n x_j-\mu' \n ^2 = \sum_{i=1}^{N-1}\n x_i -\mu' \n^2 + \n x_j-\mu' \n ^2 -\n x_1 -\mu' \n^2 \\ 
&=\sum_{i=1}^{N-1} \left(\n x_i \n^2 +\n \mu' \n^2 -2x_i\cdot \mu' \right) + \n x_j \n ^2 + \n \mu' \n^2 -2x_j\cdot \mu' -\n x_1 \n^2 -\n \mu' \n^2 +2x_1\cdot \mu' \\
&=\sum_{i=1}^{N-1} \n x_i \n^2 +(N-1)\n \mu' \n^2 + \n x_j\n ^2 -\n x_1 \n^2 -2\left(x_j-x_1 \right)\cdot\left(\frac{x_j-x_1}{N-1}\right)\\
&\leq F_{\rm old} + \frac{\n x_j -x_1 \n^2}{N-1} -2\frac{\n x_j -x_1 \n^2}{N-1} \leq F_{\rm old} -\frac{D^2}{4(N-1)} 
\leq \left(1-\frac{1}{2\left(N-1\right)^2} \right)F_{\rm old}, 
\end{align*}
where the last inequality follows from~\eqref{eqDF}.
Hence for some $\delta>0$ if $\n y-x_j \n\le \delta$, then $h(y)<\left(1-\frac{1}{4\left(N-1\right)^2} \right)F_{\rm old}$. So if $\zeta$ is sampled in $B_\delta(x_j)$, then we have a substantial decrease and this is with probability bounded below by~$f(\delta)$, the result is thus proved for the case~$K=1$ with~$\sigma=f(\delta)$. 

The general case can be reduced to the case~$K=1$ as follows. Set $N'=N-K+1$ and replace all~$N$ by~$N'$ in the arguments above. The decrease of~$F$ in this case will be at least by~$\eps_2(N')$. Indeed, since, if at least one particle falls in the ball $\{y:\ \n y-x_j \n\le \delta\}$, we could choose the sub-configuration, where exactly one point falls in this ball while~$x_1$ is removed, and since we are taking the minimum over all available configurations, the decrease has to be greater or equal than for this specific choice. 
\end{proof}

Assume that $\P(\XX'(t)\to\infty)<1$, otherwise Theorem~\ref{t1} follows immediately. Recall that $B_r(0)$ is a ball of radius $r$ centred at the origin and note that
\begin{align}\label{eqnotconverge}
\left\{\XX'(t)\not\to\infty\right\}&=
\bigcup_{r=1}^\infty \{\XX'(t)\in B_r(0)\ i.o.\}=
\bigcup_{r=1}^\infty G_r,
\end{align}
where
\begin{align*}
G_r&=\bigcap_{k\geq 0} \{ \tau_{k,r}<\infty\}, \
\tau_{k,r} =\inf\{t:t>\tau_{k-1,r}, \XX'(t)\in B_r(0)\},
\quad k=1,2,\dots, 
\end{align*}
with the convention that $\tau_{0,r}=0$, $\inf \emptyset = +\infty$ and that if $\tau_{k,r}=+\infty$, then $\tau_{k',r}=+\infty$ for all $k'\ge k$.

By the monotonicity of~$F$ we have $F(t)\downarrow F_\infty\ge 0$. 
We will show that in fact 
\begin{align}\label{eqstas1}
\P\left(\{\XX'(t) \not \to \infty \} \bigcap \{F_\infty>0 \} \right)=0
\end{align}
which is equivalent to the statement of the Theorem.

Let $n_0$ be some integer larger than $4(N-K)^2$, this quantity being related to $\eps_2$ from Lemma~\ref{lem3}. Since 
$$
\{F_\infty>0 \}=\bigcup_{n=n_0}^{\infty} \left\{F_\infty>\frac 1n \right\}
=\bigcup_{n=n_0}^{\infty}\bigcup_{m=0}^\infty \left\{F_\infty \in I_{n,m}\right\},
\quad \text{where} \quad
I_{n,m}=\left[\frac{1}{n}+\frac M{n^2} ,\frac{1}{n}+ \frac{m+1}{n^2} \right),
$$ 
are disjoint sets for each fixed $n$. Consequently, taking into account~\eqref{eqnotconverge}, to establish~\eqref{eqstas1} it suffices to show for each fixed $n$ and $m$ and $r$ we have
$$
\P\left(
G_r
 \bigcap \left\{ F_\infty \in I_{n,m}\right\} 
 \right)=0.
$$

Let $A_k=\left\{ F(\tau_{k,r}+1) \in I_{n,m} \right\}\bigcap\{\tau_{k,r}<\infty \}$, then obviously
\begin{align}\label{eqstas0}
G_r \bigcap \{ F_\infty \in I_{n,m} \} 
\subset
\bigcup_{k_0\geq 0} \bigcap_{k\geq k_0} A_k.
\end{align}
We will show now that for all $k_0$ we have $\P\left(\bigcap_{k\geq k_0} A_k \right)=0$.
which will imply that the probability of the RHS and hence that of the LHS of~\eqref{eqstas0} is $0$.
Indeed, for any positive integer~$L$
$$
\P\left(\bigcap_{k\ge k_0} A_k \right)
\le \P\left(\bigcap_{k=k_0}^{k_0+L}A_k\right)
=\P(A_{k_0})\prod_{k=k_0+1}^{t_0+L}\P\left(A_k \|\bigcap_{s=k_0}^{k-1} A_s\right).
$$
We now proceed to calculate the conditional probabilities, $\P\left(A_k \|\bigcap_{s=k_0}^{k-1} A_s\right).$ 
Setting $\eps_1=\frac 1n$ and 
letting $S$ be the ball of radius $\sqrt{2(1/n+(m+1)/n^2)}\left(1+\sqrt{N-K-1}\right)$ centred at $0$
in Lemma~\ref{lem3} and using the bound~\eqref{eqDF}, we obtain
$$
\eps_2=\frac{\eps_1}{4(N-K)^2}
=\frac{1}{4 n (N-K)^2} > \frac{1}{n^2}
$$
and thus with probability at least $\sigma$, given by Lemma~\ref{lem3}, $F$ will exit~$I_{n,m}$, that is,
$$
\P\left(F(\tau_{k,r}+1)\in I_{n,m} \| F(\tau_{k_0,r}+1),F(\tau_{k_0+1,r}+1),\dots, F(\tau_{k-1,r}+1)\in I_{n,m}, \tau_{k,k}<\infty\right)\leq 1-\sigma,
$$
since $\zeta_{\tau_{k,r}+1;j}$ are all independent from $\F_{\tau_{k,r}}$ for $1\le j\le K$.

From this we can conclude that, $\P\left(A_k \|\bigcap_{s=k_0}^{k-1} A_s \right)\leq 1-\sigma$ yielding $\P\left(\bigcap_{k\ge k_0}A_k\right)\le \left(1-\sigma\right)^{L}$ for all $L\ge 1$. Letting $L\rightarrow \infty$ shows that $\P\left(\bigcap_{k\geq k_0}A_k\right)=0$, which in turn proves~\eqref{eqstas1}. 
\end{proof}

\begin{corollary}\label{csing}
Suppose Assumption~\ref{AsuN2K} holds, $d=1$, and $\zeta$ has a support which is nowhere dense. Then 
$$
\P\left(\left\{\XX'(t)\to\phi\text{ for some }\phi\right\} \bigcup \left\{ \XX'(t) \to \infty \right\} \right)=1.
$$
\end{corollary}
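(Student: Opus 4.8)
The plan is to combine Theorem~\ref{t1} with the topological hypothesis on $\supp\zeta$. By Theorem~\ref{t1}, we may restrict attention to the event $\{F(t)\to 0\}\cap\{\XX'(t)\not\to\infty\}$ and must show that on this event $\XX'(t)\to\phi$ for some (random) $\phi\in\R$ almost surely. Since $F(t)\to 0$, by \eqref{eqDF} also $D(t)\to 0$, so the diameter of the core shrinks to zero and it suffices to show that the core centre of mass $\mu'(t)$ converges. The idea is that, because $\supp\zeta$ is nowhere dense in $\R$ (and $d=1$), for large $t$ the newly sampled points $\zeta_{t+1;1},\dots,\zeta_{t+1;K}$ that actually \emph{enter} the core must land within distance $o(1)$ of the old core, hence in a tiny neighbourhood of $\mu'(t)$; yet $\supp\zeta$, being nowhere dense, contains only ``thin'' pieces there, so effectively the position of $\mu'(t)$ is trapped.

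First I would make precise the reduction: on $\{F(t)\to 0\}\cap\{\XX'(t)\not\to\infty\}$, the set $\overline{\bigcup_t \XX'(t)}$ has a nonempty set of limit points, call it $\Lambda$, and since $D(t)\to 0$ the successive cores become arbitrarily concentrated, so $\Lambda\subseteq \supp\zeta$ (every accumulation point of core points is a limit of sampled points, hence in the closed support) and moreover $\Lambda$ is the set of subsequential limits of $\mu'(t)$. The goal is $|\Lambda|=1$. Suppose not: then there are two distinct points $a<b$ in $\Lambda$, both in $\supp\zeta$. Now use Assumption~\ref{AsuN2K} via Lemma~\ref{lem3D}: when the core is at scale $D(t)$ near some location, a newly sampled point is accepted into the core only if it is within $C(t)=D(t)\sqrt{N-K-1}\to 0$ of an old core point. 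So for the core's ``centre'' to move from a neighbourhood of $a$ to a neighbourhood of $b$, it must do so in steps of size $O(D(t))$, which forces the existence of points of $\supp\zeta$ at \emph{every} scale along a path from $a$ to $b$ --- but this does not yet contradict nowhere-density, so the key point must be sharper.

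The main obstacle --- and where I would concentrate the real work --- is ruling out that $\mu'(t)$ drifts slowly across a gap. The mechanism to exploit: since $\supp\zeta$ is nowhere dense, for any interval $J$ there is a subinterval $J'\subset J$ with $J'\cap\supp\zeta=\emptyset$. If infinitely often the core passes through a fixed small interval $I$ around some point $c$ strictly between $a$ and $b$ with $c\notin\supp\zeta$ (such $c$ exists since $[a,b]\not\subseteq\supp\zeta$, the support being nowhere dense), then once $D(t)$ is smaller than $\dist(c,\supp\zeta)$, no newly sampled point can be close enough to a core point sitting near $c$ to be accepted --- so by Lemma~\ref{lem3D} the core freezes, $\XX'(t+1)=\XX'(t)$, forever, contradicting $b\in\Lambda$. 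Thus the core can be near $c$ only finitely often; running this over a countable dense set of such ``forbidden'' centres $c$ (or over a countable base of the open complement of $\supp\zeta$), together with $D(t)\to 0$, pins the tail of $\mu'(t)$ into a single connected component of arbitrarily small diameter, giving convergence. I would formalize this by: (i) fixing $\omega$ in the event, (ii) choosing $c\in(a,b)\setminus\supp\zeta$ and $\rho>0$ with $B_{2\rho}(c)\cap\supp\zeta=\emptyset$, (iii) picking $t_0$ with $D(t)\sqrt{N-K-1}<\rho$ and $D(t)<\rho$ for $t\ge t_0$, (iv) observing that if $\XX'(t)\subset B_\rho(c)$ for some $t\ge t_0$ then every point within $C(t)<\rho$ of the core lies in $B_{2\rho}(c)$, hence is \emph{not} in $\supp\zeta$, hence is a.s.\ never the sampled value, so $\XX'(t)=\XX'(t+1)=\cdots$, contradicting the existence of the limit point $b\ne c$ unless the core is never in $B_\rho(c)$ after $t_0$. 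Hence $c$ separates $\R$ into two halves and the core tail lies entirely on one side; intersecting over a countable family of such separating points dense in $\R\setminus\supp\zeta$ confines the core tail to a single point of $\overline{\supp\zeta}$, which is the desired $\phi$.
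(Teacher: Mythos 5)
Your overall plan matches the paper's: Theorem~\ref{t1} reduces the problem to the event $\{F(t)\to 0\}\cap\{\XX'(t)\not\to\infty\}$, and the obstruction to $\mu'(t)$ oscillating is a support-free subinterval combined with $D(t)\to 0$ and the rejection bound of Lemma~\ref{lem3D}. The ingredients are all present, but the central step is muddled in a way that leaves a real hole.

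The ``freezing'' sub-argument in your step (iv) is vacuous. Every core point lies a.s.\ in $\supp\zeta$ (the initial points are required to, and sampled points are a.s.\ in $\supp\zeta$), and you chose $c,\rho$ so that $B_{2\rho}(c)\cap\supp\zeta=\emptyset$; hence the hypothesis $\XX'(t)\subset B_\rho(c)$ can never be realized, and the branch ``the core freezes'' never fires. All you actually extract from (iv) is that the core never enters $B_\rho(c)$ --- which is trivially true, not a consequence of the rejection mechanism. The genuine content is in the following sentence, ``hence $c$ separates $\R$ and the core tail lies entirely on one side,'' which you assert but do not justify: you must rule out the core \emph{jumping over} the gap in a single step. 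You have the needed estimate sitting in your setup (you chose $t_0$ with $D(t)\sqrt{N-K-1}<\rho$ for $t\ge t_0$), but you never invoke it. The paper makes exactly this missing step explicit, and more economically: it tracks the $k$-th \emph{ordered} core point $x_{(k)}(t)$, which a.s.\ lies in $\supp\zeta$; if $x_{(k)}$ oscillated across a support-free interval $(x-\eps,x+\eps)\subset(a,b)$, then at each crossing time $s$ one would have $|x_{(k)}(s-1)-x_{(k)}(s)|\ge 2\eps$, so $\dist(\XX'(s-1),\XX'(s))\ge 2\eps$ infinitely often, contradicting $\dist(\XX'(t),\XX'(t+1))\to 0$, which follows from $D(t)\to 0$ together with Lemma~\ref{lem3D}. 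If you delete the freezing detour and instead apply your own bound $D(t)(1+\sqrt{N-K-1})<2\rho$ to show a one-step crossing of $B_\rho(c)$ is impossible, your proof closes and is essentially the paper's.
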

\begin{proof}
Assume that $\XX'(t) \not \to \infty$ occurs and for $a<b$ define 
$$
E_{a,b}= \{\liminf_{t\to\infty} x_{(k)}(t)<a\}\cap \{\limsup_{t\to\infty} x_{(k)}(t)>b\},
$$
where $k\in\{1,2,\dots,N-K\}$ and $x_{(k)}$ is the $k-$th point of the core. By Theorem~\ref{t1} $F(t)\to 0$, implying, in turn, that $D(t)\to 0$, and hence
by Lemma~\ref{lem3D}
\begin{align}\label{eqdistchi}
{\sf dist}(\XX'(t),\XX'(t+1))=\max_{1\le i,j\le N-K}
|x_{(i)}(t)-x_{(j)}(t+1)|\to 0 
\end{align}
as $t\to\infty$.
 
Since $\supp\zeta$ is nowhere dense, there exists $x\in (a,b)$ and $\epsilon>0$ such that $(x-\eps,x+\eps) \subseteq (\supp\zeta)^c$. However, then
$$
E_{a,b}\subseteq \dist(\XX'(t),\XX'(t+1))>2\eps\ \text{ i.o.}\},
$$
implying from~\eqref{eqdistchi} that $\P(E_{a,b})=0$. Since this is true for all $a$ and $b$, $\XX'(t)$ must converge.
\end{proof}

\section{Convergence of the core}\label{sec-core}
\begin{defin}\label{regularsubset}
A subset $A\subseteq \supp\zeta$ is regular with parameters $\delta_A \in (0,1),\sigma_A>0, r_A>0$ if 
\bnn\label{regineqmulti}
\P(\zeta\in B_{r \delta_A}(x)\| \zeta\in B_{r}(x))\ge \sigma_A
\enn
for any $x\in A$ and $r\le r_A$.
\end{defin}

\begin{asu}\label{Asumulti}
For any $x\in \supp\zeta$, there exists some $\gamma=\gamma(x)$ such that the set $B_{\gamma}(x)\cap (\supp\zeta)$ is regular.
\end{asu}

\begin{rema}\label{remamultidelta}
We can iterate the inequality~(\ref{regineqmulti}) to establish that
$$
\P(\zeta\in B_{r \delta_A^k}(x)\| \zeta\in B_{r}(x))\ge \sigma_A^k, \quad k\ge 2.
$$
Hence it is not hard to check that if Definition~\ref{regularsubset} holds for some $\d_A\in(0,1)$ it holds for all $\delta\in (0,1)$, albeit possibly with a smaller $\sigma_A$.
\end{rema}

\begin{lemma}\label{compactcover}
Under Assumption~\ref{Asumulti}, for any compact subset $A\subset\supp\zeta$ and $\delta\in(0,1)$ there exists~$r_A$ and~$\sigma_A$ such that~$A$ is regular with parameters $\delta,\sigma_A,r_A$.
\end{lemma}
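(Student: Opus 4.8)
The plan is to deduce the lemma from Assumption~\ref{Asumulti} by a standard compactness argument, using Remark~\ref{remamultidelta} to handle the fact that the regularity parameter $\delta_A$ furnished by the assumption at a given point need not equal the target value~$\delta$. Fix a compact $A\subset\supp\zeta$ and a target $\delta\in(0,1)$. For each $x\in A$, Assumption~\ref{Asumulti} gives a radius $\gamma(x)>0$ such that $B_{\gamma(x)}(x)\cap\supp\zeta$ is regular with some parameters $\delta_x\in(0,1)$, $\sigma_x>0$, $r_x>0$. By Remark~\ref{remamultidelta} we may (at the cost of shrinking $\sigma_x$) replace $\delta_x$ by our fixed $\delta$, so that in fact $B_{\gamma(x)}(x)\cap\supp\zeta$ is regular with parameters $\delta,\sigma_x,r_x$.

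Next I would extract a finite subcover. The open balls $\{B_{\gamma(x)/2}(x)\}_{x\in A}$ cover the compact set $A$, so finitely many of them, say centred at $x_1,\dots,x_m$, already cover $A$. Set $r_A=\min_i\min\{r_{x_i},\gamma(x_i)/2\}$ and $\sigma_A=\min_i\sigma_{x_i}$, both strictly positive since the minimum is over finitely many positive numbers. I then claim $A$ is regular with parameters $\delta,\sigma_A,r_A$: given any $x\in A$ and any $r\le r_A$, pick $i$ with $x\in B_{\gamma(x_i)/2}(x_i)$. The inequality~(\ref{regineqmulti}) for the regular set $B_{\gamma(x_i)}(x_i)\cap\supp\zeta$ applies at the point $x$ provided $x$ lies in that set and $r\le r_{x_i}$; the latter holds since $r\le r_A\le r_{x_i}$, and the former holds because $\|x-x_i\|<\gamma(x_i)/2<\gamma(x_i)$ and $x\in\supp\zeta$. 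Hence $\P(\zeta\in B_{r\delta}(x)\mid\zeta\in B_r(x))\ge\sigma_{x_i}\ge\sigma_A$, which is exactly what is required.

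The only point that needs a little care — and what I would flag as the main (mild) obstacle — is making sure the conditioning event $\{\zeta\in B_r(x)\}$ has positive probability, so that the conditional probabilities in~(\ref{regineqmulti}) are well defined throughout; this is automatic because $x\in\supp\zeta$ forces $\P(\zeta\in B_r(x))>0$ for every $r>0$. Beyond that, the argument is entirely routine: the assumption is purely local, regularity is inherited by subsets (the defining inequality at a point $x$ only concerns balls around that $x$), and compactness converts the pointwise data into uniform constants. One should also note that we do not need the balls $B_{\gamma(x_i)/2}(x_i)$ themselves to be contained in $A$ — we only use them to locate, for each $x\in A$, one index $i$ whose regular neighbourhood $B_{\gamma(x_i)}(x_i)\cap\supp\zeta$ contains~$x$.
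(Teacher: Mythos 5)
Your proof is correct and takes essentially the same approach as the paper's: extract a finite subcover of $A$ from the regular neighbourhoods furnished by Assumption~\ref{Asumulti}, take minima of the finitely many parameters, and invoke Remark~\ref{remamultidelta} to adjust to the target $\delta$. The only (immaterial) differences are that you convert each local $\delta_x$ to the common $\delta$ pointwise before passing to the finite subcover, whereas the paper first obtains regularity of $A$ with $\delta'=\max_k\delta_k$ and then converts; and your use of half-radius balls $B_{\gamma(x)/2}(x)$ and the extra term $\gamma(x_i)/2$ in $r_A$ are unnecessary, since Definition~\ref{regularsubset} only requires the centre $x$ (not the ball $B_r(x)$) to lie in the regular set.
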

\begin{proof}
The union $\bigcup_{x\in A} B_{\gamma(x)}(x)$
is an open covering of $A$, where $B_{\gamma_x}(x)$ is the regular ball centred in $x$ given to us by Assumption \ref{Asumulti}. Since $A$ is compact, it follows that there is a finite subcover of~$A$. In other words, there exist sequences 
$$
\{x_k\}_{k=1}^M\subseteq A,\qquad 
\{\sigma_k\}_{k=1}^M,
\{r_k\}_{k=1}^M,
\{\delta_k\}_{k=1}^M,
\{\gamma_k\}_{k=1}^M
\subseteq \R^+
$$ 
such that $A \subseteq \bigcup_{k=1}^M B_{\gamma_k}(x_k)$ and $\P(\zeta\in B_{r \delta_k}(x)\| \zeta\in B_{r}(x))\ge \sigma_k$ for $x\in B_{\gamma_k}(x_k)$ and $r\le r_k$. 
Now let $\sigma'=\min_{1\le k\le M} \sigma_k$, $\delta'=\max_{1\le k\le M}\delta_k$,$r'=\min_{1\le k\le M} r_k$. It follows that for any $x\in A$
$$\P(\zeta\in B_{r \delta'}(x)\| \zeta\in B_{r}(x))\ge \sigma',$$
when $r\le r'$. We conclude by noting that by Remark \ref{remamultidelta} there exists $\sigma_A$ such that for each $x\in A$
$$
\P(\zeta\in B_{r \delta}(x)\| \zeta\in B_{r}(x))\ge \sigma_A.
$$
\end{proof}

\begin{thm}\label{tmulti}
Under Assumptions~\ref{AsuN2K} and~\ref{Asumulti}
$$
\P\left( \left\{\XX'(t) \to \phi\text{ for some }\phi\in\R^d \right\} \cup \{\XX'(t)\to \infty \} \right)=1.
$$
\end{thm}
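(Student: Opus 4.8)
The plan is to work on the event $\{\XX'(t)\not\to\infty\}$ and show that there $\XX'(t)$ converges to a single point. By Theorem~\ref{t1}, on this event $F(t)\to 0$ a.s., so the diameter $D(t)\to 0$ and, by Lemma~\ref{lem3D} together with~\eqref{eqdistchi}-style reasoning, $\dist(\XX'(t),\XX'(t+1))\to 0$. Thus the core ``crystallizes'' into an ever-shrinking cluster, and the only thing left to rule out is that the cluster centre $\mu'(t)$ drifts off without settling down (a $\liminf\ne\limsup$ situation in some coordinate). So the crux is to show that $\mu'(t)$ is a.s.\ Cauchy on $\{\XX'(t)\not\to\infty\}$.

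First I would localize: decompose $\{\XX'(t)\not\to\infty\}$ as in~\eqref{eqnotconverge} into the events $G_r$ on which $\XX'(t)\in B_r(0)$ infinitely often, and further intersect with $\{\mu'(t)\in B_R(0)\ \text{eventually}\}$ for integer $R$; it suffices to show convergence on each such piece. On such a piece, eventually the whole core lies in a fixed compact set $A_R\subset\R^d$ which (enlarging if necessary) we may take to be $\overline{B_{R+1}(0)}\cap\supp\zeta$-neighbourhood; by Lemma~\ref{compactcover}, $A_R$ is regular with parameters $\delta,\sigma_A,r_A$ for any chosen $\delta\in(0,1)$. The regularity is exactly the tool that lets newly sampled points land close to existing core points with non-vanishing conditional probability: if the current core has diameter $D(t)\le r_A$ and we want it to contract, sampling $\zeta$ inside $B_{\delta D(t)}(x_i)$ for a core point $x_i$ has conditional probability at least $\sigma_A$ (conditioning on the bounded-support event as in the proof of Lemma~\ref{lem3}), and on that event the new core has diameter bounded by a constant times $\delta$ factor times the old plus the displacement, so $\mu'$ moves by at most $O(D(t))$.

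The key quantitative step is then a Borel--Cantelli / geometric-series argument for $\mu'$. I would set up a sequence of ``good'' times and show: whenever $D(t)$ is small (say below some threshold reached a.s.\ because $F(t)\to0$), with probability at least $\sigma_A$ the core contracts by a fixed factor within one step, so that on the good piece, $D(t)$ is eventually dominated by a summable sequence — more precisely, use that $D(t)$ is non-increasing up to the bounded factor $\sqrt{N-K-1}$ in~\eqref{eqDFD}, combined with the fact (from Lemma~\ref{lem3}'s mechanism, now with regularity replacing compactness) that $F$ cannot stay in any dyadic-type interval $I_{n,m}$ forever — to conclude $\sum_t D(t)<\infty$ is \emph{not} automatic, so instead I would bound the total displacement of $\mu'$ directly: at each step $\|\mu'(t+1)-\mu'(t)\|\le \dist(\XX'(t),\XX'(t+1))\cdot\frac{K}{N-K}$-type bound, and since $\dist(\XX'(t),\XX'(t+1))\le \sqrt{2F(t)}\to 0$, one shows the jumps are eventually tiny; the remaining worry is infinitely many small-but-not-summable jumps accumulating to oscillation. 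To kill that, I would argue that once $D(t)<\eps$, the core stays within a ball of radius $C\eps$ of $\mu'(t)$ forever after with probability bounded below, iterating regularity: with the regular-set contraction, $D(t)$ actually decays geometrically along a subsequence of positive density, and a standard coupling/Borel--Cantelli shows that a.s.\ eventually $\sum_{s\ge t}\dist(\XX'(s),\XX'(s+1))<\infty$, forcing $\mu'(t)$ to be Cauchy.

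The main obstacle I expect is precisely this last point — upgrading ``$D(t)\to0$ and steps $\to0$'' to ``$\mu'(t)$ converges'', since a priori the core could wander arbitrarily far while shrinking, with the total path length diverging. Resolving it requires using Assumption~\ref{Asumulti} in an essential, quantitative way (not merely $F\to0$): one must show that the contraction of $D(t)$ happens fast enough — geometrically, with a uniform rate on compacts, via Lemma~\ref{compactcover} — that the displacements $\|\mu'(t+1)-\mu'(t)\|$ form a summable series almost surely on each localized event $G_r\cap\{\mu'(t)\in B_R(0)\ \text{ev.}\}$. Feeding this back through the decomposition~\eqref{eqnotconverge} and letting $r,R\to\infty$ then yields the theorem; handling the measurability of the stopping times and the conditioning on $\F_t$ is routine given the setup already used in the proof of Theorem~\ref{t1}.
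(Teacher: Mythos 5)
Your proposal identifies the right tools (uniform regularity on compacts via Lemma~\ref{compactcover}, geometric decay of $F$ with a Hoeffding/Borel--Cantelli argument, a ``trapping'' step) and correctly names the crux: upgrading ``$D(t)\to 0$ and steps $\to 0$'' to convergence of $\mu'$. However, there are two genuine gaps in the organization.

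First, the localization. You decompose $\{\XX'(t)\not\to\infty\}$ into $G_r\cap\{\mu'(t)\in B_R(0)\ \text{eventually}\}$ and say it suffices to work on each piece, but the union over $R$ of $\{\mu'(t)\in B_R(0)\ \text{eventually}\}$ misses the event $\{\liminf\|\mu'\|<\infty,\ \limsup\|\mu'\|=\infty\}$, which is not excluded a priori. You cannot restrict to ``$\mu'$ eventually bounded''; boundedness is essentially what you are trying to prove. The paper's decomposition~\eqref{inclusionlimitpoints} avoids this by observing that non-convergence plus $\XX'(t)\not\to\infty$ (and $D(t)\to 0$) forces \emph{two} finite accumulation points, from which it fixes two disjoint rational balls visited i.o.

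Second, and more importantly, the resolution of the circularity you flag (``to get geometric decay I need the core to stay in a fixed compact regular set; to show the core stays there I need geometric decay'') is only asserted, not carried out. The paper breaks it with a fixed geometric object: the compact \emph{shell} $G$ between the two balls. Whenever the oscillating path must cross $G$, there is a $\tau_0$-moment where the core lies in the middle of $G$ with $F$ as small as we like. Lemma~\ref{LyapDecreaseMulti} (a quantitative regular-set version of Lemma~\ref{lem3} with constants $a,\sigma$ independent of the current scale) and Lemma~\ref{lemAmMulti} (a Hoeffding estimate showing $F(\tau_{(m+M)^2})\le R^2/(M^2 4^{2m+M})$ with conditional probability $\ge 1-e^{-\sigma^2(m+M)}$) show that starting from such a $\tau_0$ the core stays inside the shell forever with probability at least $1-\sigma^{-2}e^{-\sigma^2 M}$. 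Since on the bad event the path must later leave the shell to reach the other ball, one gets $\P(E)\le \sigma^{-2}e^{-\sigma^2 M}\to 0$ as $M\to\infty$. Your alternative — iterating trapping attempts and invoking Borel--Cantelli — could be made to work, but each ``trapping attempt'' is an infinite-horizon event, so you would need to organize the attempts sequentially (try at $\sigma_k$, if the core later escapes $B_{r+1}(0)$ move to $\sigma_{k+1}$, etc.) and quantify the per-attempt success probability via something like Lemma~\ref{lemAmMulti}; none of that machinery is supplied in your sketch, and it is precisely the technical content of the proof.
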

\begin{proof}
Firstly, $\P\left(\{\exists\lim_t \mu'(t)\}\triangle \right \{ \XX'(t) \to \phi \text{ for some }\phi\} )=0$, since, if $\mu'(t)$ converges, then $\XX'(t) \not \to \infty$, which implies $D(t)\to 0$ by Theorem~\ref{t1}, yielding convergence of the core to the same point. 

From an elementary calculus it follows that if neither the centre of mass converges to a finite point nor the configurations goes to infinity, then there must exist two arbitrarily small non-overlapping balls (w.l.o.g.\ centred at rational points) which are visited by $\mu'$ infinitely often, that is
\begin{align}\label{inclusionlimitpoints}
\{\lim_t \mu'(t)\text{ does not exist} \}\cap\{\XX'(t)\not\to \infty\}&=
\bigcup_{n=1}^\infty\bigcup_{\substack{q_1,q_2 \in \mathbb{Q}^d,\\ \n q_1 -q_2 \n \ge 6/n}} E_{q_1,q_2,n},
\\ \nonumber
\text{where }
E_{q_1,q_2,n}&=
\left\{\mu'(t)\in B_{\frac2n}(q_1)\text{ i.o.}\right\}\bigcap 
\left\{\mu'(t)\in B_{\frac2n}(q_2)\text{ i.o.}\right\}.
\end{align}
To show~\eqref{inclusionlimitpoints}, note that $\{\lim_t \mu'(t) \text{ does not exist} \}\cap\{\XX'(t)\not\to \infty\}$ is equivalent to existence of at least two distinct points in the set of accumulation points of $\{\mu'(t)\}_{t=1}^\infty$, say $x_1$ and $x_2$.
Now take $q_1,q_2\in \Q^d$ such that $\n q_j-x_j\n\le \frac1n$, $j=1,2$,
then $\mu'\in B_{\frac1n}(x_j)\subseteq B_{\frac2n}(q_j)$, $j=1,2$, infinitely often; moreover $\n q_1-q_2\n\ge \frac{8}{n}-\frac1n-\frac1n=\frac 6n$ as required.
Thus it suffices to prove that $\P(E_{q_1,q_2,n})=0 $ for all $n\in \N$ and $q_1,q_2 \in \Q^d$ such that $ \n q_1 -q_2 \n \ge \frac{6}{n}$ to show that the LHS of \eqref{inclusionlimitpoints} has measure zero, and then the Theorem will follow. 

For simplicity, w.l.o.g.\ assume that $q_1=0$ and 
denote $E=E_{0,q_2,n}$, $R=2/n$, and $G=
(\supp\zeta)\cap\left(B_{2R}(0) \setminus B_{R}(0)\right)$.
Since every path from $B_{\frac2n}(0)$ to $B_{\frac2n}(q_2)$ must cross $G$, on $E$ the shell $G$ must be crossed infinitely often (by this we mean that $\n \mu'(t) \n > 2R$ i.o.\ and $\n \mu'(t) \n < R$ i.o.) -- please see Figure~\ref{Figu3}. 
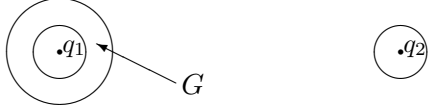
\begin{figure}\caption{The shell $G$ \label{Figu3}}
\begin{picture}(200,50)(-160,0)
\put(1,30){\circle{40}}
\put(1,30){\circle{20}}
\put(1,30){\circle*{2}}
\put(2,30){\small $q_1$}
\put(130,30){\circle{20}}
\put(130,30){\circle*{2}}
\put(131,30){\small $q_2$}
\put(45,18){\vector(-2,1){30}}
\put(47,14){$G$}
\end{picture}
\end{figure}
Since $\XX'(t)\not \to \infty$ on $E$, it follows from Theorem~\ref{t1} that $F(t)\to 0$ a.s. on $E$ and therefore additionally $\XX'(t)\subset G$ i.o.\ (the core points cannot jump over the set $G$ once the spread is sufficiently small); moreover the set $G$ is regular by Lemma~\ref{compactcover}. We have also the following result.
\begin{lemma}\label{LyapDecreaseMulti}
Under Assumption~\ref{Asumulti}, given $N-K$ points $x_1,\cdots,x_{N-K}$ in $G$, there are constants $a,\sigma\in (0,1)$ depending on $N$, $K$ and $\sigma_G$ only, such that
$$
\P\left(\{ F\left(\{\zeta_1,\dots,\zeta_K,x_1,\dots,x_{N-K}\}'\right)\le aF(\{x_1,\dots,x_{N-K}\})\} \right)\ge \sigma.
$$
\end{lemma}
(Remark the similarity of this statement with Lemma~\ref{lem3}; the difference here, however, comes from the fact that the probability of decay~$\sigma$, does not depend on the value of $F$, thanks to Assumption~\ref{Asumulti}.)
\begin{proof}
We start with the case $K=1$. Due to the translation invariance of $F$ we can assume w.l.o.g. that $\sum_{i=1}^{N-1}x_i=0$. Let $D=\max_{i,j \in \{1,\cdots,N-1\}} \n x_i - x_j \n$, and assume additionally that $\n x_1 \n \ge \n x_k \n$ for all $k$ and take $x_j$ such that $\n x_1 - x_j \n\ge \frac{D}{2}$. Let $\mu'=\frac{x_2+\cdots+x_{N-1}+\zeta}{N-1}=\frac{\zeta-x_1}{N-1}$ and $F_{old}=F(\{x_1,\cdots,x_{N-1}\})$. If we take $\zeta \in B_{\frac{1}{8}\sqrt{\frac{F_{old}}{N}}}(x_1)$, then
$$\n \zeta -x_1 \n \ge \n x_1 -x_j \n - \n \zeta - x_j \n \ge \frac{D}{2} -\frac{1}{8}\sqrt{\frac{F_{old}}{N}}.$$
From this we can deduce that $\n \zeta -x_1 \n^2\ge \frac{D^2}{8}\ge \frac{F_{old}}{4(N-1)}.$
for some fixed $a\in (0,1)$ (which is only a function of $N$ and $K$).
By Lemma~\ref{lem3D} the event $\left\{\zeta \not \in B_{H\sqrt{2F_{old}}}(x_j)\right\}$, where $H=\sqrt{N-K-1}$, implies that $\{\zeta_1,x_1,\cdots,x_{N-1}\}'=\{x_1,\cdots,x_{N-1}\}$ (i.e. $\zeta$ is eliminated) and by Lemma~\ref{compactcover} we can assume that $\delta$ and $\sigma$ are chosen such that 
$$
\P\left( \zeta \in B_{\frac{1}{8}\sqrt{\frac{F_{old}}{N}}}(x_j) \| \zeta \in B_{H\sqrt{2F_{old}}} (x_j) \right)\ge \sigma.
$$
Skipping the first few steps that are identical to those in Lemma~\ref{lem3}, we obtain the following bound
\begin{align*}
&F\left(\{\zeta,x_2,\cdots,x_{N-K}\}\right)=\sum_{i=2}^{N-1}\n x_i -\mu' \n^2 + \n \zeta-\mu' \n ^2 
\le \left(1- \frac{1}{4(N-1)^2}\right)F_{old}.
\end{align*}
Since $F\left(\{\zeta,x_2,\cdots,x_{N-K}\}\right)<F_{old}$, one of the points $x_1,\cdots,x_{N-1}$ must be discarded. Thus, in the case $K=1$, we can pick $a=1- \frac{1}{4(N-1)^2}.$ For general $K$, one can make an argument analogue to the one made at the end of the proof of Lemma \ref{lem3}.
\end{proof}

Define for $t\ge 0,$
$$
\eta(t)=\inf\{s\ge t+1: \ \XX'(s)\ne \XX'(s-1)\hspace{2mm} \text{or } F(s)=0 \} . 
$$
(Notice that by definition if $F(\eta(t))=0$, i.e.\ all the points of the core have converged to a single point, then $\eta(t+1)=\eta(t)+1$. So from now we assume that this is not the case.)
Fix some large $M\ge 5$ such that
$$
a^{\sigma M}\le \frac1{16}, \quad
$$
and define $\tau_{0}=\tau_{0}^{(M)}$ such that 
$$
\XX'(\tau_0)\subseteq B_{\frac{7}{4}R} (0)\setminus B_{\frac54 R} (0),\quad 
F(\tau_0)\le \frac{R^2}{M^2\, 4^M}
$$
and set also $\tau_i=\eta(\tau_{i-1}), i=1,2,\dots$ (that is, the next time the core changes). Since $F(t)\to 0$ on $E$, and we cross $G$ infinitely often, we must visit the region $B_{\frac{7}{4}R} (0)\setminus B_{\frac{5}{4}R} (0)$ infinitely often as well, therefore $E\subseteq A_ M=\{\tau_{0}^{(M)}<\infty\}$ for all $M\in \N$. 

For $m\ge 0$ define
\begin{align}\label{eqAAA}
A'_m&=A'_{m,M}=\left\{F(\tau_{(m+M)^2})\le 
\frac{R^2}{M^2 4^{2m+M}}\right\}, \nonumber
\\
A''_m&=A''_{m,M}=\left\{\XX'(\tau_{(m+M)^2})\subseteq 
B_{\left[2-2^{-m-2}\right] R}(0) 
\setminus 
B_{\left[1+2^{-m-2}\right] R }(0)\right\},
\\ \nonumber
A_ M&=A_{m,M}=A_{m-1}\cap\left(A'_ M\cap A''_ M\right).
\end{align}
Note that the definition is even consistent for $m=0$ if we define $A_{-1}=\{\tau_0<\infty\}$ and that in principle $A_m$, $A'_m$ and $A''_m$ also depend on $M$, but we omit the second index, where this does not create a confusion.
\begin{lemma}\label{lemAmMulti}
$\P\left(A_{m+1} \| A_ M\right)\ge 
 1-e^{-\sigma^2(m+M)}$, $m=0,1,2,\dots$.
\end{lemma}
\begin{proof}
First, note that $A_m \subseteq A''_{m+1}$. Indeed, since~$2K<N$, we must have in the core of the new configuration at least one point from the previous core (this is not true in general if~$2K\ge N$), so 
$$
\min_{x\in \XX'(t+1)} \n x \n \ge \min_{x\in \XX'(t)} \n x \n - D(t+1)
$$
and as a result on $A_m$ we have
\begin{align*}
\dist\left(\XX'(\tau_{(m+M+1)^2}), B_{R}(0) \right) 
&= \min_{x\in \XX'(\tau_{(m+M+1)^2})} \n x \n -R 
\ge 
\min_{x\in \XX'(\tau_{(m+M)^2})} \n x \n -R -
\sum_{t=\tau_{(m+M)^2+1}}^{\tau_{(m+M+1)^2}} D(t)
\\ &\ge
\min_{x\in \XX'(\tau_{(m+M)^2})} \n x \n -R -
[2(m+M)+1] \sqrt{2F(\tau_{(m+M)^2})}
\\ &
\ge \left(1+\frac{1}{2^{m+2}}-1-\frac{2(m+M)+1}{\sqrt{M^2 4^{2m+M}}}\right)\, R
\\
&\ge
\left(\frac{1}{2^{m+2}}-\frac{1}{2^{m+3}}\frac{2(m+M)+1}{M\, 2^{M+m-3}}\right)\, R
\ge \frac{R}{2^{m+3}}
\end{align*}
since for all~$j\ge 0$, we have $D(t+j)\le \sqrt{2F(t)}$ by Lemmas~\ref{lem1} and~\ref{lem2}, and $\frac{2(m+M)+1}{M\, 2^{M+m-3}}<1$ for all $m\ge 0$ as long as $M\ge 5$.
By a similar argument 
$$
 \dist\left(\XX'(\tau_{(m+M+1)^2}),\left(B_{2R}(0)\right)^c \right)=2R- \max_{x\in \XX'(\tau_{(m+M+1)^2})} \n x \n \ge\frac{R}{2^{m+3}},
$$
and hence $ A''_{m+1}$ occurs.

Consequently, when $A_M$ occurs, $\XX'(t)\subseteq G$ for all $t\in\left(\tau_{(m+M)^2},\tau_{(m+1+M)^2}\right)$. At the same time the core undergoes $N=2(m+M)+1$ changes between the times $\tau_{(m+M)^2}$ and $\tau_{(m+M+1)^2}$.
During each of these changes the function $F$ does not increase, and with probability at least $\sigma$ decreases by a factor at least $a<1$ regardless of the past, by Lemma~\ref{LyapDecreaseMulti} . Hence
$$
\P\left(F(\tau_{(m+M+1)^2})>a^{\sigma N/2} F(\tau_{(m+M)^2}) \right)
\le
\P(Y_1+\dots+Y_N<\sigma N/2),
$$
where $Y_i$ are i.i.d.\ Bernoulli($\sigma$) random variables. It suffices now to get a bound on the RHS, since $a^{\sigma N/2}\le a^{\sigma(m+M)}
\le a^{\sigma M}\le \frac 1{16}$. However, the bound for the sum of $Y_i$ follows from the Hoeffding inequality~\cite{Hoef}:
$$
\P(Y_1+\dots+Y_N<\sigma N/2)\le \exp(-\sigma^2 N/2)
\le \exp(-\sigma^2 (m+M)).
$$
Consequently, $A_{m+1}'$ and hence $A_{m+1}$ also occur, with probability at least $\exp(-\sigma^2 (m+M)).$
\end{proof}

Note that for a fixed $M$, $A_{m,M}$ is a decreasing sequence of events. Let $\bar{A}_M=\bigcap_{m=0}^{\infty} A_{m,M}$. Lemma~\ref{lemAmMulti} implies by induction on~$m$ that 
\begin{align*}
\P\left(\bar{A}_M\right)&=
 \P\left(A_{0,M} \right)\, \prod_{m=1}^\infty 
\P\left(A_{m,M} \| A_{m-1,M}\right)
\ge 
\P\left(A_{0,M} \right)\, \prod_{m=1}^\infty \left(1-e^{-\sigma^2(M+m})\right)
\\ &
\ge
\P(A_{0,M})\left[ 1-\sum_{m=1}^{\infty} e^{-\sigma^2(M+m}) \right]
\ge 
\P(A_{0,M})
\left[ 1-\sigma^{-2} e^{-\sigma^2 M} \right].
\end{align*}
It is easy to see that on $\bar{A}_M$ the points of the core $\XX'(t)$ do not ever leave the set $G$ after time $\tau_0$, hence
$\sup_{t>\tau_0} \n \mu'(t) \n < \frac{3R}{4}$ on $\bar{A}_M$. At the same time on $E$ we must visit $B_{2/n}(q_2)$ which lies outside of the convex hull of $G$, thus $\sup_{t>\tau_0} \n \mu'(t) \n > \frac{3R}{4}$, therefore $E\cap \bar{A}_M=\emptyset$. Since $E\subseteq A_{0,M}$ and $\bar{A}_M\subseteq A_{0,M}$, we have
$$
\P(E)=\P( E\setminus \bar{A}_M)\le \P\left(A_{0,M} \setminus \bar{A}_M\right)=\P(A_{0,M})-\P(\bar{A}_M)\le \sigma^{-2} e^{-\sigma^2 M} \P\left(A_{0,M} \right)\le \sigma^{-2} e^{-\sigma^2 M}
$$
for any $M\ge 0$. Since $M$ can be arbitrarily large, we see that $\P(E)=0$, finishing the proof.
\end{proof}

\section{Case $K=d=1$}\label{sec-Kd1}
In the case with $K=1$ and the space is $\R^1$, we can obtain some more detailed results, given some further assumptions.
If $d=1$, we will also write $\XX'(t) \to +\infty$ whenever $\lim_{t\to\infty}\min\{x,\ x\in\XX'(t)\} =\infty$;  similarly write $\XX'(t) \to -\infty$ whenever $\lim_{t\to\infty}\max\{x,\ x\in\XX'(t)\} =-\infty$.

\begin{asu}[at most exponential oscillations in the tail]\label{Asu}
Suppose that there exist some $R_+,R_-\in \R$, a constant $C\ge 0$ such that given for any $a\ge R_+$ and $u>0$ we have 
$$
\P\left(a+u<\zeta\le a+2u\right)\le C\P\left(a<\zeta\le a+u\right).
$$ 
Similarly for all $a\le R_-$ and $u<0$ we have 
$$
\P\left(a+2u<\zeta\le a+u\right)\le
C\P\left(a+u <\zeta\le a \right).
$$
\end{asu}

\begin{rema}
Observe that nearly all common continuous distributions satisfy this assumption (exponential, normal, Pareto, etc.). An example of distribution for which the assumption is not fulfilled is e.g.\ one with the density
$$
f(x)= \begin{cases}
\frac12 e^{-|x|}, &\lf x\rf \text{ is even},\\
e^{-2|x|}, &\text{otherwise}
\end{cases}
$$
which has support on the whole $\R$.
\end{rema}
By iterating the property in Assumption~\ref{Asu} for $a\geq R_+$ one attains that for $k=1,2,\dots$
$$
\P\left(\zeta\in \left(a+(k-1)u,a+ku \right] \right) \leq C^{k-1}\P\left(\zeta\in \left(a,a+u \right] \right).
$$ 
It also follows that if we take $R_+<a<b<c$, then 
\begin{align}\label{repeated}
\P\left(\zeta\in\left(b,c\right]\right)\leq \P\left( \zeta\in \bigcup_{k=1}^{\lceil \frac{c-a}{b-a}\rceil}\left(a+(k-1)\left(b-a\right),k\left(b-a\right)\right]\right)\leq\sum_{k=1}^{\lceil \frac{c-a}{b-a}\rceil}C^{k-1} \P\left(\zeta\in(a,b]\right).
\end{align}
Using~\eqref{repeated} one can compare the probabilities of selecting a new point in the intervals of different length and/or that are not consecutive; we see that in this case the upper bound we get is a polynomial in $C$.
\begin{rema}
The assumption is somewhat related to the concept of O-regular variation (see~\cite{BGT}, page 65) in the following sense: if we let $g(x)=\P(R_+<\zeta\le R_+ + x)$ for $x>0$, then we see from~\eqref{repeated} that $\limsup_{x\to \infty} \frac{g(tx)}{g(x)}\le \sum_{k=1}^{\lceil t \rceil}C^{k-1}$ for $t\ge 1$. Therefore, $g$ is an O-regularly varying function; moreover, if the support of~$\zeta$ is $\R^+$ and $R_+=0$, then the distribution function of $\zeta$ itself is an O-regularly varying function. 
\end{rema}
Assumption~\ref{Asu} immediately implies that the tail region is free of isolated atoms; moreover, it turns out that the tail region is free of atoms altogether.
\begin{claim}\label{noatoms}
Suppose that Assumption~\ref{Asu} holds. Then $\P(\zeta=x)=0$ for every $x\in (-\infty,R_-)\cup(R_+,\infty)$.
\end{claim}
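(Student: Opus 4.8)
The plan is to argue by contradiction: suppose $\P(\zeta = x) = p > 0$ for some $x > R_+$ (the case $x < R_-$ is symmetric and handled by the second half of Assumption~\ref{Asu}). The idea is that a genuine atom at $x$ forces the probability of nearby intervals to be comparable to $p$, and iterating this along intervals of fixed length marching off to $+\infty$ produces infinitely many disjoint sets each of probability bounded below by a positive constant times $p$, contradicting $\P(\zeta \in \R) = 1$. This is morally the same mechanism as in the first half of the proof of Proposition~\ref{prop1}.

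\medskip
\noindent First I would fix $a$ with $R_+ < a < x$ and apply Assumption~\ref{Asu} with this $a$ and a suitable choice of increment. The key observation is that for any $u > 0$ the half-open interval $(x - u, x]$ contains the atom, so $\P(x - u < \zeta \le x) \ge p$ for every $u > 0$. Now take $u$ small enough that $a < x - u$, and apply the iterated bound~\eqref{repeated}, or rather its one-step form $\P(a + u < \zeta \le a + 2u) \le C\,\P(a < \zeta \le a + u)$, read backwards: it lets us transfer a lower bound on the probability of an interval to a lower bound on the probability of the adjacent interval of the same length that is \emph{closer} to $a$ — because if $\P(a + u < \zeta \le a + 2u) \ge p$ and $\P(a + u < \zeta \le a+2u)\le C\,\P(a<\zeta\le a+u)$, then $\P(a < \zeta \le a + u) \ge p/C$. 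Chaining this finitely many times from the interval containing $x$ back down toward $a$ shows $\P(a < \zeta \le a + v) \ge p/C^m > 0$ for some fixed length $v$ and fixed integer $m$ depending only on $x$, $a$ and $C$.

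\medskip
\noindent Once I have a single interval $(a, a+v]$ of positive, explicitly-bounded-below probability sitting just above $R_+$, I run Assumption~\ref{Asu} forward: the iterated inequality just below the statement of Assumption~\ref{Asu} gives $\P\big(\zeta \in (a + (k-1)v, a + kv]\big) \le C^{k-1}\,\P\big(\zeta \in (a, a+v]\big)$, but what I actually want is the \emph{reverse} comparison, and here the cleanest route is to instead apply Assumption~\ref{Asu} with base point $a + (k-1)v$ and increment $v$: it yields $\P(\zeta \in (a + (k-1)v, a+(k+1)v\ldots$ hmm — the direct consequence bounds the \emph{farther} interval by the \emph{nearer} one, which is the wrong direction for a divergent-sum argument. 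So the genuinely correct packaging is the contrapositive route already used for Proposition~\ref{prop1}: \emph{if} $\zeta$ had an atom at $x$ in the tail, pick a sequence $x_n = x + n w$ for large $w$; each ball $B_{w/4}(x_n)$ then lies in the region $(R_+, \infty)$ and, by applying Assumption~\ref{Asu} repeatedly with base points advancing by $w/2$, one shows each has probability at least $c(w) > 0$ times $p$ — wait, this again needs the reverse inequality.

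\medskip
\noindent \emph{Correction to the plan.} The honest formulation is: Assumption~\ref{Asu} says doubling an interval can increase its probability by at most a factor $C$; equivalently, \textbf{halving} an interval decreases probability by at most a factor $C$. So from $\P(a < \zeta \le a + v) =: q > 0$ we deduce, for the half nearer to $a$, $\P(a < \zeta \le a + v/2) \ge q/C$; iterating, $\P(a < \zeta \le a + v\,2^{-j}) \ge q\,C^{-j} > 0$ for all $j$. Letting $j \to \infty$, $\bigcap_j (a, a + v\,2^{-j}] = \emptyset$ if $a \notin$ the set — more usefully, $(a, a + v\,2^{-j}] \downarrow \emptyset$, so by continuity of measure $\P(\zeta \in (a, a+v\,2^{-j}]) \to 0$, contradicting the uniform lower bound $q\,C^{-j}$ \emph{only if} $C^{-j}$ does not decay, which it does. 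So that is still not a contradiction. The \emph{actual} contradiction comes from running it the other way on \emph{disjoint} intervals: from $\P(a<\zeta\le a+v)=q$, Assumption~\ref{Asu} with base $a$, increment $v$ gives an \emph{upper} bound on $(a+v, a+2v]$; but with base $a+v$, increment $v$ it gives an upper bound on $(a+2v,a+3v]$ in terms of $(a+v,a+2v]$, and so on — all upper bounds, consistent with a summable series. Hence Assumption~\ref{Asu} alone does \emph{not} forbid atoms by a crude packing argument; the atom must be ruled out by the \textbf{lower} direction. Concretely: an atom of mass $p$ at $x$ means $\P(x - u < \zeta \le x) \ge p$ for all $u>0$; by Assumption~\ref{Asu} (doubling), $\P(x - 2u < \zeta \le x) \le \P(x-u<\zeta\le x) + \P(x-2u<\zeta\le x-u) \le (1+C)\P(x-u<\zeta\le x - u/1)$ — no. The cleanest correct argument: apply Assumption~\ref{Asu} with $a = x - 2u$ and increment $u$, getting $\P(x-u<\zeta\le x) \le C\,\P(x-2u<\zeta\le x-u)$, hence $\P(x-2u<\zeta\le x-u)\ge p/C$ for \emph{every} $u>0$; now the intervals $(x - 2^{-j+1}u_0,\, x - 2^{-j}u_0]$, $j\ge 1$, are disjoint and each has probability $\ge p/C$ (taking $u = 2^{-j}u_0$ in the previous bound), so $\P(\zeta \in \R) \ge \sum_{j\ge 1} p/C = \infty$, the desired contradiction. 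The main obstacle, as the above groping shows, is simply \emph{getting the direction of Assumption~\ref{Asu} right}; once the disjoint dyadic intervals $(x - 2^{-j+1}u_0, x - 2^{-j}u_0]$ approaching $x$ from the left are set up, each inheriting mass $\ge p/C$ from the atom via one application of the doubling inequality, the divergent-sum contradiction is immediate.
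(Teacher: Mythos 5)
Your final argument is correct, though the earlier paragraphs are just discarded attempts and should be excised; what remains is a valid proof that differs from the paper's. You use the atom to get $\P(x-u<\zeta\le x)\ge p$ for all small $u>0$, apply Assumption~\ref{Asu} with $a=x-2u$ and increment $u$ to deduce $\P(x-2u<\zeta\le x-u)\ge p/C$, and then note that the disjoint dyadic intervals $(x-2^{1-j}u_0,\,x-2^{-j}u_0]$, $j\ge 1$, each carry mass at least $p/C$, giving a divergent sum. Two small patches are needed: restrict to $u\le(x-R_+)/2$ so that $a=x-2u\ge R_+$ (so ``for every $u>0$'' should be ``for every $u\in(0,(x-R_+)/2]$''), and dispose of $C=0$ separately, since then Assumption~\ref{Asu} already forces $\P(x-u<\zeta\le x)=0$, an immediate contradiction, and otherwise $p/C$ makes sense. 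The paper's route is different: it uses continuity of measure to pick $N$ with $\P(\zeta\in(x-1/N,x))\le\frac{p}{2C}$ and then derives a contradiction from a single application of Assumption~\ref{Asu} with $a=x-1/N$, $u=1/(2N)$, which forces $p\le p/2$. Your divergent-sum route, echoing the first half of the proof of Proposition~\ref{prop1}, replaces the continuity-of-measure step with countable additivity over disjoint shells approaching the atom; both arguments are short and correct, so the choice is a matter of taste, but you should trim the false starts before writing it up.
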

\begin{proof}
Assume to the contrary that there exists $x\in (-\infty,R_-)\cup(R_+,\infty)$ such that $\P(\zeta=x)>0$. Since $\P(\zeta=x)=\P\left(\bigcap_{n=1}^\infty \{ \zeta \in (x-\frac{1}{n},x]\}\right)$, by continuity of probability it follows that there exists $N$ such that $\P(\zeta \in (x-\frac{1}{N},x])\le \left(\frac{1}{2C} +1\right)\P(\zeta=x)$ which implies that $\P(\zeta \in (x-\frac{1}{N},x))\le \frac{1}{2C}\P(\zeta=x)$. Therefore we have
$$
\P\left( \zeta \in \left(x-\frac{1}{2N},x-\frac{1}{N}\right] \right) \le \P(\zeta \in (x-\frac{1}{N},x)) \le \frac{1}{2C}\P(\zeta=x) \le \frac{1}{2C}\P\left( \zeta \in (x-\frac{1}{2N},x] \right),
$$
which contradicts Assumption~\ref{Asu}.
\end{proof}

\begin{thm}\label{t}
Suppose $K=1$ and $\zeta$ satisfies Assumption~\ref{Asu} for some $R_+$ and $R_-$. Then $\XX'\not\to\infty$ a.s.\ and consequently by Theorem~\ref{t1} we have $F(t)\to 0$ a.s. Additionally,
$$
 \left\{ \liminf_{t\to\infty} x_{(1)}(t)> R_+\right\}
\bigcup
 \left\{\limsup_{t\to\infty} x_{(N-1)}(t)< R_- \right\}
\subseteq 
 \left\{\XX'(t) \to \phi\text{ for some }\phi\right\}
$$
except perhaps a set of measure $0$.
Finally, if $R_->R_+$, then $\P\left(\XX'(t) \to \phi\text{ for some }\phi\right)=1.$
\end{thm}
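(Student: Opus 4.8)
The plan is to prove the three assertions in order: first that $\XX'(t)\not\to\infty$ a.s., then the inclusion of the two ``eventually in the tail'' events into the convergence event, and finally the $R_->R_+$ case, which is essentially immediate from the first two together with a covering argument. Throughout we work with $K=1$, so $\XX'(t)$ is obtained by deleting the point farthest from the barycentre (Lemma~1), and we already know from Theorem~\ref{t1} that a.s.\ either $F(t)\to 0$ or $\XX'(t)\to\infty$; hence the whole difficulty is to rule out escape to infinity (with positive probability), and on the complement of escape we get $D(t)\to 0$ for free.

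First I would show $\XX'(t)\not\to\infty$. By symmetry (replacing $\zeta$ by $-\zeta$, swapping $R_+$ and $R_-$) it suffices to rule out $\XX'(t)\to+\infty$. Suppose, for contradiction, $\P(\XX'(t)\to+\infty)>0$. On this event $D(t)\to 0$, so from some time on the whole core lies in an interval of length $\to 0$ located to the right of $R_+$, and the core is updated by comparing, for each new $\zeta_{t+1}$, whether the farthest-from-barycentre point among the old core is closer or farther than the candidate. The key point is that once the core is a tiny cluster near a location $a\gg R_+$ and $\zeta_{t+1}$ lands in, say, the interval $(a-2D(t)\sqrt{N-2},\,a)$ just to the \emph{left} of the cluster, then the new point enters the core and the rightmost old point is ejected, so $\min\XX'$ does \emph{not} increase and in fact the cluster's left endpoint can only move left or stay; by Assumption~\ref{Asu} (in the form~\eqref{repeated}) the conditional probability, given that $\zeta_{t+1}$ falls in the window of width $C D(t)\sqrt{N-2}$ around the cluster that is relevant for possibly entering the core, of landing in that left sub-window rather than to the right, is bounded below by a constant $\rho>0$ depending only on $C$ and $N$. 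Comparing with the competing event that $\zeta$ lands to the right and pushes the cluster right, one sees that each time a newly sampled point is accepted into the core, with probability at least $\rho$ the left endpoint fails to advance, while it can advance by at most $O(D(t))$ and $\sum_t D(t)$ need not be finite — so to make $\min\XX'(t)\to\infty$ we need infinitely many accepted updates, and the left endpoint must advance on all but finitely many of them; but an independent-events / Borel–Cantelli argument (the relevant randomness $\zeta_{t+1}$ is independent of $\F_t$) shows that a.s.\ infinitely often a ``left'' sample is accepted, contradicting advancement. This is the main obstacle: making precise the bookkeeping that ``accepted samples that land on the far side from the direction of escape occur infinitely often and each such occurrence halts the drift.'' I expect to organize it exactly as in the proof of Theorem~\ref{t1}: define stopping times $\tau_k$ at which the core lies in a prescribed shrinking tail window, define events $A_k$ that the left endpoint has advanced past a threshold, and show $\P(A_k\mid \bigcap_{s<k}A_s)\le 1-\rho$, whence $\P(\bigcap_{k\ge k_0}A_k)=0$.

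For the inclusion, assume we are on $\{\liminf_t x_{(1)}(t)>R_+\}$ (the other case is symmetric). Since we have just shown $\XX'(t)\not\to\infty$ a.s., Theorem~\ref{t1} gives $F(t)\to 0$, hence $D(t)\to 0$ and, by Lemma~\ref{lem3D}, $\dist(\XX'(t),\XX'(t+1))\to 0$. It remains to exclude the core oscillating between two disjoint intervals $(-\infty,a)$ and $(b,\infty)$ with $R_+<a<b$; define $E_{a,b}$ as in Corollary~\ref{csing}. On $E_{a,b}$, by Claim~\ref{noatoms} the point masses have been excluded from $(R_+,\infty)$, but that alone is not enough — what I would use instead is that on $E_{a,b}$ the core must pass infinitely often through the ``shell'' $(a,b)$, i.e.\ have all its points in $(a,b)$ infinitely often (since $D(t)\to 0$ prevents jumping over $(a,b)$), and then run the shell argument of Theorem~\ref{tmulti}/Lemma~\ref{LyapDecreaseMulti}: the interval $(a,b)$, being a compact subset of $(R_+,\infty)\cap\supp\zeta$ where Assumption~\ref{Asu} forces a uniform regularity estimate of the type in Definition~\ref{regularsubset} (this is where Assumption~\ref{Asu}, via~\eqref{repeated}, replaces Assumption~\ref{Asumulti}), lets $F$ contract by a definite factor with a definite probability on each crossing, and a Hoeffding bound identical to Lemma~\ref{lemAmMulti} gives that after sufficiently many crossings the core cannot return to one side, contradiction. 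So $\P(E_{a,b})=0$ for all rational $a<b$ in $(R_+,\infty)$, and since $\liminf x_{(1)}>R_+$ the core is eventually trapped to the right of some such $a$; together with $D(t)\to0$ this forces convergence to a single point.

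Finally, if $R_->R_+$ then for every $\omega$ the core is eventually either entirely $>R_+$ or entirely $\le R_-$: indeed $D(t)\to 0$ on the full-measure event where $\XX'(t)\not\to\infty$, so the core cannot straddle the interval $[R_+,R_-]$ (of positive length) forever, and if it crossed $[R_+,R_-]$ infinitely often we would again get a shell argument contradiction on the compact regular set $[R_+,R_-]\cap\supp\zeta$. Hence a.s.\ either $\liminf x_{(1)}(t)\ge R_-> R_+$ or $\limsup x_{(N-1)}(t)\le R_+< R_-$, and in both cases the previous inclusion applies (up to the trivial strict-versus-weak inequality, absorbed by passing to a slightly larger/smaller rational bound), giving $\P(\XX'(t)\to\phi\text{ for some }\phi)=1$. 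The only genuinely new work is the escape-to-infinity step; everything else is a re-run of the machinery already developed in Sections~\ref{sec-shrink} and~\ref{sec-core}, with Assumption~\ref{Asu} supplying the regularity input in the tails that Assumption~\ref{Asumulti} supplied globally.
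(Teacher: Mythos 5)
Your plan for the first assertion ($\XX'(t)\not\to\infty$ a.s.) contains a genuine gap, and it is the crux of the theorem. You argue that by Assumption~\ref{Asu} a newly accepted point lands ``on the left'' with conditional probability bounded below by some $\rho>0$, and that a Borel--Cantelli argument then shows a.s.\ that the left endpoint fails to advance infinitely often, which you claim contradicts $\min_{x\in\XX'(t)}x\to\infty$. It does not: escape to $+\infty$ requires only a net positive drift, not that the left endpoint advance at all but finitely many accepted steps. A random walk with bounded increments that steps left infinitely often can still diverge to $+\infty$ (advance by $1$, retreat by $1/2$, repeat), and Assumption~\ref{Asu} only makes left and right acceptance probabilities \emph{comparable} (within a factor polynomial in $C$), not balanced. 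Identifying stopping times and showing $\P(A_k\mid\cap_{s<k}A_s)\le 1-\rho$ as in Theorem~\ref{t1} works there because $F$ is monotone non-increasing and a fixed decrement kills an event for good; the quantity $\min\XX'(t)$ or $\mu'(t)$ is not monotone, so a non-advance at time $\tau_k$ can be undone before $\tau_{k+1}$. The paper circumvents exactly this by constructing the non-negative supermartingale $h_c=\sqrt{F}+c\,\mu'$ (Lemma~\ref{superm}), a carefully balanced Lyapunov function whose key feature is that when a rightward-accepted sample increases $\mu'$, the accompanying decrease of $\sqrt{F}$ is large enough, for an appropriate small $c>0$, to make the conditional expectation of $\Delta h$ non-positive; this requires the full case analysis ($L_1,B_0,B_1,R_0$, Claim~\ref{claimF}, the bounds on $\Delta_0,\Delta_1$). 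Your proposal contains no analogue of this cancellation. Also, your opening step asserts $D(t)\to 0$ on the escape event; this is not implied by Theorem~\ref{t1}, which only guarantees $F(t)\to 0$ \emph{or} escape, not both.

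There is a second gap in your treatment of the inclusion. You propose to show that any compact interval $(a,b)\subset(R_+,\infty)$ is regular in the sense of Definition~\ref{regularsubset} and then run the shell argument of Theorem~\ref{tmulti}. But Assumption~\ref{Asu} is a one-sided doubling condition in the right tail: for $a\ge R_+$ it upper-bounds mass of far intervals in terms of near ones, which prevents the mass from increasing going right, but it does not bound the mass of the left edge of a ball $B_r(x)$ in terms of the mass of the middle. Regularity of $[R_+,R_-]$ (which the paper \emph{does} use, in the third part) works only because on that interval both the $R_+$ and $R_-$ conditions apply, giving control from both sides: one gets $\P(I_2)\ge C^{-1}\P(I_1)$ and $\P(I_2)\ge C^{-1}\P(I_3)$ by applying the two conditions with shifted bases, hence $\P(I_2)/\P(\text{whole})\ge 1/(1+2C)$. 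On $(R_+,\infty)$ alone you only get one of the two inequalities, and the middle can be a vanishingly small fraction of the whole ball. The paper's actual proof of the inclusion avoids this: it again invokes Lemma~\ref{superm} to deduce that $h_c(\gamma_{k,t,1/n})$ converges a.s., hence $\lim\mu'(t)$ exists on the relevant event, which combined with $F(t)\to 0$ gives convergence of the core. Your third part (the $R_->R_+$ case, via regularity of $[R_+,R_-]\cap\supp\zeta$ and Proposition~\ref{propreg}) is in the right spirit and matches the paper, but it relies on the second part, which in your version rests on the unproved regularity claim.
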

\begin{rema}
The last part of Theorem~\ref{t} applies to many distributions for which $\supp\zeta=\R$, e.g.\ to normal, Laplace or Cauchy distribution (one can take $R_+=-1$ and $R_-=+1$).
\end{rema}
\begin{proof}
We begin with the first statement of the theorem. Given some $L\geq1$, from now on assume that $A_L=\left\{\sqrt{2F(0)}<\frac{L}{2}, |\zeta_{0;k}|<L,k=1\dots N\right\}$ occurs, this will imply that $D(t)\leq \frac{L}{2}$ for all $t$. Since the distance between any two points in the core at time $t$ is bounded by $D(t)$, it follows that if one core point diverges to $+\infty$ so must all the other points, similarly if one of the points diverges to $-\infty$ so must all of the rest. Therefore it is enough to show that $\P\left(\left\{\XX'(t) \to +\infty\right\} \bigcup \left\{\XX'(t) \to -\infty \right\}\right)=0$. We shall prove now that $\XX'(t)\not \to +\infty$ a.s.; the proof that $\XX'(t)\not \to -\infty$ a.s. is completely analogous. 

Let $\pi_a=\inf\{t: \sqrt{2F(t)}<\frac{a}{2}\}$, $\eta_{1,a}=\tau_{1,a}=\pi_a$ and for $k>1$ let 
\begin{align*}
\tau_{k,a}&=\inf\left\{t>\eta_{k-1,a}:\ x_{(1)}(t)>R_+ +a\right\},
\\
\eta_{k,a}&=\inf\left\{t>\tau_{k,a}:\ \ \ \ x_{(1)}(t)<R_+ +a\right\},
\\
\gamma_{k,t,a}&=\min\left(\eta_{k,a},\tau_{k,a}+t\right),
\end{align*}
where $x_{(1)}(t)$ denotes the left-most point of the core at time $t$. If $\tau_{k,a}=\infty$ for some $k$, then we set 
$\eta_{m,a}=\tau_{m,a}=\infty$ for all $m\geq k$. 
It is obvious that on $A_L$, $\pi_L=0$. Furthermore, 
$$
\{\tau_{k,L}=\infty\}\cap\{\eta_{k-1,L}<\infty\}\subseteq\{\limsup_{t\to\infty}x_{(1)}(t)\le R_+ +a\}\subseteq \{\XX'(t)\not \to +\infty\}.
$$

Let $C_{k}= \left\{ \eta_{k,L} <\infty \right\}$ and note
\begin{align*}
\left(\bigcap_{k=2}^\infty C_{k} \right)&\subseteq 
\left\{\XX'(t) \subseteq B_{R_++2L}(0)\hspace{1 mm} i.o. \right\}\subseteq \left\{ \XX'(t)\not \to +\infty \right\}.
\end{align*}
Since $\left(\bigcap_{k=1}^\infty C_{k} \right) \subseteq \left\{ \XX'(t)\not \to +\infty \right\}$, if we could also show that
\begin{align}\label{C_k}
&\P\left(\left(\bigcap_{k=2}^\infty C_{k} \right)^c \cap \left\{ \XX'(t) \to +\infty \right\}\right)=\P\left(\left(\bigcup_{k=2}^\infty \left\{ \eta_{k,L}=\infty \right\} \right) \bigcap \left\{ \XX'(t)\to +\infty \right\}\right)=0,
\end{align}
then it would follow that $ \P\left( A_L\bigcap\left\{ \XX'(t)\not \to +\infty\right\} \right) = \P(A_L)$ and since $\P\left(\bigcup_{L=1}^\infty A_L\right)=1$, it would then follow from continuity of probability that $\P\left( \XX'(t) \to +\infty \right)=0$.

Now we will show that
$\P\left( \left\{\eta_{k,L}=\infty \right\} \bigcap \left\{ \XX'(t)\to +\infty \right\}\right)=0$ for every $k>1$ which will establish $\eqref{C_k}$. 
For this purpose (and for the purpose of showing the other statements of the theorem), we will need the following lemma
\begin{lemma}\label{superm}
For some fixed $k>1$ and $a>0$, let
$$
h_c(s)=\left(\sqrt{F(s)}+c\left[\mu'(s)+\max(0,-R_+)\right]\right)I_{ A_L}.
$$ 
Then there exists $c>0$ such that $\lim_{t\to\infty} h_c(\gamma_{k,t,a})$ exists a.s. on $\tau_{k,a}<\infty$.
\end{lemma}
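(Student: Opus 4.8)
The plan is to exhibit a constant $c>0$ for which the stopped process $h_c(\gamma_{k,t,a})$, $t=0,1,2,\dots$, is a non-negative supermartingale with respect to the natural filtration $\F_{\tau_{k,a}+t}$ on the event $\{\tau_{k,a}<\infty\}$; then its almost-sure convergence follows from the martingale convergence theorem. The indicator $I_{A_L}$ together with the definition of $A_L$ guarantees $D(t)\le L/2$ for all $t$, so all increments we need to control are uniformly bounded; this makes the conditional expectations finite and also lets us ignore the regime where the core is already ``small''. Because $\gamma_{k,t,a}=\min(\eta_{k,a},\tau_{k,a}+t)$ freezes the process once the left-most core point $x_{(1)}$ drops back below $R_++a$, it suffices to verify the supermartingale inequality on the set where $\tau_{k,a}\le s<\eta_{k,a}$, i.e.\ when $x_{(1)}(s)>R_++a\ge R_+$, so that the entire core lies in the tail region where Assumption~\ref{Asu} is available (since the core has diameter $\le L/2$, one can if necessary increase $a$ so that $x_{(1)}(s)>R_++a$ forces every core point to exceed $R_+$; this only shrinks the event $\{\tau_{k,a}<\infty\}$ we are proving the claim on, or one simply keeps $a$ as a free parameter as the statement does).

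The heart of the argument is a one-step drift estimate: conditionally on $\F_s$ with $s$ in the above regime, I would show
$$
\E\bigl[\sqrt{F(s+1)}-\sqrt{F(s)}\,\big|\,\F_s\bigr]\le c\,\kappa\,\sqrt{F(s)}
\quad\text{and}\quad
\E\bigl[\mu'(s+1)-\mu'(s)\,\big|\,\F_s\bigr]\ge -\kappa'\sqrt{F(s)}
$$
for suitable constants $\kappa,\kappa'>0$ depending only on $N$, and then choose $c$ small enough that $c\kappa<\kappa'$, so that the two contributions combine to a non-positive conditional drift of $h_c$. The upper bound on the $\sqrt F$ increment is essentially Lemma~\ref{lem1} (monotonicity of $F$) sharpened by the observation that with positive probability — bounded below using Assumption~\ref{Asu} and the comparison inequality~\eqref{repeated} — the newly sampled point lands close enough to the core to cause a genuine multiplicative decrease of $F$ (cf.\ the mechanism in Lemmas~\ref{lem3} and~\ref{LyapDecreaseMulti}), while $F$ never increases; this gives $\E[\sqrt{F(s+1)}\mid\F_s]\le(1-\text{const})\sqrt{F(s)}$, hence a negative drift of order $\sqrt{F(s)}$ from the first term. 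The bound on the $\mu'$ increment uses that $|\mu'(s+1)-\mu'(s)|$ is controlled by $D(s)$ and $D(s+1)$, both $\le\sqrt{2F(s)}$ by~\eqref{eqDFD}, so the drift of $\mu'$ cannot be more negative than $-\kappa'\sqrt{F(s)}$; combining, $\E[h_c(\gamma_{k,s+1,a})-h_c(\gamma_{k,s,a})\mid\F_s]\le(c\kappa-\kappa'+\text{(loss term)})\sqrt{F(s)}\le0$ for $c$ small.

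The main obstacle I anticipate is the drift bound on $\sqrt{F}$: one must quantify, uniformly over admissible core configurations in the tail, the probability that the resampled point is accepted and drags $F$ down by a fixed factor, and this probability must be bounded below by a constant independent of the (possibly tiny) current value of $F$. Unlike the compact-support setting of Lemma~\ref{LyapDecreaseMulti}, here the core may be drifting to $+\infty$, so one cannot invoke a fixed regular set; instead Assumption~\ref{Asu} (via the iterated inequality~\eqref{repeated}) must be used to transfer a lower bound on $\P(\zeta\in(a,b])$ to a lower bound on $\P(\zeta\in$ a nearby short interval at scale $\sqrt{F(s)}\,)$, and one needs to check this survives translation arbitrarily far into the tail — this is exactly the role of the ``at most exponential oscillations'' hypothesis, and keeping the constants uniform is the delicate point. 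A secondary, more routine, technicality is handling the tie-breaking when the core is not unique and verifying measurability of $\tau_{k,a},\eta_{k,a},\gamma_{k,t,a}$, but these are standard.
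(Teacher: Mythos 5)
Your high-level strategy---exhibit a $c>0$ for which $h_c(\gamma_{k,t,a})$ is a non-negative supermartingale and then invoke the supermartingale convergence theorem---is the same as the paper's. However, the central one-step drift estimate you rely on,
$$
\E\bigl[\sqrt{F(s+1)}\,\big|\,\F_s\bigr]\le(1-\mathrm{const})\sqrt{F(s)},
$$
with a constant depending only on $N$, is \emph{false} under Assumption~\ref{Asu} alone, and the proof cannot proceed as sketched. Assumption~\ref{Asu} is purely a \emph{ratio} condition between consecutive tail intervals; it gives no absolute lower bound on the mass $\zeta$ places near the current core. Take $\zeta\sim\mathrm{Exp}(1)$ (which satisfies Assumption~\ref{Asu}) and let the core sit near a large $x$ with diameter $D$: the probability that $\zeta$ lands within distance $O(D)$ of the core---hence the probability of \emph{any} change to the core---is of order $e^{-x}D$, which tends to $0$ as $x\to\infty$. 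So no $\kappa>0$ depending only on $N$ can make $\E[\sqrt{F(s+1)}-\sqrt{F(s)}\mid\F_s]\le-\kappa\sqrt{F(s)}$ hold uniformly, and the attempted decoupling of the two drift contributions (negative from $\sqrt F$, positive from $c\mu'$) by a small fixed $c$ collapses. You correctly flag ``keeping the constants uniform'' as the delicate point, but the fix you propose---using \eqref{repeated} to obtain an absolute lower bound on the acceptance probability at scale $\sqrt{F(s)}$---is exactly what Assumption~\ref{Asu} does not supply.

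What the paper does instead is work entirely in units of $D=D(\gamma_t)$, introducing the normalized variables $\zeta',F_o,F_n,\mu'_o,\mu'_n$, and noting that Assumption~\ref{Asu} is scale-invariant and so survives this rescaling (inequalities \eqref{conditionalineq} and \eqref{conditionalrepeated}). Both the helpful event (new point accepted and $F$ drops by a definite factor) and the harmful event (new point accepted to the right so that $\mu'$ increases) have probabilities that shrink together far out in the tail; only their \emph{ratio} is controlled, via the constants $C_1,\dots,C_4$, each a quotient of $\P(\zeta'\in\cdot)$ over comparable intervals and bounded by a polynomial in $C$. The normalized drift $\E[\Delta h\mid\F]$ is then split into terms $(I)$, $(II)$, $(III)$ according to the position of $\mu'_o$ and which point ($0$, $1$, or $\zeta'$) is ejected (events $L_1,B_0,B_1,R_0$), and each term takes the form $[\mathrm{const}_1\,c-\mathrm{const}_2]\cdot\P(\text{some }\zeta'\text{-event})\le 0$ once $c$ is chosen small relative to an inverse polynomial in $C$. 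It is this relative comparison and the accompanying case analysis---not a uniform lower bound on the probability of a decrease---where Assumption~\ref{Asu} actually enters, and that is the piece missing from your sketch.
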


\begin{proof}[Proof of Lemma~\ref{superm}.]
We will show that $h_c(\gamma_{k,t,a})$ is a non-negative supermartingale with respect to $\{\F_{\gamma_{k,t,a}}\}_{t\ge 0}$, and then the result will follow from the supermartingale convergence theorem. In order to make notations less cluttered from now on we set $\gamma_t=\gamma_{k,t,a}$ throughout the proof of this lemma.
First, observe that the positivity of $h_c(\gamma_{t})$ is ensured by the term $c\max(0,-R_+)$, and by the definition of $\gamma_{t}$ and $\pi_a$. Therefore, from now on we can assume that $R_+\ge 0$ without loss of generality. We have
\begin{align*}
&\E |h_c(s)| \le \E\left[ \left(\sqrt{F(s)} +c|\mu'(s)| \right)I_{A_L} \right] \leq \E\left[ \left(\sqrt{F(0)}+c \left(|\mu'(0)|+\sum_{l=1}^s |\mu'(l)-\mu'(l-1)|\right) \right)I_{A_L} \right]\\
&\le \E\left[ \left(\frac{L}{2\sqrt{2}}+c \left(|\mu'(0)|+\sum_{l=1}^s D(l)\right) \right)I_{A_L} \right]\leq 
\E\left[ \left(\frac{L}{2\sqrt{2}} + c\left(L +\sum_{l=1}^s \sqrt{2F(l)}\right) \right)I_{A_L} \right]\\
&\leq \E\left[ \left(L + c\left(L +s\sqrt{2F(0)}\right) \right)I_{A_L} \right]
\leq L\left( 1 +c\left(1+s/2\right) \right) < \infty,
\end{align*}
where we used Lemma~\ref{lem2}, the fact that $|\mu'(0)|\le \max_{x\in \XX'(0)} |x|\le L$, $|\mu'(s+1)-\mu'(s)|\le D(s+1), s\ge 0$, and that $F$ is non-increasing. 
Hence $\E |h_c(s)|<\infty$.

Since $\{\gamma_{t}<\eta_{k,a}\} \in \F_{\gamma_{t}}$, we have 
\begin{align*}
&\E\left[h_c(\gamma_{t+1})-h_c(\gamma_{t})\| \F_{\gamma_{t}}\right] = \E\left[\left( h_c(\gamma_{t+1})-h_c(\gamma_{t})\right) \left( I_{\gamma_{t}=\eta_{k,a}}+I_{\gamma_{t}<\eta_{k,a}} \right)\| \F_{\gamma_{t}}\right]\\ 
&=\E\left[\left( h_c(\gamma_{t}+1)-h_c(\gamma_{t})\right) I_{\gamma_{t}<\eta_{k,a}} \| \F_{\gamma_{t}}\right]=\E\left[ h_c(\gamma_{t}+1)-h_c(\gamma_{t}) \| \F_{\gamma_{t}}\right]I_{\gamma_{t}<\eta_{k,a}}\\
&\le \max\left(0,\E\left[\left( h_c(\gamma_{t}+1)-h_c(\gamma_{t}) \right) \| \F_{\gamma_{t}}\right]\right)I_{\gamma_{t}<\eta_{k,a}}
\\
&\le \max\left(0,\E\left[\left( h_c(\gamma_{t}+1)-h_c(\gamma_{t}) \right) \| \F_{\gamma_{t}}\right]\right).
\end{align*}
It will suffice now to show that $\E(h(\gamma_{t}+1)-h(\gamma_{t})\| \F_{\gamma_{t}})\le 0$ a.s.
Since $\gamma_{t}\le \eta_{k,a}$, we can deduce 
\begin{align}\label{leftborder}
x_{(1)}(\gamma_{t})\ge x_{(1)}(\eta_{k,a})\ge x_{(1)}(\eta_{k,a}-1) -D(\eta_{k,a}-1)>R_+ +a-\sqrt{2F(\pi_a)}>R_+ +\frac a2.
\end{align}
The above inequalities show that all the core points lie to the right of $R_+$ at time $\gamma_t$, since this region is free of atoms, we can conclude that $D(\gamma_{t})>0$ a.s.. Recall that the points of the core at time $\gamma_{t}$ are ordered as $x_{(1)}(\gamma_{t})\leq ...\leq x_{(N-1)}(\gamma_{t})$, and let $\zeta=\zeta_{ \gamma_{t}+1}$. 

Let us introduce some new variables, where we drop the time indices for the sake of brevity:
\begin{equation*}
\begin{array}{rlrl}
D&=D(\gamma_{t}), &
\F &=\F_{\gamma_{t}},
\\[1mm] 
y_k & = \frac{\displaystyle x_{(k)}(\gamma_{t})-x_{(1)}(\gamma_{t})}{\displaystyle D}, &
\zeta'&=\frac{\displaystyle \zeta-x_{(1)}(\gamma_{t}) }{\displaystyle D}, \\[1mm]
F_o&=\sqrt{F\left(\left\{y_1,\cdots,y_{N-1}\right\}
\right)}, &
F_n&=\sqrt{F\left(\left\{y_1,\cdots,y_{N-1},\zeta'\right
\}'\right)}, \\
\mu'_o&=\mu\left(\left\{y_1,\cdots,y_{N-1}\right\}\right),
&
\mu'_n&=\mu\left(\left\{y_1,\cdots,y_{N-1},\zeta'\right
\}'\right).
\end{array}
\end{equation*}
At time $\gamma_{t}$ the transformed core consists of the new points $(y_1,\dots,y_k)$ such that $0=y_1\leq\cdots\leq y_{N-1}=1$.
Notice that we will always reject $\zeta'$ if $\zeta'<-1$ but this is equivalent to $\zeta<x_{(1)}(\gamma_{t})-D$ which is bounded below by $x_{(1)}(\gamma_{t})-\frac a2$, by \eqref{leftborder} this is strictly larger than $R_+$ so we can conclude that $\zeta$ is accepted into the core only if it lies to the right of $R_+$. Furthermore, if $a>-1$, then since $\zeta$ is independent of $\F$, it follows that
\begin{align}\label{conditionalineq}
\nonumber &\P\left(\zeta'\in (a+u,a+2u] \right)= \P\left(\zeta\in \left((a+u)D+x_{(1)}(\gamma_{t}),(a+2u)D+x_{(1)}(\gamma_{t}) \right] \right)
\\
&\le C\P\left(\zeta\in \left(aD+x_{(1)}(\gamma_{t}),(a+u)D+x_{(1)}(\gamma_{t}) \right] \right)=C\P\left(\zeta'\in (a,a+u] \right),
\end{align}
hence Assumption~\ref{Asu} translates to $\zeta'$. If we combine~\eqref{conditionalineq} with the same type of argument as in~\eqref{repeated} we see that if $-1<a<b<c$, then
\begin{align}\label{conditionalrepeated}
\P\left(\zeta'\in\left(b,c\right]\right)\leq\sum_{k=1}^{\lceil \frac{c-a}{b-a}\rceil}C^{k-1} \P\left(\zeta'\in(a,b]\right).
\end{align}
Due to the translation invariance of $\sqrt{F}$ and $\mu$ we have 
\begin{align*}
\mu'(\gamma_{t}+1)-\mu'(\gamma_{t})&=D\left(\mu'_n-\mu'_o\right),\\
F\left(\gamma_{t}+1\right)-F\left(\gamma_{t}\right)&=D\left(\sqrt{F_n} -\sqrt{F_o}\right),
\end{align*}
implying 
$$
\frac{1}{D}\left(h(\gamma_{t}+1)-
h(\gamma_{t})\right)=
\sqrt{F_n}-\sqrt{F_o}+c\left(\mu'_n-\mu'_o\right).
$$
Denote $\Delta h=\sqrt{F_n}-\sqrt{F_o}+c\left(\mu'_n-\mu'_o\right)$; since $D>0$ a.s., it follows that
$\E\left[\left(h(\gamma_{t+1})-
h(\gamma_{t})\right) \| \F \right]\leq 0
$ is equivalent to $\E\left[\Delta h \| \F \right]\leq 0$.

If the new point $\zeta$ is sampled, then either $0$, $1$ or $\zeta'$ is eliminated in the next step. There are 4 different cases, either $\zeta'<0$, $\zeta'\in(0,1)$, $\zeta'>1$ (recall that $\zeta$ has no atoms under Assumption~\ref{Asu}).
The new centre of mass for the whole configuration is thus
$$
\mu_n=\frac{\zeta'+M\mu'_o}{M+1},\quad\text{ where } M=N-1.
$$ 
If the point $0$ is eliminated, then centre of mass of the new core is $\mu'_n=\frac{\zeta'} M+u'_o $. If the point~$1$ is eliminated, then $\mu'_n=\frac{\zeta'-1} M+\mu'_o$. Note that by Claim~\ref{noatoms} our probability measure is non-atomic to the right of $R_+$ and therefore the probability of a tie between which point should be eliminated is zero; consequently, we can disregard these events.

\begin{itemize}[leftmargin=*]
\item
In the case $\zeta'<0$, only $\zeta'$ or $1$ can be eliminated. 
The point $1$ is eliminated if and only if $\mu_n-\zeta'<1-\mu_n$. This is equivalent to $\zeta'>\frac{M(2\mu'_o-1)-1}{M-1}$. So in this case the point $1$ is eliminated if and only if 
$\zeta'\in\left(\frac{M(2\mu'_o-1)-1}{M-1},0\right)$. 
Denote this event by 
$$
L_1=\left\{\min\left(\frac{M(2\mu'_o-1)-1}{M-1},0\right)
<\zeta'< 0\right\}. 
$$

\item
In the case $\zeta'\in (0,1)$, only $0$ or $1$ may be eliminated. Point  $0$ is eliminated iff $\mu_n>1-\mu_n$, which is equivalent to $\zeta'>\frac{M+1}{2}-M\mu'_o$, i.e.,
$\zeta'\in\left(\min\left(\frac{M+1}{2}-M\mu'_o,1\right),1\right)$. 
Let 
$$
B_0=\left\{ 
\min\left(\frac{M+1}{2}-M\mu'_o,1\right)<\zeta'<1\right\}.
$$ 
The point $1$ is eliminated otherwise, that is, if
$\zeta'\in(0,1) \setminus \left[\min\left(\frac{M+1}{2}-M\mu'_o,1\right),1\right]$. Let
$$
B_1=\left\{0<\zeta'<\min\left(\frac{M+1}{2}-M\mu'_o,1\right) \right\}.
$$

\item
Finally, in the case $\zeta'>1$ only $\zeta'$ or $0$ can be eliminated. The point $0$ is eliminated iff $\zeta'-\mu_n<\mu_n\ \Longleftrightarrow\ \zeta'<\frac{2M\mu'_o}{M-1}$, that is if $\zeta'\in \left(1,\max\left(\frac{2M\mu'_o}{M-1},1\right)\right)$. 
Let
$$
R_0=\left\{1<\zeta'<\max\left(\frac{2M\mu'_o}{M-1},1\right)\right\}.
$$
\end{itemize}
We begin with the case $M=2$. We have $\mu'_o=\frac{1}{2}$, $F_o=\frac{1}{2}$, $L_1=\{-1<\zeta'<0\}$, $B_1=\{0<\zeta'<1/2]\}$, $B_0=\{1/2<\zeta'<1\}$, $R_0=\{1<\zeta'<2\}$. If $1$ is eliminated, then $F_n=\frac{\zeta'^2}{2}$, moreover notice that in this case $\mu'_o-\mu'_n$ is non-positive. If $0$ is eliminated, then $\mu'_n=\frac{1+\zeta'}{2}.$ We have
\begin{align*}
\E(\Delta h\|\F)&
=\E\left[\left(\mu_n'-\mu_o'\right)+c\left(F_n-F_o \right) \| \F \right]
\le c\E\left[ \left( F_n-F_o \right)I_{L_1\cup B_1} \| \F \right]
\\
&+\E\left[\left(\mu'_n-\mu'_o \right) I_{R_0 \cup B_0} \| \F \right]
\le \frac c2 \E\left[ \left( \zeta'^2-1 \right)I_{B_1} \| \F\right]+\frac 12 \E\left[\zeta'I_{R_0 \cup B_0} \| \F \right]
\\
&\le \frac c2\left(\frac{1}{4}-1\right)\P\left(0<\zeta'<1/2 \right)+\frac{2}{2}\P\left(1/2<\zeta' <2\right)
\\
&\le -\frac{3}{8}c\P\left(0<\zeta'<1/2 \right) + \left(1+ C+C^2+C^3\right)\P\left(0<\zeta'<1/2 \right),
\end{align*}
where we used ~\eqref{conditionalrepeated} in the last inequality.
It is obvious that the last expression can be made negative for large enough $c>0$, as required.
\\[1mm]

Let us now consider the case $M\ge 3$. First we note that $\mu_o'\in \left(\frac{1} M,\frac{M-1} M\right)$ a.s., where the lower bound is approached as $y_2,...,y_{M-1}$ all go to 0 while the upper bound is approached as $y_2,...,y_{M-1}$ all go to~1. If we now denote by $K_0$ the event that $0$ is eliminated, and $K_1$ the event that $1$ is eliminated, then we have $K_0=R_0\cup B_0$ and $K_1=L_1\cup B_1$. Furthermore,
$$\mu'_n-\mu'_o=\frac{\zeta'} MI_{K_0}+\frac{\zeta'-1} MI_{K_1}.$$
We also have 
\begin{align*}
&F_n=
\left(F_o+\frac{M-1} M\zeta'^2-2\mu'_o \zeta'\right)I_{K_0} 
\\ 
&+
\left(F_o+\frac{M-1} M\zeta'^2-\frac{2(M\mu'_o-1)} M\zeta'+\frac{2(M\mu'_o-1)} M-\frac{M-1} M\right)I_{K_1} .
\end{align*}
Observe that $\Delta h=h_0I_{K_0}+h_1I_{K_1}$, where 
\begin{align*}
h_i&=\sqrt{F_o+\Delta_i(\zeta',\mu'_o)}+c\frac{\zeta'} M-\sqrt{F_o} \quad i=0,1;\\
\Delta_i(x,y)&=\frac 1M \cdot
\begin{cases}
(M-1) x^2-2Mxy, &i=0;\\
(M-1) (x^2-1)+2(1-x)(My-1) & i=1.
\end{cases}
\end{align*}
Using these notations we obtain
\begin{align*}
&\E\left[\Delta h \| \F \right] = \E\left[h_0I_{K_0} \| \F \right] +
\E\left[h_1I_{K_1} \| \F \right] 
\\ &=\E\left[h_0I_{R_0} \| \F \right]+\E\left[h_0I_{B_0} \| \F \right]
+\E\left[h_1I_{L_1} \| \F \right]+\E\left[h_1I_{B_1} \| \F \right]
=(I)+(II)+(III),
\end{align*}
where
\begin{align*}
(I)&=\left(\E\left[h_1I_{L_1} \| \F \right]\right)I_{\mu_o'\in\left(\frac{1} M,\frac{M-1}{2M}\right)},\\
(II)&=
\left(\E\left[h_1I_{L_1} \| \F \right]+\E\left[h_1I_{B_1} \| \F \right]+\E\left[h_0I_{R_0} \| \F \right]+\E\left[h_0I_{B_0} \| \F \right]\right)I_{\mu_o'\in\left(\frac{M-1}{2M},\frac{M+1}{2M}\right)},
\\
(III)&=\left(\E\left[h_0I_{R_0} \| \F \right]\right)I_{\mu_o'\in\left(\frac{M+1}{2M},\frac{M-1} M.\right)}
\end{align*}
(Please see Figure~\ref{Figu1} showing locations of $\zeta'$ for the events $L_1$, $B_1$, $B_0$ and $R_0$.)
\begin{figure}
\caption{Possible locations of $\zeta'$\label{Figu1}}
\setlength{\unitlength}{9mm}
\begin{picture}(13,4)(-2,0)
\put(0,1){\vector(1,0){12}}
\put(12,0.2){$\mu'$}
\put(1,3){\line(1,0){6} $\ L_1$}
\put(3,2.5){\line(1,0){6}$\ R_0$}
\put(3,2){\line(1,0){4}$\ B_0$}
\put(3,1.5){\line(1,0){4}$\ B_1$}
\put(1,1){\circle*{0.2}}
\put(3,1){\circle*{0.2}}
\put(7,1){\circle*{0.2}}
\put(9,1){\circle*{0.2}}
\put(1,0.2){$\frac 1M$}
\put(9,0.2){$1-\frac 1M$}
\put(3,0.2){$\frac12 -\frac 1{2M}$}
\put(7,0.2){$\frac12 +\frac 1{2M}$}
\multiput(1,1)(0,0.2){11}{\line(0,1){0.05}}
\multiput(3,1)(0,0.2){8}{\line(0,1){0.05}}
\multiput(7,1)(0,0.2){11}{\line(0,1){0.05}}
\end{picture}
\end{figure}
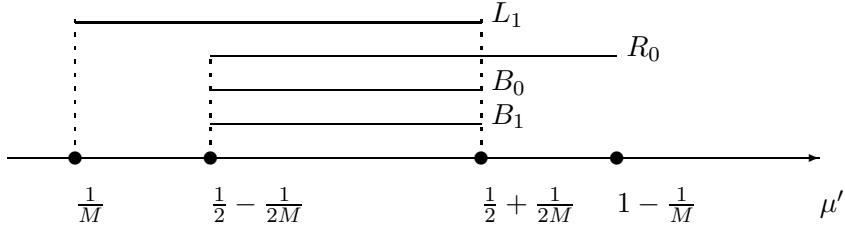
It will suffice to show that all the three terms in the expression for $\E\left[\Delta h \| \F \right]$ are non-positive. The fact that $(I)\le 0$ is obvious, since if $1$ is eliminated, then the core centre of mass must move leftwards while $F$ is always non-increasing. The second term $(II)$ is a little more complicated and requires more careful study. We illustrate the possible combinations of $\zeta'$ and $\mu_o'$ on Figure~\ref{Figu2}.
\begin{figure}
\caption{Possible combinations of $\zeta'$ and $\mu_o'$\label{Figu2}}
\setlength{\unitlength}{8mm}
\begin{picture}(20,8)(-3,0)
\put(0,2){\line(1,0){6}}
\put(3,5){\line(1,0){6}}
\put(0,2){\line(1,1){3}}
\put(6,2){\line(1,1){3}}
\put(6,2){\line(-1,1){3}}
\put(6,2){\line(0,1){3}}

\put(-0.4,0.4){$\frac{-2}{M-1}$}
\put(2.9,0.4){$0$}
\put(5.9,0.4){$1$}
\put(8.7,0.4){$\frac{M+1}{M-1}$}

\put(1.6,2.3){\small$\frac{1}{2}-\frac{1}{2M}$}
\put(1.6,5.1){\small$\frac{1}{2}+\frac{1}{2M}$}
\put(3,2){\circle*{0.1}}
\put(3,5){\circle*{0.1}}

\put(2,3){$L_1$}
\put(4,3){$B_1$}
\put(5.3,3){$B_0$}
\put(6.4,3){$R_0$}

\put(3.2,6.5){$\mu_0'$}
\put(11,0.5){$\zeta'$}

\multiput(0,1)(0,0.2){5}{\line(0,1){0.1}}
\multiput(6,1)(0,0.2){5}{\line(0,1){0.1}}
\multiput(9,1)(0,0.2){20}{\line(0,1){0.1}}

\put(3,1){\vector(0,1){6}}
\put(-0.5,1){\vector(1,0){12}}
\end{picture}
\end{figure}
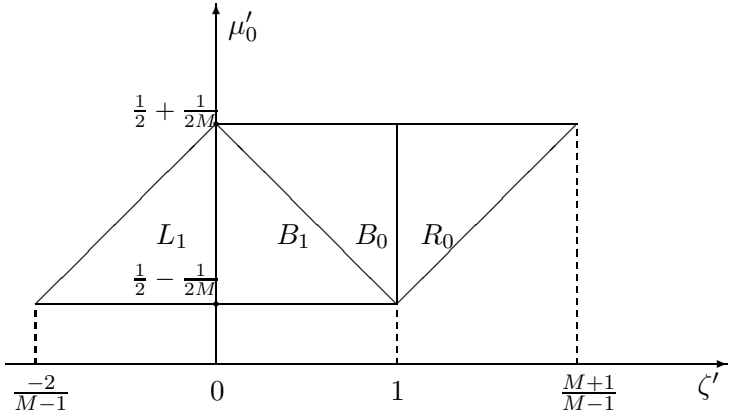
We know present the following elementary statement.
\begin{claim}\label{claimF}
Let $\Delta<0$. Then
$$
\sqrt{F_o+\Delta}-\sqrt{F_o}\le -\frac{\Delta}{2M}.
$$
\end{claim}
\begin{proof}[Proof of Claim~\ref{claimF}.]
The inequality follows from the fact that
$\sqrt{F_0}\le\sqrt{ M/2}\le M$ and the trivial inequality
$\sqrt{x+y}-\sqrt{x}\le \frac{y}{2\sqrt{x}}
$ valid for all $x>0$ and $x+y\ge 0$.
\end{proof}
Next, we find an upper bound for $\Delta_1(x,y)$ on the rectangle
$$
A_1=\left\{(x,y): \frac{M-1}{2M}\le y \le \frac{1}{2}, 0\le x \le \frac{1}{2} \right\}.
$$
The critical point for $\Delta_1(\cdot,\cdot)$ is at $(1,1)$ which falls outside $A_1$, hence we only need to study the boundary points of $A_1$ to bound the maximum of $\Delta_1$ on $A_1$. If $x=0$, then $\Delta_1=-\frac{M-1} M+\frac{2(My-1)} M\le -\frac{1} M$. If $x=\frac12$, then $\Delta_1=-\frac{3(M-1)}{4M}+\frac{My-1} M\le -\frac{(M+1)}{4M}$. If $y=\frac{M-1}{2M}$, then $\Delta_1=\frac{M-1} Mx^2+\frac{3-M} Mx-\frac{2} M\le -\frac{1} M.$ If $y=\frac12$, then $\Delta_1=\frac{M-1} M(x^2-x)+\frac{x-1} M\le -\frac{1} M$. Since $M\ge 3$, $-\frac{(M+1)}{4M} \le -\frac{1} M$, and therefore $\Delta_1\le -\frac{1} M$ on $A_1$. Combining these bounds with Claim~\ref{claimF} we get that for $\frac{M-1}{2M}\leq\mu_o'\leq \frac{1}{2}$ and $0\leq\zeta'\leq\frac{1}{2}$ (which is a subset of $B_1 \cap\{\frac{M-1}{2M} \leq\mu_o'\leq \frac{1}{2} \}$)
\begin{align}\label{ineqDelta1}
\sqrt{F_o+\Delta_1(\zeta',\mu'_o)}-\sqrt{F_o} \leq -\frac{1}{2M^2}.
\end{align}
On the other hand, if $\mu'_o \geq 1/2$ and $0\le\zeta'\le 1$, then $\Delta_0(\zeta',\mu'_o)\leq \left((M-1)/M -2\mu'_o\right)\zeta'\leq- \zeta'/M$ and therefore by Claim~\ref{claimF}
\begin{align}\label{ineqDelta0}
\sqrt{F_o+\Delta_0(\zeta',\mu'_o)}-\sqrt{F_o} &\leq -\frac{\zeta'}{2M^2}.
\end{align}
Our next task is to find an upper bound for $\Delta_0(x,y)$ on the rectangle 
$$
A_2=\left\{ (x,y): \frac{1}{2}\le y \le \frac{M+1}{2M}, 1\le x \le \frac{2M-1}{2M-2} \right\}.
$$ The function $\Delta_0(\cdot,\cdot)$ has its only critical point at $(0,0)$ which falls outside this rectangle, so again we only need to study the boundary values of the rectangle. If $x=1$, then $\Delta_0=(M-1)/M-2y\le (M-1)/M -1=-1/M$. If $x=(2M-1)/(2M-2)$, then $\Delta_0=-\frac{(4My-2M+1)(2M-1)}{4M(M-1)}=:f_1(y)$, 
and this function has a critical point at $y=\frac M{2M-2}>\frac{M+1}{2M}$ which thus lies outside of the border of $A_2$. Plugging in the endpoints we get $f_1(\frac{1}{2})=-\frac{2M-1}{4M(M-1)}\le -\frac{1}{2M}$ and $f_1(\frac{M+1}{2M})=-\frac{3(2M-1)}{4M(M-1)}\le -\frac{3}{4M}\le -\frac{1}{2M}$. If $y=1/2$, then $\Delta_0=\frac{M-1} M x^2-x\le -\frac{1}{4M}$ for all $1\le x \le \frac{2M-1}{2M-2}$. If $y=(M+1)/(2M)$, then $\Delta_0=\frac{M-1} M x^2-\frac{M+1} Mx\le -\frac{1} M$, for $1 \le x \le \frac{2M-1}{2M-2}$. As a result, we conclude that $\Delta_0\le -\frac{1}{4M}$ on $A_2$. Combining this with Claim~\ref{claimF} we get that when $\frac{1}{2}\leq\mu_o'\leq\frac{M+1}{2M} $ and $1\leq\zeta'\leq \frac{2M-1}{2(M-1)}$ (this is a subset of $R_0\cap \{\frac{1}{2}\leq\mu_o'\leq\frac{M+1}{2M}\}$)
\begin{align}\label{ineqDelta0prime}
\sqrt{F_o+\Delta_0(\zeta',\mu'_o)}-\sqrt{F_o} &\leq -\frac{1}{8M^2}.
\end{align}
We will also again make use of the fact that by definition $h_1I_{L_1}\leq 0$ and $h_1I_{B_1}\le 0$ so therefore,
\begin{align*}
\left(\E\left[h_1I_{L_1} \| \F \right]+\E\left[h_1I_{B_1} \| \F \right]\right)I_{\mu_o'\in\left(\frac{M-1}{2M},\frac{M+1}{2M}\right)}\le \E\left[h_1I_{B_1} \| \F \right]I_{\mu'_o\in\left(\frac{M-1}{2M},\frac{1}{2}\right)}.
\end{align*}

Now we make the following bounds:
\begin{align}\label{II}
(II)\nonumber&\leq
\ \E\left[h_1I_{B_1} \| \F \right]I_{\mu'_o\in\left(\frac{M-1}{2M},\frac{1}{2}\right)}
+\left(\E\left[h_0I_{R_0} \| \F \right]
+\E\left[h_0I_{B_0} \| \F \right]\right)I_{\mu'_o\in\left(\frac{M-1}{2M},\frac{M+1}{2M}\right)}
\\ \nonumber&\leq
\E\left[\left(\sqrt{F_o+\Delta_1(\zeta',\mu'_o)}-\sqrt{F_o} \right)I_{B_1} \| \F \right]I_{\mu'_o\in\left(\frac{M-1}{2M},\frac{1}{2}\right)} \\ 
\nonumber &+
\ \E\left[\left(\sqrt{F_o+\Delta_0(\zeta',\mu'_o)}-\sqrt{F_o} \right)\left(I_{B_0}+I_{R_0} \right)\| \F \right] I_{\mu'_o\in\left(\frac{1}{2},\frac{M+1}{2M}\right)} 
\\
\nonumber &+
\ \frac{c} M\left(\E\left[\zeta' I_{B_0} \| \F \right] + \E\left[\zeta' I_{R_0} \| \F \right]\right)I_{\mu'_o\in \left( \frac{M-1}{2M},\frac{M+1}{2M} \right) }
\\ &\leq
\ \E\left[\left(\sqrt{F_o+\Delta_1(\zeta',\mu'_o)}-\sqrt{F_o} \right)I_{0\leq\zeta'\leq\frac{1}{2}} \| \F \right]I_{\mu'_o\in\left(\frac{M-1}{2M},\frac{1}{2}\right)}
\\
\nonumber &+
\ \E\left[\left(\sqrt{F_o+\Delta_0(\zeta',\mu'_o)}-\sqrt{F_o} \right)\left(I_{B_0}+I_{1\leq\zeta'\leq\frac{2M-1}{2(M-1)}} \right)\| \F \right] I_{\mu'_o\in\left(\frac{1}{2},\frac{M+1}{2M}\right)}
\\
\nonumber &+
\ \frac{c} M\left(\E\left[\zeta' I_{B_0} \| \F \right] + \E\left[\zeta' I_{R_0} \| \F \right]\right)I_{\mu'_o\in \left( \frac{M-1}{2M},\frac{M+1}{2M} \right) },
\end{align}
where we used the fact that $ \{ 0<\zeta' <1/2\}\cap 
\{ \frac{M-1}{2M}<\mu'_o<1/2\} \subseteq 
\{ \frac{M-1}{2M}<\mu'_o<1/2 \} \cap B_1$, that $ \{ 1\le\zeta' \le \frac{2M-1}{2(M-1)}\}\cap 
\{ \frac 12 <\mu'_o< \frac{M+1}{2M} \} \subseteq 
\{ \frac 12 <\mu'_o< \frac{M+1}{2M} \} \cap R_0$, and that on $B_1$ we have that $h_1\le \sqrt{F_o+\Delta_1(\zeta',\mu'_o)}-\sqrt{F_o}$. Let us now study the terms in \eqref{II}. Notice that the term in the last line of \eqref{II} (a.s.) equals

$$\frac{c} M\left(\E\left[\zeta' I_{B_0} \| \F \right] + \E\left[\zeta' I_{R_0} \| \F \right]\right)\left(I_{\mu'_o\in \left( \frac{M-1}{2M},\frac 12 \right) } +I_{\mu'_o\in \left( \frac 12,\frac{M+1}{2M} \right) } \right),$$
while it follows from \eqref{ineqDelta0} and~\eqref{ineqDelta0prime} that
\begin{align*}
&\E\left[\left(\sqrt{F_o+\Delta_0(\zeta',\mu'_o)}-\sqrt{F_o} \right)\left(I_{B_0}+I_{1\leq\zeta'\leq\frac{2M-1}{2(M-1)}} \right)\| \F \right] I_{\mu'_o\in\left(\frac{1}{2},\frac{M+1}{2M}\right)}
\\
&\le \left( \E\left[-\frac{\zeta'}{2M^2}I_{B_0} \| \F \right]-\frac{1}{8M^2}\P\left( 1\leq\zeta'\leq\frac{2M-1}{2(M-1)} \right) \right) I_{\mu'_o\in\left(\frac{1}{2},\frac{M+1}{2M}\right)}.
\end{align*}
From~\eqref{ineqDelta1} it also follows that
\begin{align*}
\E\left[\left(\sqrt{F_o+\Delta_1(\zeta',\mu'_o)}-\sqrt{F_o} \right)I_{0<\zeta'<\frac 12} \| \F \right]I_{\mu'_o\in\left(\frac{M-1}{2M},\frac{1}{2}\right)} \le -\frac{1}{2M^2}\P\left(0<\zeta'<\frac 12\right)I_{\mu'_o\in\left(\frac{M-1}{2M},\frac{1}{2}\right)}.
\end{align*}
Furthermore, we note that $\E\left[\zeta'I_{B_0} \| \F \right]\le \P(B_0)$ and $\E\left[\zeta'I_{R_0} \| \F\right]\le \frac M{M-1}\P(R_0)$ for $\frac{M-1}{2M} <\mu_o'<\frac 12$ while $\E\left[\zeta'I_{R_0} \| \F\right]\le \frac{M+1}{M-1}\P(R_0)$ when $\mu_o'<\frac{M+1}{2M}$. We can now conclude
\begin{align*}
 &(II)\leq
 \left[
 -\frac{1}{2M^2}
 \P\left( 0<\zeta' <1/2 \right) +\frac{c} M\left(\P(B_0) + \frac M{M-1}\P(R_0) \right) \right]I_{\mu'_o\in\left(\frac{M-1}{2M},\frac{1}{2}\right)}
\\&+
 \E\left[\left(\frac{c\zeta'} M-\frac{\zeta'}{2M^2} \right)I_{B_0} \| \F \right]I_{\mu'_o\in\left(\frac{1}{2},\frac{M+1}{2M}\right)}
\\&+
\left(
-\frac{1}{8M^2}
\P\left( 1<\zeta' <\frac{2M-1}{2(M-1)}\right) +\frac{c} M \frac{M+1}{M-1}\P(R_0) \right)I_{\mu'_o\in\left(\frac{1}{2},\frac{M+1}{2M}\right)}
\\
&\leq
\left[\frac{c} M\left(C_1 + \frac M{M-1}C_2 \right) -\frac{1}{2M^2}\right]\,\P\left( 0<\zeta' <1/2 \right)
I_{\mu'_o\in\left(\frac{M-1}{2M},\frac{1}{2},\right)}
\\&+
 \left[C_3\frac{c} M\frac{M+1}{M-1} -\frac{1}{8M^2}\right]\, \P\left(1< \zeta' <\frac{2M-1}{2(M-1)} \right)I_{\mu'_o\in\left(\frac{1}{2},\frac{M+1}{2M}\right)}
\\&+
\E\left[\zeta'\left(\frac{c} M-\frac{1}{2M^2} \right)I_{B_0} \| \F \right]I_{\mu'_o\in\left(\frac{1}{2},\frac{M+1}{2M}\right)},
\end{align*}
where 
\begin{equation*}
\begin{array}{rclrcl}
C_1&=&\frac{\P\left( \zeta' \in \left(0 ,1\right) \right)}{\P\left( 0<\zeta' <1/2 \right)}\geq \frac{\P(B_0)}{\P\left( 0<\zeta' <1/2 \right)}, 
&
C_2&=&\frac{\P\left( \zeta' \in \left(1 ,2\right) \right)}{\P\left( 0<\zeta' <1/2 \right)}\geq \frac{\P(R_0)}{\P\left( 0<\zeta' <1/2 \right)}, 
\\[2mm]
C_3&=&\frac{\P\left( \zeta' \in \left(1 ,2\right) \right)}{\P\left(1< \zeta' <\frac{2M-1}{2(M-1)} \right)} \geq \frac{\P\left(R_0 \right)}{\P\left(1< \zeta' <\frac{2M-1}{2(M-1)}\right)}.
\end{array}
\end{equation*}
It follows from \eqref{conditionalrepeated} that these constants are all bounded above by some polynomial in $C$ whose power depends only on $M$; also note that $\zeta'\ge 0$ on $B_0\cap\{\frac{1}{2}\le\mu_0'\le \frac{M+1}{2M}\}$. Therefore it is obvious that we can pick $c$ small enough to make the first two terms in the last displayed inequality above non-positive, the last term is trivially non-positive, due to the fact that $\zeta'\geq0$ on $B_0$.

Now we will show that $(III)\le 0$. 
We begin by finding an upper bound for $\Delta_0(x,y)$ on the rectangle
$$
A_3=\left\{(x,y): \frac{M+1}{2M}\le y\le \frac{M-1} M, 1 \le x \le \frac M{M-1} \right\}.
$$
We already know it will be sufficient to study the boundary of this rectangle, since no extreme points lie inside. If $x=1$, then $\Delta_0=\frac{M-1} M-2y\frac{M+1}{2M} \le -\frac{2} M$. If $x=\frac M{M-1}$, then $\Delta_0=\frac M{M-1}-\frac{2M}{M-1}y\le -\frac{1}{M-1}$. If $y=\frac{M+1}{2M}$, then $\Delta_0=\frac{M-1} Mx^2-\frac{M+1} Mx\le -\frac{1} M$. If $y=\frac{M-1} M$, then $\Delta_0=\frac{M-1} M\left(x^2-2x\right)\le -\frac{2-M} M\le -\frac{1}{M-1}$. Hence $\Delta_0\le -\frac1M$ on $A_3$, and combining this with Claim~\ref{claimF} we obtain that, if $\frac{M+1}{2M}\leq\mu_o'\leq\frac{M-1} M$, then
\begin{align}\label{III}
\nonumber &(III)=\E\left[h_0I_{R_0} \| \F \right] \leq \E\left[\left(\sqrt{F_o+\Delta_0(\zeta',\mu'_o)}-\sqrt{F_o} \right)I_{1\leq\zeta'\leq\frac M{M-1}} \| \F \right]
+\frac{c} M \E\left[\zeta'I_{R_0} \| \F\right]\\ 
&\leq 
\E\left[ \left(\sqrt{F_o-\frac{1}{M-1}}-\sqrt{F_o} \right)I_{1\leq\zeta'\leq\frac M{M-1}} \| \F \right]
+\frac{c} M \E\left[2I_{R_0}\right],
\end{align}
where we used the fact that $\{\frac{M+1}{2M}<\mu_o'<\frac{M-1} M\}\cap \{1\leq\zeta'\leq\frac M{M-1}\}\subseteq \{\frac{M+1}{2M}<\mu_o'<\frac{M-1} M\}\cap R_0$ for the first term and that $\zeta'< 2$ on $R_0$ (since $\mu'_0 < \frac{M-1} M$) for the second term. If we apply Claim~\ref{claimF} to the first term in~\eqref{III}, and again apply the fact that $\zeta'< 2$ on $R_0$ for the second term, then we see that it is less or equal to
\begin{align*}
&\le \left(\sqrt{F_o-\frac{1}{M-1}}-\sqrt{F_o} \right)\P\left(\zeta'\in \left(1,\frac M{M-1} \right)\right)+2\frac{c} M\P\left( \zeta' \in \left(1,2 \right) \right)
\\ &\leq 
\left(
-\frac{1}{2M(M-1)}
 +2\frac{c} MC_4\right)\P\left(\zeta'\in \left(1,\frac M{M-1} \right)\right),
\end{align*}
where $C_4=\frac{\P(1<\zeta' <2)}{\P\left(1<\zeta'<\frac M{M-1} \right)}$, which again is by bounded above by some polynomial in $C$ according to \eqref{conditionalrepeated}. 
Therefore, it is clear that we can again pick $c$ small enough to make also this term non-positive, which proves that that $\E[\Delta h\| \F]\le 0$ and hence $h_k$ is a non-negative supermartingale.
\end{proof}

Now we continue with the proof of Theorem~\ref{t}. Fix $k$ and $a=L$, and let $c$ be defined by Lemma~\ref{superm}. If we denote by $h_\infty$ the a.s.\ limit of $h_c(\gamma_{k,t,L})$ as $t\to\infty$ on $\{\tau_{k,L}<\infty\}\cap\{\eta_{k,L}=\infty\}$, then
$$
h_\infty=\lim_{t\to\infty} \left(\sqrt{F(\tau_{k,L}+t)} +c\mu'(\tau_{k,L}+t)\right)I_{A_L}= \left(\sqrt{F_\infty} 
+\lim_{t\to\infty} c\mu'(t)\right)I_{A_L},
$$
that is $\lim_{t\to\infty} \mu'(t) \in \R$ on $A_L$, implying $\XX'(t)\not \to +\infty$. 
\\[5mm]

We will now prove the second statement of the theorem. Notice that we have just proved that $F(t)\to 0$ a.s., and hence $\pi_{1/n}<\infty$ a.s.\ for all $n>0$.
First, we will show that
\begin{align}\label{eq:liminfsetminus}
\P\left(
\left\{\liminf_{t\to\infty} x_{(1)}(t)>R_+\right\}
\cap \{\XX'(t) \text{ does not converge}\} 
\right)=0.
\end{align}
Indeed, let $E_n=\left\{ \liminf_{t\to\infty} x_{(1)}(t)\geq R_+ +\frac 1n \right\}$, then
$\left\{\liminf_{t\to\infty} x_{(1)}(t)>R_+\right\}=\bigcup_{n=1}^\infty E_n$
and it suffices to prove that
$\P\left(E_n\cap \{\XX'(t) \text{ does not converge}\} 
\right)=0$. Notice that 
$$
E_n\subseteq \bigcup_{k=1}^\infty \left(\{\eta_{k,1/n}=\infty\}\cap\{\tau_{k,1/n}<\infty\}\right)\subseteq\bigcup_{k=1}^\infty \{\lim_t \gamma_{k,t,1/n}=\infty\}.
$$
By Lemma \ref{superm} $h_{c}(\gamma_{k,t,1/n})$ has an a.s.\ limit for some $c>0$ on $\{\eta_{k,1/n}=\infty\}\cap\{\tau_{k,1/n}<\infty\}\cap A_L$, thus
$$
\P\left(A_L\cap\{\eta_{k,1/n}=\infty\}\cap\{\tau_{k,1/n}<\infty\}\cap \{\lim_{t\to\infty} \mu'(t)\text{ does not exist}\}\right)=0.
$$
Using continuity of probability again, applied to the sets $A_L$, $L\to\infty$, we can get rid of the term $A_L$ in the expression above.
Since $F(t)\to 0$ a.s., from the first part of the theorem, we have
$$
\{\lim_{t\to\infty} \mu'(t)\text{ exists}\}=\{\XX'(t) \to \phi\text{ for some }\phi\}, 
$$
except perhaps a set of measure zero, therefore
\begin{align*}
&\P\left(E_n\cap \{\lim \XX'(t)\text{ does not exist}\} 
\right) =
\P\left(E_n\cap\{\lim\mu'(t) \text{ does not exist}\} \right)
\\ &
\le
\P\left(\{\eta_{k,1/n}=\infty\}\cap\{\tau_{k,1/n}<\infty\}\cap \{\lim_{t\to\infty} \mu'(t)\text{ does not exist}\}\right)=0.
\end{align*} 
Noting that $E_n\subseteq E_{n+1}$, \eqref{eq:liminfsetminus} follows from continuity of probability; the proof of the respective statement for $\limsup$ is completely analogous, and they together are equivalent to the second statement of the theorem.
\\[5mm]

We now prove the last statement of the theorem.
Assume that $R_+<R_-$ in Assumption~\ref{Asu}. 
Let $u=\liminf_{t\to\infty} x_{(1)}(t)$, 
$v=\limsup_{t\to\infty} x_{(N-1)}(t)$. Consider the event $A_{a,b}=\{u<a\}\cap\{v>b\}$ for some $a<b$. If $b\leq R_-$ or $a\geq R_+$ we have already showed that we have convergence, so suppose that $b>R_-$ and $a<R_+$. We now make the observation that the interval $[R_+,R_-]$ is regular with parameters $\delta=\frac{2}{3}$, $r=\frac{1}{2C}$ (see Definition~\ref{Defreg} in the next Section) and in the event of $A_{a,b}$ we cross the interval $\left(a+\frac{b-a}{2},b-\frac{b-a}{2} \right)$ i.o., however, since this interval also inherits the regularity property, this would contradict Proposition~\ref{propreg} which states that a regular interval cannot be visited i.o. a.s. and so $P(A_{a,b})=0$. From this we can conclude that 
$$
\P\left( \left\{\XX'(t) \to \phi\text{ for some }\phi \right\}^c \right)=\P\left( \bigcup_{a,b \in \mathbb{Q}, a<R_+,b>R_-} A_{a,b} \right)=0,
$$
i.e. the core converges to a point a.s.
\end{proof}

\subsection*{Strengthening Theorem~\protect\ref{tmulti}.}
In case $d=1$ we can obtain stronger results than for the general case $\zeta\in\R^d$, $d\ge 1$.
For any interval $(a,b)\subset \R$ and any $\delta\in(0,1)$, let us define a $\delta$-{\em truncation} of $(a,b)$ as
$$
(a,b)_\delta=\left(a+\frac{\d}{2}(b-a),b-\frac{\d}{2}(b-a)\right).
$$

\begin{defin}\label{Defreg}
The interval $(a_1,b_1)$ is called
{\em regular}, if there are $\d,r\in(0,1)$ such that
for any $(a_2,b_2)\subseteq (a_1,b_1)$ we have
\bnn\label{eqdel}
\P(\zeta\in(a_2,b_2)_\delta \| \zeta\in (a_2,b_2))\ge r.
\enn
\end{defin}
\begin{rema}\label{Remarkanydelta}
We can iterate the inequality~(\ref{eqdel}) to establish that
$$
\P(\zeta\in(\dots(a_2,b_2)\underbrace{_\delta)\dots)_\delta}_{\text{$k$ times}} \| \zeta\in (a_2,b_2))\ge r^k, \quad k\ge 2
$$
and the iteration of~$\delta$-truncation eventually shrinks an interval to a point while $r^k$ is still $\in(0,1)$. Hence it is not hard to check that if Definition~\ref{Defreg} holds for some $\d\in(0,1)$ it holds for all $\delta$ in this interval.
\end{rema}

\begin{asu}
\label{Asu2}
Suppose that any interval $(a,b)$ such that $\P\left(\zeta\in (a,b)\right)>0$ contains a regular interval.
\end{asu}

\begin{rema}
The property above seems to hold for all common distribution; we were not able, in fact, to construct even a single counterexample, nor, unfortunately, to show that none exists.
\end{rema}

\begin{thm}\label{t2}
Under Assumptions~\ref{AsuN2K} and~\ref{Asu2}, $\XX'(t)\to\phi\in [-\infty,+\infty]$ a.s.
\end{thm}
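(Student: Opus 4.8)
The plan is to split, as in Corollary~\ref{csing} and Theorem~\ref{t}, along the dichotomy of Theorem~\ref{t1}: almost surely $F(t)\to 0$ or $\XX'(t)\to\infty$, and we show convergence of $\XX'(t)$ in $[-\infty,+\infty]$ on each of these events (to a finite point on the first, to $+\infty$ or $-\infty$ on the second). The main new ingredient for the first event is Proposition~\ref{propreg} --- a regular interval cannot be crossed by the core infinitely often --- combined with Assumption~\ref{Asu2}.

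Consider first $\{\XX'(t)\to\infty\}$. By the definition of this event $m^*(t):=\min\{|x|:x\in\XX'(t)\}\to\infty$, so for all large $t$ every core point has $|x|>1$; call such $t$ \emph{positive} if $\XX'(t)\subset(1,\infty)$, \emph{negative} if $\XX'(t)\subset(-\infty,-1)$, and \emph{mixed} otherwise. Since $2K<N$, the new core always contains a point of the old core, so a positive time cannot be immediately succeeded by a negative one or conversely; thus every change of sign passes through a mixed time. At a mixed time the core contains a point $\ge m^*(t)$ and a point $\le -m^*(t)$, hence $D(t)\ge 2m^*(t)$ and, by Lemma~\ref{lem2}, $F(t)\ge \frac12 D(t)^2\ge 2m^*(t)^2$. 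Since $F$ is non-increasing (Lemma~\ref{lem1}) while $m^*(t)\to\infty$, there are only finitely many mixed times; consequently all sufficiently large $t$ share one sign, and $\XX'(t)\to+\infty$ or $\XX'(t)\to-\infty$.

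Now consider $\{F(t)\to 0\}$. By \eqref{eqDF} $D(t)\to 0$, and tracking a surviving core point together with \eqref{eqDF}--\eqref{eqDFD} gives $|m(t+1)-m(t)|\le\sqrt{2F(t)}\to 0$, where $m(t)=\min\XX'(t)$; since $\max\XX'(t)=m(t)+D(t)$ in $d=1$, convergence of $\XX'(t)$ in $[-\infty,+\infty]$ is equivalent to convergence of $m(t)$ there. If $m(t)$ does not converge then $\liminf_t m(t)<\limsup_t m(t)$, and examining the cases in which each of these is finite or $\pm\infty$ one finds rationals $a<b$ with $m(t)<a$ infinitely often and $m(t)>b$ infinitely often; denote this event by $E_{a,b}$, so it suffices to show $\P(E_{a,b}\cap\{F(t)\to 0\})=0$ for each such pair. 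On $E_{a,b}\cap\{F(t)\to 0\}$ the increments of $m$ vanish, so every $c\in[a,b]$ is a subsequential limit of $m(t)$; as $m(t)\in\supp\zeta$ and $\supp\zeta$ is closed, $c\in\supp\zeta$, which forces $(a,b)\subseteq\supp\zeta$ (or else $E_{a,b}=\emptyset$) and hence $\P(\zeta\in(a,b))>0$. By Assumption~\ref{Asu2}, $(a,b)$ then contains a regular interval, which after a slight shrinking (sub-intervals of regular intervals are regular) may be taken to be $(a',b')$ with $a<a'<b'<b$. Since $D(t)\to 0$, the relation $m(t)<a$ i.o.\ gives $\max\XX'(t)<a'$ i.o., and $m(t)>b$ i.o.\ gives $\min\XX'(t)>b'$ i.o.; thus the core crosses the regular interval $(a',b')$ infinitely often, contradicting Proposition~\ref{propreg}. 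Therefore $\P(E_{a,b}\cap\{F(t)\to 0\})=0$ and the core converges a.s.\ on $\{F(t)\to 0\}$, completing the proof.

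The step I expect to require the most care is the reduction, on $\{F(t)\to 0\}$, of non-convergence of the core to ``the core crosses a regular interval infinitely often'': one must treat uniformly the cases where $\liminf$ or $\limsup$ of the core position is $\pm\infty$, and must check that the interval of oscillation lies inside $\supp\zeta$, so that it has positive $\zeta$-mass and Assumption~\ref{Asu2} supplies a regular sub-interval. The treatment of $\{\XX'(t)\to\infty\}$, showing it really means convergence to $+\infty$ or $-\infty$ rather than a mixed-sign oscillation, is shorter but hinges on the observation that a mixed configuration makes $F$ large, in conflict with the monotonicity of $F$.
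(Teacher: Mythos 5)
Your proof is correct and follows the same overall route as the paper: reduce to Proposition~\ref{propreg} (regular intervals are not crossed infinitely often) via the dichotomy of Theorem~\ref{t1}. The paper compresses this reduction into one sentence, and your write-up makes explicit two details that sentence leaves implicit. First, on $\{\XX'(t)\to\infty\}$ you rule out oscillation between far-positive and far-negative configurations by noting that a mixed configuration has $D(t)\ge 2m^*(t)$, hence $F(t)\ge 2m^*(t)^2$, which is incompatible with the monotone boundedness of $F$ once $m^*(t)\to\infty$; this is exactly the justification needed for the paper's tacit step that $\{\XX'(t)\not\to\pm\infty\}$ entails $\XX'(t)\not\to\infty$ (and therefore $D(t)\to 0$ by Theorem~\ref{t1}). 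Second, before invoking Assumption~\ref{Asu2} you verify that the interval being crossed actually has positive $\zeta$-measure, via vanishing increments of $m(t)$ and closedness of $\supp\zeta$ forcing $[a,b]\subseteq\supp\zeta$; the paper's phrase ``every interval contains some regular interval'' silently skips this precondition. These are small but genuine gap-fillings rather than a different method.
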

The proof of this theorem immediately follows from the next proposition, since, if $\{\XX'(t)\not\to \pm\infty\}=\{\mu'(t)\not\to \pm\infty\}$ occurs, then $\mu'(t)$ either converges to a finite number or crosses some interval infinitely often. However, every interval contains some regular interval by Assumption~\ref{Asu2} and by Theorem \ref{t1} $D(t)\to 0$ a.s. if $\mu'(t)\not\to \pm\infty$, so the core must converge in this case.

\begin{prop}\label{propreg}
For any $a,b$ such that $a<b$, with probability one $\mu'(t)$ cannot cross the interval $(a,b)$ infinitely many times.
\end{prop}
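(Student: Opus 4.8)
The plan is to argue by contradiction, closely following the scheme of the proof of Theorem~\ref{tmulti}. Suppose the event $E$ that $\mu'(t)$ crosses $(a,b)$ infinitely often (that is, $\mu'(t)<a$ infinitely often and $\mu'(t)>b$ infinitely often) has positive probability. First I would reduce to the case $F(t)\to0$ on $E$: by Lemma~\ref{lem1} the sequence $F(t)$ is non-increasing, so $D(t)\le\sqrt{2F(0)}$ for all $t$ by~\eqref{eqDF}; since $2K<N$ (Assumption~\ref{AsuN2K}) the new core always retains a point of the old one, so the core moves by at most $2\sqrt{2F(0)}$ per step, and a configuration of uniformly bounded diameter drifting by uniformly bounded steps cannot have $\min\{|x|:x\in\XX'(t)\}\to\infty$ without all of it escaping to $+\infty$ or to $-\infty$, in which case $\mu'(t)\to\pm\infty$ and no crossing occurs; hence $E\subseteq\{\XX'(t)\not\to\infty\}$ and Theorem~\ref{t1} gives $F(t)\to0$ a.s.\ on $E$. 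Then $D(t)\to0$ and $\dist(\XX'(t),\XX'(t+1))\to0$ on $E$ (cf.~\eqref{eqdistchi}), so on $E$ every late crossing proceeds by arbitrarily small steps, and during such a crossing the whole core lies in $(c-\eta,c+\eta)$ for every $c\in(a,b)$ at some intermediate time. Since core points always belong to $\supp\zeta$, this forces $\P(\zeta\in(a,b))>0$, so Assumption~\ref{Asu2} yields a regular interval $(a_1,b_1)\subseteq(a,b)$; writing $m_1$ for its midpoint, we conclude that on $E$ the core lies in $(m_1-\eta,m_1+\eta)$ with $F$ arbitrarily small, infinitely often, for every $\eta>0$.

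The core of the argument is a local decay estimate replacing Lemma~\ref{LyapDecreaseMulti} under the weaker Assumption~\ref{Asu2}: there should exist $\alpha\in(0,1)$, $\sigma>0$ depending only on $N$, $K$ and the regularity parameters of $(a_1,b_1)$, and a threshold $\eps_0>0$, such that whenever the $N-K$ core points all lie in $(a_1,b_1)_{1/2}$ and $F$ of the core is at most $\eps_0$, then, conditionally on the core changing at the next step, $F$ decreases by a factor at most $\alpha$ with probability at least $\sigma$. I would prove this along the lines of Lemma~\ref{LyapDecreaseMulti}, with one essential change: regularity is not translation invariant, so one cannot normalise $\sum x_i=0$. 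Instead take $x_1$ to be the extreme point of the core and $x_j$ a core point with $\n x_1-x_j\n\ge D/2$; since $x_j$ lies well inside $(a_1,b_1)$, the interval $(x_j-M,x_j+M)$, with $M$ a fixed multiple of $\sqrt{F}$ chosen large enough for Lemma~\ref{lem3D} to force rejection outside it and, using $F\le\eps_0$, small enough to stay in $(a_1,b_1)$, is contained in $(a_1,b_1)$; then $(x_j-M,x_j+M)_\delta=(x_j-\rho,x_j+\rho)$ for a fixed ratio $\rho/M$, and regularity gives $\P(\zeta\in(x_j-\rho,x_j+\rho)\mid\zeta\in(x_j-M,x_j+M))\ge r$ with $r$ independent of $F$; this scale invariance is what replaces the appeal to Proposition~\ref{prop1} in the compact case. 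A new point landing in $(x_j-\rho,x_j+\rho)$ is forced into the core and shrinks $F$ by the fixed factor $\alpha=1-\tfrac1{4(N-1)^2}$ exactly as in Lemma~\ref{LyapDecreaseMulti}; conditioning on the core changing (equivalently, on the new point being accepted, which entails that it lies within $O(M)$ of $x_j$) converts the conditional probability $r$ into the claimed $\sigma$. The reduction from $K=1$ to general $K$ is the one used at the end of the proof of Lemma~\ref{lem3}.

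With this estimate in hand I would run the trapping construction of Theorem~\ref{tmulti}. Fix $M$ large so that $\alpha^{\sigma M}\le\frac1{16}$; on $E$ let $\tau_0$ be a time at which the core lies in a tiny neighbourhood of $m_1$ well inside $(a_1,b_1)_{1/2}$, with $F(\tau_0)$ below a threshold of order $(b_1-a_1)^2/(M^2 4^M)$, which is finite on $E$ by the first paragraph. Let $\tau_1<\tau_2<\dots$ be the successive times the core changes, and define events $A'_m,A''_m,A_m$ as in~\eqref{eqAAA}, mutatis mutandis, now asserting that the core lies in $(a_1,b_1)_{1/2}$ and drifts only within a geometrically small neighbourhood of $m_1$. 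Between $\tau_{(m+M)^2}$ and $\tau_{(m+M+1)^2}$ the core undergoes $2(m+M)+1$ changes; on $A_m$ it stays in $(a_1,b_1)_{1/2}$, so each change shrinks $F$ by the factor $\alpha$ with probability at least $\sigma$ regardless of the past, Hoeffding's inequality gives $\P(A_{m+1}\mid A_m)\ge1-e^{-\sigma^2(m+M)}$, and the drift bound $\sum_t D(t)\le[2(m+M)+1]\sqrt{2F(\tau_{(m+M)^2})}$ keeps the core in the prescribed neighbourhood. Hence, with $\bar A_M=\bigcap_m A_m$, we get $\P(\bar A_M)\ge\P(\{\tau_0<\infty\})(1-\sigma^{-2}e^{-\sigma^2 M})$; on $\bar A_M$ the core never leaves $(a_1,b_1)\subset(a,b)$ after $\tau_0$, so $\mu'$ stays in $(a,b)$ and no further crossing occurs, whence $E\cap\bar A_M=\emptyset$. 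Since $E\subseteq\{\tau_0<\infty\}$, this gives $\P(E)\le\sigma^{-2}e^{-\sigma^2 M}$, and letting $M\to\infty$ yields $\P(E)=0$.

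The genuinely delicate point is the local decay estimate of the second paragraph. Under Assumption~\ref{Asumulti} every support point is the centre of its own regular ball, whereas Assumption~\ref{Asu2} only guarantees a regular interval somewhere inside any interval of positive $\zeta$-mass, and the core point $x_j$ one must target for the decrease need not be near the midpoint of $(a_1,b_1)$; ensuring that the acceptance window $(x_j-M,x_j+M)$ genuinely lies inside $(a_1,b_1)$, which is what forces the restriction to $(a_1,b_1)_{1/2}$ and the threshold $\eps_0$, and that conditioning on the core changing legitimately upgrades the scale-invariant conditional bound $r$ to a uniform $\sigma$, is where care is needed. The remaining ingredients, namely the reduction to $F(t)\to0$, the drift bookkeeping and the Hoeffding step, are faithful transcriptions of the proofs of Theorems~\ref{t1} and~\ref{tmulti}.
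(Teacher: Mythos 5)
Your proposal is correct and follows essentially the same route as the paper: locate a regular interval $(a_1,b_1)\subseteq(a,b)$ via Assumption~\ref{Asu2}, then rerun the trapping construction of Theorem~\ref{tmulti} with modified events $A''_m$ keeping the core in a shrinking neighbourhood away from the endpoints, using the $F$-decay lemma at every core change and closing with Hoeffding. In fact you are more explicit than the published argument on two points it glosses over: you verify $\P(\zeta\in(a,b))>0$ before invoking Assumption~\ref{Asu2}, and you re-derive the analogue of Lemma~\ref{LyapDecreaseMulti} from interval regularity (Definition~\ref{Defreg}) rather than ball regularity, checking that the acceptance window $(x_j-M,x_j+M)$ stays inside $(a_1,b_1)$ — whereas the paper simply asserts ``Since $G$ is regular, Lemma~\ref{LyapDecreaseMulti} can still be applied.''
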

\begin{proof}
Suppose the contrary. From Assumption~\ref{Asu2} it follows that $(a,b)$ contains some regular interval, say $(a_1,b_1)$ which also must be crossed infinitely often.
Now the rest of the proof is almost the same as that of Theorem~\ref{tmulti} so we will only highlight the differences, which lie in how Assumption~\ref{Asu2} is used (in place of the stronger Assumption~\ref{Asumulti}) when we define our ``absorbing" region $G$. Here we let $G=(a_1,b_1)$ and assume w.l.o.g.\ that $a_1=0,b_1=R$. Let $\zeta(t)$ and $M$ satisfy the conditions of Theorem~\ref{tmulti} and define $\tau_0=\tau_0^{(M)}$ such that
$$
\XX'(\tau_0)\subseteq \left[\frac14 R,\frac34 R \right],\quad 
F(\tau_0)\le \frac{R^2}{M^2\, 4^M}.
$$
Let us define the events $A'_m,A''_m,A_m$ for $m=1,2,\dots$ as in~\eqref{eqAAA} with the only change that
\begin{align*}
A''_m&=A''_{m,M}=\left\{\XX'(\tau_{(m+M)^2})\subseteq 
\left(2^{-(m+2)}R,\left[1-2^{-(m+2)}\right]R \right) \right\}.
\end{align*}
Since $G$ is regular, Lemma~\ref{LyapDecreaseMulti} can still be applied. The rest of the proof is identical to that of Theorem~\ref{tmulti}.
\end{proof}

\begin{corollary}\label{c1}
Suppose that $\supp\zeta$ is bounded. Hence, under Assumptions~\ref{AsuN2K} and~\ref{Asu2} we have $\XX'(t)\to\phi\in\R$ a.s.
\end{corollary}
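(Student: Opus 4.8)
The plan is to obtain this as a quick consequence of Theorem~\ref{t2}, by showing that boundedness of $\supp\zeta$ forces the core to stay confined, so that the only alternative left open by that theorem — convergence to $\pm\infty$ — is ruled out.

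First I would record that $\supp\zeta$, being bounded and (directly from its definition) closed, is compact, hence contained in some interval $[-\rho,\rho]$. Next comes the confinement observation: by the standing requirement on the process all points of $\XX(0)$ lie in $\supp\zeta$, and for every $t\ge 1$ the freshly sampled points $\zeta_{t;1},\dots,\zeta_{t;K}$ lie in $\supp\zeta$ almost surely (the complement of $\supp\zeta$ is open and $\zeta$-null, being a countable union of $\zeta$-null open balls). Since $\XX(t+1)=\XX'(t)\cup\{\zeta_{t+1;1},\dots,\zeta_{t+1;K}\}$ and $\XX'(t)\subseteq\XX(t)$, a trivial induction gives that, almost surely, every point of $\XX(t)$ — and in particular every point of the core $\XX'(t)$ — lies in $\supp\zeta\subseteq[-\rho,\rho]$ for all $t$. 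Consequently $\min\{x:x\in\XX'(t)\}$ and $\max\{x:x\in\XX'(t)\}$ remain bounded, so neither $\XX'(t)\to+\infty$ nor $\XX'(t)\to-\infty$ can occur.

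Finally I would invoke Theorem~\ref{t2}: under Assumptions~\ref{AsuN2K} and~\ref{Asu2} it gives $\XX'(t)\to\phi$ almost surely for some $\phi\in[-\infty,+\infty]$. Intersecting with the previous step, the events $\{\phi=+\infty\}$ and $\{\phi=-\infty\}$ have probability zero, hence $\phi\in\R$ almost surely (indeed $\phi\in\supp\zeta$, as $\supp\zeta$ is closed). There is essentially no obstacle here: all the substance is already in Theorem~\ref{t2}, and what remains is the elementary a.s.\ confinement argument; the only point worth stating carefully is the measure-theoretic fact that a $\zeta$-sample lands in $\supp\zeta$ almost surely, used above.
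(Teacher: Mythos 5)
Your proof is correct and is essentially the argument the paper leaves implicit: the corollary is stated without proof as an immediate consequence of Theorem~\ref{t2}, and your confinement observation (all points of $\XX(t)$ remain in the compact set $\supp\zeta$ almost surely, by induction on $t$ and the fact that $\zeta$-samples land in $\supp\zeta$ a.s.) is precisely what rules out the $\phi=\pm\infty$ alternative left open by that theorem.
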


\begin{corollary}\label{c2}
Suppose that $K=1$ and that Assumption~\ref{Asu2} is valid in some interval $[a,b]$ and that in addition Assumption \ref{Asu} is valid for some $R_-\ge a$ and $R_+\le b$. Then $\XX'(t)\to\phi\in\R$ a.s.
\end{corollary}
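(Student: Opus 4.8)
The plan is to combine the tail control furnished by Theorem~\ref{t} with the absorbing--regular--interval machinery behind Theorem~\ref{t2} and Proposition~\ref{propreg}. If $R_+<R_-$ the statement is already contained in the last part of Theorem~\ref{t}, so assume $R_+\ge R_-$. Since $K=1$ and Assumption~\ref{Asu} holds, Theorem~\ref{t} gives $\XX'(t)\not\to\infty$ and $F(t)\to0$ a.s.; hence $D(t)\to0$, the core collapses to a single trajectory, and $\liminf_t$ (resp.\ $\limsup_t$) of $x_{(1)}(t)$, of $x_{(N-1)}(t)$ and of $\mu'(t)$ all coincide, since $x_{(1)}(t)\le\mu'(t)\le x_{(N-1)}(t)$ and $x_{(N-1)}(t)-x_{(1)}(t)=D(t)\to0$. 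Moreover, off a null set, on $B=\{\XX'(t)\text{ does not converge}\}$ Theorem~\ref{t} forces $\liminf_t\mu'(t)\le R_+$ and $\limsup_t\mu'(t)\ge R_-$. It remains to show $\P(B)=0$: together with $\XX'(t)\not\to\pm\infty$ this yields convergence of $\XX'(t)$ to a finite $\phi$.

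Fix a realisation in $B$ and put $u=\liminf_t\mu'(t)<v=\limsup_t\mu'(t)$, so $u\le R_+$, $v\ge R_-$. Since $F(t)\to0$, \eqref{eqdistchi} shows that $\mu'$ moves in steps tending to $0$, while $\mu'(t)<u'$ i.o.\ and $\mu'(t)>v'$ i.o.\ whenever $u<u'<v'<v$; hence $\mu'$ crosses every such interval $(u',v')$ infinitely often. First I would verify that $(u,v)\subseteq\supp\zeta$: if some $(c,d)\subset(u,v)$ were disjoint from $\supp\zeta$, then once $D$ is small no freshly sampled point can be accepted into the core while the core sits near $[c,d]$ (Lemma~\ref{lem3D}), so the core cannot step across the gap $(c,d)$, contradicting the infinitely--often crossing — this is exactly the mechanism of Corollary~\ref{csing}. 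Consequently every nonempty open subinterval of $(u,v)$ has positive $\zeta$--measure.

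The generic case is then settled as follows. From $u\le R_+\le b$ and $v\ge R_-\ge a$ one checks $\max(u,a)\le\min(v,b)$, and when this inequality is strict the set $(u,v)\cap(a,b)$ is a nonempty open subinterval of $[a,b]$, hence of positive $\zeta$--measure by the previous step. By Assumption~\ref{Asu2} it therefore contains a regular interval $I\subseteq(u,v)$, which $\mu'(t)$ crosses infinitely often; this contradicts Proposition~\ref{propreg}, whose proof uses nothing about the absorbing set $G$ beyond its regularity and so applies verbatim with $G=I$. Since this argument runs off a null set for every admissible $u',v'$, we obtain $\P(B)=0$ in this case.

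The main obstacle is the degenerate case $\max(u,a)=\min(v,b)$, which (since $u<v$) forces either $v=R_-=a$ or $u=R_+=b$: the oscillation interval of the core is then pinned against an endpoint of $[a,b]$ and lies entirely inside a tail region $(-\infty,R_-)$ or $(R_+,\infty)$, where Assumption~\ref{Asu2} says nothing and the one--sided form of Assumption~\ref{Asu} available there does not on its own exhibit a regular interval. The plan for this knife--edge event is to show it has probability zero by re--running the supermartingale construction of Lemma~\ref{superm} (and its left--hand mirror): on $\{v=R_-=a\}$ the core eventually stays below $R_-+\varepsilon$ for every $\varepsilon>0$, so the return times $\eta$ in that construction are eventually $+\infty$, whence the reflected supermartingale $\sqrt{F(t)}+c[\mu'(t)+\max(0,-R_+)]$ converges, forcing $\mu'(t)$ to converge and contradicting membership in $B$; the case $\{u=R_+=b\}$ is symmetric. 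I expect this boundary analysis, rather than the generic argument, to be the technically delicate point.
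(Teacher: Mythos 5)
Your overall strategy matches the paper's: use Theorem~\ref{t} to confine the non-convergence event to (a null-set modification of) $\{u\le R_+,\,v\ge R_-,\,u<v\}$, then derive a contradiction from Proposition~\ref{propreg} by exhibiting a regular interval that $\mu'$ crosses infinitely often. You are more explicit than the paper in two places: (a) you verify that the candidate crossing interval has positive $\zeta$-measure before invoking Assumption~\ref{Asu2} (needed, and only implicit in the paper), and (b) you correctly isolate the boundary cases $u=R_+=b$ and $v=R_-=a$, where $(u,v)\cap(a,b)$ has empty interior. The paper's decomposition $\bigcup_{c,d\in\mathbb{Q},\,a<c<d<b}A_{c,d}$ does not cover those events, so your concern is a real one; the paper's parenthetical ``if $d<R_-$ or $c>R_+$\dots'' does not dispose of them either.

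However, your proposed treatment of the boundary cases does not work. The supermartingale of Lemma~\ref{superm} (and its left-hand mirror) only yields convergence of $\mu'$ on events where some excursion-return time is eventually $+\infty$: one needs the core to eventually remain strictly inside the relevant tail, so that the stopped process $h_c(\gamma_{k,t,\cdot})$ is the unstopped one. But on $\{v=R_-=a,\ u<v\}$ one has $\limsup_t x_{(N-1)}(t)=R_-$ exactly, hence $x_{(N-1)}(t)>R_--a'$ infinitely often for every $a'>0$; so every left-tail return time $\eta^-_{k,a'}$ is finite, the supermartingale is frozen at a finite time on each excursion, and no conclusion about $\lim_t\mu'(t)$ follows. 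Your sentence ``the core eventually stays below $R_-+\varepsilon$ \ldots so the return times $\eta$ in that construction are eventually $+\infty$'' conflates ``below $R_-+\varepsilon$'' with ``below $R_--a'$'': the former is what you actually have, the latter is what Lemma~\ref{superm} would require. (There is also a mislabelling: the displayed function $\sqrt{F(t)}+c[\mu'(t)+\max(0,-R_+)]$ is the right-tail object, not its reflection.) So the knife-edge event is not handled by your argument, and the gap you identified remains open --- a gap which, as far as I can tell, is also present in the paper's own proof.
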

\begin{proof}
Let $u=\liminf_{t\to\infty} x_{(1)}(t)$, $v=\limsup_{t\to\infty} x_{(N-1)}(t)$. Consider the event $$A_{c,d}=\{u\le c\}\cap\{v\ge d\}$$ for some $c<d$. If $d< R_-$ or $c> R_+$ we already know from Theorem~\ref{t} that we have convergence, so suppose that both $c,d \in [a,b]$. In this case the interval $\left(c+\frac{d-c}{2},d-\frac{d-c}{2} \right)\subset [c,d]$ is visited i.o. but, since this interval inherits the property of Assumption~\ref{Asu2}, it follows from Proposition~\ref{propreg} that $\P(A_{c,d})=0$. Therefore,
$$
\P\left( \XX'(t) \text{ does not converge} \right)=\P\left( \bigcup_{c,d \in \mathbb{Q}, d<b,c>a} A_{c,d} \right)=0,
$$
i.e., the core converges to a point a.s.
\end{proof}


\begin{thebibliography}{99}
\bibitem{AS}
Sandemose, A. (1936). A fugitive crosses his tracks. translated by Eugene Gay-Tifft. New York: A.\ A.\ Knopf.

\bibitem{BGT}
Bingham, N. H.; Goldie, C. M.; Teugels, J. L. Regular variation. Encyclopedia of Mathematics and its Applications, 27. Cambridge University Press, Cambridge, (1987).

\bibitem{GVW} Grinfeld, M., Volkov, S., Wade, A.~R. Convergence in a multidimensional randomized Keynesian beauty contest. Adv.\ in Appl.\ Probab.\ 47 (2015), no.~1, 57--82. 

\bibitem{Hoef}
Hoeffding, W. Probability Inequalities for Sums of Bounded Random Variables. Journal of the American Statistical Association~{\bf 58}, (1963), 13--30.

\bibitem{Par}
Parthasarathy, K.~R.~(2005). Probability measures on metric spaces. AMS Chelsea Publishing, Providence, RI.

\end {thebibliography}
\end{document}